\spnewtheorem{remark*}{Remark}{\bfseries}{\itseries}
\spnewtheorem*{xproof}{}{\itshape}{\rmfamily}
\renewenvironment{proof}[1][\proofname]
 {\xproof}
 {\endxproof}
\newcommand{\R}{\mathbb{R}}
\newcommand{\N}{\mathbb{N}}
\DeclareMathOperator{\dist}{dist}
\DeclareMathOperator{\pdist}{\mathbf{dist}}
\DeclareMathOperator{\grid}{\Gamma}
\DeclareMathOperator{\neighbor}{\mathcal N}
\newcommand{\tT}{\mathrm{T}}
\newcommand{\dx}{\,\mathrm{d}}
\newcommand{\TGV}{\operatorname{TGV}}
\newcommand{\TV}{\operatorname{TV}}
\newcommand{\tv}{\operatorname{tv}}
\newcommand{\IC}{\operatorname{IC}}
\newcommand{\zb}[1]{\mathbf{#1}}
\newcommand{\SPD}[1]{\mathcal P(#1)}
\newcommand{\Sym}[1]{\operatorname{Sym}}
\newcommandx{\abs}[2][1=\@empty]{#1\lvert #2 #1\rvert}
\newcommandx{\norm}[3][1=\@empty,3=\@empty]{#1\lVert #2 #1\rVert_{#3}}
\DeclareMathOperator{\distM}{dist}
\DeclareMathOperator{\grad}{grad}
\newcommand{\geodesic}[1]{\gamma_{\overset{\frown}{#1}}}
\newcommand{\MM}{\ensuremath{\mathcal{M}}}
\newcommand{\geoPV}[1]{\ensuremath{\gamma_{{#1}}}}
\newcommand{\RR}{\ensuremath{\mathbb{R}}}
\newcommand{\dotgeoPV}[1]{\ensuremath{\dot\gamma_{{#1}}}}
\newcommand{\geo}[1]{\ensuremath{\gamma_{\overset{\frown}{#1}}}} 
\title{Priors with Coupled First and Second Order Differences for  Manifold-Valued Image Processing}
\titlerunning{Coupled First and Second Order Differences}
\author{Ronny Bergmann \and Jan Henrik Fitschen \and Johannes Persch \and Gabriele Steidl}
\authorrunning{R.~Bergmann, J.~H.~Fitschen, J.~Persch, G.~Steidl}
\institute{%
Ronny Bergmann \and Jan Henrik Fitschen \and Johannes Persch \and Gabriele Steidl \at Departement of Mathematics,
Technische Universität Kaiserslautern,
Paul-Ehrlich-Str.~31, 67663 Kaiserslautern, Germany
\\
\email{bergmann@mathematik.uni-kl.de},\\
\email{fitschen@mathematik.uni-kl.de},\\
\email{persch@mathematik.uni-kl.de},\\
\email{steidl@mathematik.uni-kl.de}.
\and
Gabriele Steidl \at Fraunhofer ITWM, Kaiserslautern, Germany
\\\email{steidl@mathematik.uni-kl.de.}%
}
\date{\today}
\begin{document}
\maketitle
\begin{abstract}
We generalize discrete variational models 
involving the infimal convolution (IC) of first and second order differences
and the total generalized variation (TGV)
to manifold-valued images. We propose both 
extrinsic and intrinsic approaches.
The extrinsic models are based on embedding the manifold into
an Euclidean space of higher dimension with ma\-ni\-fold constraints. 
An alternating direction methods of multipliers
can be employed for finding the minimizers. 
However, the components
within the extrinsic IC or TGV decompositions
live in the embedding space  which makes their
interpretation difficult.
Therefore we investigate two intrinsic appro\-aches:
for Lie groups, we employ the group action within the models;
for more general manifolds 
our IC model is based on recently developed absolute second order differences on manifolds,
while our TGV approach uses an approximation of the parallel transport by the pole ladder.
For computing the minimizers of the intrinsic mo\-dels we
apply gradient descent algorithms.
Numerical examples demonstrate  that our approaches work well for certain
manifolds.
\end{abstract}

\keywords{Infimal convolution, total generalized variation, higher order differences, manifold-valued images,  optimization on manifolds}
\subclass{\\49M15, 49M25,49Q20,68U10,56Y99}
%
%
\section{Introduction}

Variational models of the form
  \begin{equation}\label{general_model}
    \mathcal E(u) = \mathcal E_{\mathrm{data}} (u;f) + \alpha \mathcal E_{\mathrm{prior}}(u),
    \qquad \alpha > 0,
  \end{equation}
where $f$ is the given data set, \(\mathcal E_{\mathrm{data}} \) the data fitting
  term and \(\mathcal E_{\mathrm{prior}}\) the prior also known as regularization term,
  were applied for various tasks in image processing.
    In this paper, we restrict our attention to least squares data fitting terms.
    
  Starting with methods having first order derivatives in their prior like the
  total variation (TV)~\cite{ROF92},
	higher order derivatives were
  incorporated into the prior to cope with the staircasing effect
  caused by the TV regularization and to better adapt to specific applications.
  Besides additive coupling of higher order
  derivatives, see, e.g.,~\cite{PS13}, their infimal convolution
  (IC)~\cite{CL97} or the total generalized variation (TGV)~\cite{BKP10} were
  proposed in the literature.
  In many applications such as image denoising 
   IC~\cite{SS08,SST11}
   or TGV~\cite{BH14,BKP10,BV11} show better
  results than just the additive coupling.
  For discrete TGV versions we refer to \cite{SS08,SST11}.
  A preconditioned Douglas--Rachford algorithm can be used to efficiently compute the minimizer of the $\TGV$ penalized problem~\cite{BH15}. 
  An extension of $\TGV$ to vector-valued images with applications in color image restoration was given in~\cite{Bre14}.
 In~\cite{BBEFSS18,BEFSS15}, IC, resp. TGV, of motion  fields were
 successfully applied to strain analysis, in particular for the early detection of cracks
 in materials during tensile tests. 
  
 With the emerging possibilities to capture different modalities of data, image
  processing methods are transferred to the case where the measurements (pixels)
  take values on Riemannian manifolds. Examples are Interferometric Synthetic
  Aperture Radar (InSAR)~\cite{BH98,BRF00}
  with values on the circle \(\mathbb S^1\), directional data on the
  2-sphere~\(\mathbb S^2\),
  electron backscatter diffraction (EBSD)~\cite{BHJPSW10,GA10}
  with data on quotient manifolds of $\operatorname{SO}(3)$
  or diffusion tensor magnetic resonance imaging (DT-MRI)~\cite{FJ07},
  where each measurement is a symmetric positive definite \(3\times 3\) matrix.
	These are rather simple manifolds for which explicit expressions
	of their geodesic distance and exponential map are known.

  Recently, the discrete TV model has been generalized to
  Riemannian manifolds in an intrinsic way \cite{LSKC13,SC11,WDS14}.
	Note that finding a global minimizer of the optimization problem is 
	NP hard already for the case of the circle \(\mathcal M = \mathbb S^1\)
	\cite{CS13,Fi17}.
  In~\cite{BBSW16,BW15,BW16}, the model was extended to include
  second order differences, where the coupling of the first and second order differences
	was only realized in an additive manner. The approach is based on
	a proper generalization of absolute values of second order differences
	to the manifold-valued setting.
	For the special case of DT-MRI, i.e., symmetric positive definite matrices of
  size \(3\times 3\), another approach using tensor calculus resulting in the
  Frobenius norm instead of a distance on the Riemannian manifold
  was investigated in~\cite{SSPB07} and extended to a
  TGV approach in~\cite{VBK13}. 
  Numerical analysis for $\mathbb S^2$-valued functions was also established in \cite{Alouges97}.
  
  In this paper, we generalize discrete variational models with
  least squares data term and IC, resp. TGV prior to the manifold-valued setting.
  We derive extrinsic and intrinsic approaches.
  The extrinsic models which generalize the first order model in \cite{RTKB14,RWTKB14} 
  have the drawback that the decomposition components of IC and TGV 
  live in the higher dimensional embedding space which makes their interpretation
  difficult. Therefore we propose two intrinsic approaches.
  For Lie groups as  the circle $\mathbb S^1$ 
  or the special orthogonal group~\(\operatorname{SO}(3)\),
  we incorporate the group operation within the IC and TGV models which lead to
  decompositions within the manifold.
  For more general Riemannian manifolds, our so-called Midpoint IC approach relies
  on the generalization of the absolute value of the second order
  difference by the distance of its center point from a geodesic joining the
  two other points~\cite{BBSW16}. 
  Our TGV model is based on the approximation of the parallel transport by the pole ladder \cite{LP14}.
	Note that the pole ladder mimics the parallel transport exactly for symmetric Riemannian manifolds
	all our numerical examples belong to.
	It leads to a decomposition within the tangent bundle of the manifold.
  We acknow\-ledge, that in parallel to our work an axiomatic TGV model 
	for manifold-valued images was developed by Bredies et al.~\cite{BHSW17}
  which was only available for the revised version of this manuscript.
  The first version of our paper contained an extrinsic TGV approach, 
  a TGV approach for Lie groups as well as a remark on an intrinsic approach by the Schild's ladder.
  This remark was extended in the final version, where we replaced the Schild's ladder by the Pole ladder, 
  since the later one  is an exact scheme for parallel transport in symmetric spaces.
  However, our approach is different from those in ~\cite{BHSW17}. 
  As suggested in our original remark, we work on the tangent bundle, while they work on the manifold itself.
  In contrast to our isotropic model, the authors in~\cite{BHSW17} propose an anisotropic one 
  using parallel transport or its approximation by Schild's ladder.
  Moreover,Moreover they focus on a cyclic proximal point algorithm, while we derive a gradient descent method.
  For more details see Remark~\ref{differences}.
  
  In the extrinsic case we choose an alternating
  direction method of multipliers (ADMM) for finding a (local) minimizer of the functionals.
  For the intrinsic models we smooth the functionals so that a gradient descent
  algorithm can be applied. 
  
  Various numerical examples show the denoising
  potential for images with values on the
  \begin{itemize}
 \item spheres $\mathbb S^d$, $d=1,2$, which includes the important case of  cyclic (phase) data;
 \item special orthogonal group $\mathrm{SO}(3)$;
 \item symmetric positive definite $r\times r$ matrices $\SPD{r}$.
\end{itemize}
The explicit expressions required for our computations are given in the Appendix~\ref{sec:app}.
The first two kind of manifolds are compact, while $\SPD{r}$ is an open convex cone in $\mathbb R^{r,r}$. 
  
  We developed the extrinsic IC model and the Midpoint IC approach for manifold-valued images 
	in the SSVM conference paper~\cite{BFPS17}
  and were invited to submit a full journal paper to JMIV.
  The current paper extends the SSVM paper significantly by models for Lie groups as well as all
  extrinsic and intrinsic TGV approaches.
  
  The outline of the paper is as follows: In Section~\ref{sec:euclid},
	we recall the discrete variational models for denoising gray-values images which we
  want to generalize.
	In Section~\ref{sec:ext_mani}, we propose the extrinsic models
	and comment how the ADMM algorithm can be adapted to these models.
	Unfortunately, dealing with manifolds requires to install  certain preliminaries.
	This is briefly done in Section~\ref{sec:mani} and 
	maybe skipped if the reader is familiar with the notation on manifolds.
	We propose an intrinsic Midpoint IC model and a TGV model based on the pole ladder
	in Section \ref{sec:int_mani}. 
	In Section \ref{sec:int_lie}, 
	we follow another idea driven by the group operation to set up intrinsic models for Lie groups.
	Section \ref{sec:ini_alg},
 shows how minimizers of the (smoothed)
  intrinsic models
	can be computed via a gradient descent algorithms and provides the necessary gradients.
	Numerical examples are presented in Section~\ref{sec:Num}.
	The paper finishes with conclusions in
  Section~\ref{sec:Concl}.
  Various technical details for the computation on manifolds are postponed to the appendix.

\section{Models for Real-Valued Images}  \label{sec:euclid}
In this section, we briefly reconsider models with priors containing first and second order
differences for gray-value images, where the focus is on the coupling of first and second order terms.
To keep the technicalities simple,
we just rely on gray-value images, but the approach can be simply generalized
to images with values in an Euclidean space as, e.g.,
RGB images.

Let
\begin{equation}
\grid\coloneqq\{1,\ldots,N_1\}\times\{1,\ldots,N_2\}
\end{equation}
denote the pixel grid of an image of size~\(N_1 \times N_2\)
and \(N\coloneqq N_1 N_2\).
We address grid points by $i = (i_1,i_2)$.
Let \(u\colon\grid\to\mathbb R\) be a gray-value image.
As data fitting term we focus on 
\begin{align} \label{data_disr}
\mathcal E_{\mathrm{data}} (u;f) \coloneqq  \frac{1}{2}\lVert f-u\rVert_2^2,
\end{align}
where the images are considered columnwise reshaped into  vectors.

To set up the different priors we need first and second order differences.
By $D_x u$ we denote
the forward differences in $x$-direction with Neumann (mirror) boundary conditions
\begin{align*}
(D_x u)_{i}\coloneqq
\begin{cases}
u_{i+(1,0)} - u_i & \mathrm{if} \;  i+(1,0) \in \grid, \\
0 & \mathrm{otherwise},
\end{cases}
\end{align*}
and analogously in $y$-direction.
Then
\begin{align}\label{not:1stg}
	\nabla \coloneqq \begin{pmatrix} D_x \\ D_y \end{pmatrix}
\end{align}
serves as discrete gradient and $\nabla u:\Gamma \rightarrow \mathbb R^2$.
For mappings ~$\xi \colon\grid \rightarrow \R^s$ we introduce the mixed norm
\begin{equation} \label{mixed_norm}
\lVert \xi \rVert_{2,1}
\coloneqq
\sum_{i \in \grid} | \xi_{i} |, \quad 
| \xi_{i} | \coloneqq \left( \xi_{i,1}^2 + \ldots \xi_{i,s}^2 \right)^\frac{1}{2}.
\end{equation}
We define the discrete TV regularizer by
\begin{equation}\begin{split}\label{intro:discrTV}
\TV(u) : = \lVert\nabla u\rVert_{2,1} =
\sum_{i \in\grid}
\Big(\sum_{j \in\neighbor(i)} \lvert u_{j}-u_{i}\rvert^2 \Big)^\frac{1}{2},
\end{split}
\end{equation}
where \(\neighbor(i)\coloneqq \{ i+(0,1), i+(1,0) \}\cap\grid\)
denotes the forward neighbors of pixel $i\in \Gamma$.
The backward difference $\widetilde D_x u$ in $x$-direction
is given by
\begin{align}
(\widetilde D_x u)_{i} \coloneqq
\begin{cases}
u_{i} - u_{i-(1,0)} & \mathrm{if} \  i \pm (1,0) \in \grid, \\
0 & \mathrm{otherwise},
\end{cases}
\end{align}
and similarly in  $y$-direction.
The choice of zero at the right boundary of the backward difference becomes clear in \eqref{TGV_discr}.
We will apply backward differences to vectors $\xi:\Gamma \rightarrow \mathbb R^2$
in two forms
\begin{align}\label{not:1sttil}
\widetilde \nabla \coloneqq \begin{pmatrix}
	\widetilde D_x & 0\\
	 \widetilde D_y & 0 \\
	0&\widetilde D_x\\
	0& \widetilde D_y
	\end{pmatrix},
	\quad
	\widetilde \nabla_S \coloneqq \begin{pmatrix}
	\widetilde D_x &0\\
	\frac{1}{2} \widetilde D_y & \frac{1}{2} \widetilde D_x \\
	0& \widetilde D_y
	\end{pmatrix}.
\end{align}
We define central second order differences in $x$-direction 
\begin{align} \label{laplace}
D_{xx} \coloneqq \widetilde D_x D_x,
\end{align}
i.e.,
\begin{align}\label{not:2nd}
(D_{xx} u)_{i} \coloneqq
\begin{cases}
u_{i-(1,0)} - 2 u_{i}+ u_{i+(1,0)}\!\!\!\!\!\!\!\!&  \\& \mathrm{if} \ i \pm (1,0) \in \grid, \\
0 & \mathrm{otherwise},
\end{cases}
\end{align}
and mixed second order differences
\begin{align}\label{dxy}
	D_{xy} \coloneqq \widetilde D_y D_x,
\end{align}
and analogously for the other directions.
Then a $\TV_2$ regularizer 
can be defined by the Frobenuis norm of the Hessian of u,
\begin{align}\label{second_disrc}
&\TV_2(u)
\coloneqq  \lVert \widetilde \nabla \nabla u \rVert_{2,1}\\
&= \sum_{i \in \grid}
\left(
|D_{xx} u|_{i}^2 + |D_{yy} u|_{i}^2  
+ |D_{xy}u|_{i}^2 + |D_{yx}u|_{i}^2 
\right)^\frac{1}{2}.
\end{align}
The infimal convolution (IC) of two functions 
\\
$F_i \colon \R^N \rightarrow \R \cup \{+\infty\}$, $i=1,2,$ is defined by
\begin{equation}\label{def_IC}
(F_1 \square F_2)(u) \coloneqq \inf_{u=v+w} \{F_1(v) + F_2(w)\} .
\end{equation}
If $F_i$, $i=1,2,$ are proper, convex, lower semi-continuous and $F_i(u) = F_i(-u)$,
then  $F_1 \square F_2$ is also proper, convex, lower semi-continuous and
the infimum is attained~\cite{Ro70,SST11}.

We consider two common ways to incorporate first and second order information into the prior, namely in an additive way
and by IC.  The corresponding priors look for $\beta \in (0,1)$ as follows:
\begin{enumerate}
	\item \textbf{Additive Coupling}
	\begin{align} \label{add_coup}
	\TV_{1\wedge2}(u) \coloneqq \beta \TV(u) + (1- \beta) \TV_2(u).
	\end{align}
	\item \textbf{Infimal Convolution}
	\begin{align} \label{IC_coup}
	\IC(u)
	\coloneqq \min_{u=v+w} \left\{ \beta \TV(v) + (1- \beta) \TV_2(w) \right\},
	\end{align}
	\end{enumerate}
The IC model is related to TGV of order two given by:
	\begin{enumerate}
	\item[3.]  \textbf{Total Generalized Variation}
	\begin{align}\label{TGV_discr}
	\TGV(u) \coloneqq \min_\xi \left\{ \beta \lVert\nabla u - \xi\rVert_{2,1} + (1-\beta) \lVert\widetilde \nabla_S \xi\rVert_{2,1}\right\},
	\end{align}
\end{enumerate}
In contrast to the IC prior, the TGV prior does not require the
	computation of second order differences.
The relation to the IC model, which by \eqref{intro:discrTV} and \eqref{second_disrc} 
	can be rewritten as 
	\begin{align} 
	\IC(u)
	= \min_{w} \left\{\beta \lVert\nabla (u-w) \rVert_{2,1} + (1-\beta) \lVert \widetilde \nabla \nabla w\rVert_{2,1}\right\}
	\end{align}
appears if $\xi \colon\grid\to\R^2$ in \eqref{TGV_discr} has the form
$\xi = \nabla w$ for some \(w\colon\grid\to\R\).
Then  both models differ only in the use of the nonsymmetric or symmetric backward difference operator.	
	
For the IC and TGV models we are interested in the corresponding decompositions:
\\[1ex]
 \textbf{IC Decomposition} ($u = v+w$)
\begin{align}\label{eq:sec_order_ic_ext_vw}
	E_{\IC} (v,w)
	&\coloneqq
	\frac{1}{2}\lVert f-v-w\rVert_2^2 
	\\
	&+ \alpha \bigl(
	\beta \TV(v) + (1- \beta) \TV_2(w)
	\bigr).
	\end{align}
\textbf{TGV Decomposition} ($\nabla u = \tilde \xi + \xi$)
\begin{align}\label{eq:tv_ext_ua}
E_{\TGV} (u,\xi)
	&\coloneqq
	\frac{1}{2}\lVert f-u\rVert_2^2  \\
	&+\alpha\big( \beta \lVert \underbrace{\nabla u - \xi}_{\tilde \xi} \rVert_{2,1} + (1-\beta) \lVert\widetilde \nabla \xi \rVert_{2,1} \big).
	\end{align}
For details on the discrete models we refer to \cite{SST11}.
In various applications, the individual IC components $v$ and $w$ are 
of interest, e.g., in motion separation~\cite{HK14} or early detection of cracks
in materials during tensile tests~\cite{BBEFSS18,BEFSS15}.
In the Euclidean setting, tools from convex analysis can be applied for finding minimizers of the functionals
including algorithms based on duality theory as the ADMM.

\section{Extrinsic Models for Manifold-Valued Images}\label{sec:ext_mani}

\begin{figure*}[ht]
\centering
  \begin{subfigure}{.49\textwidth}\centering
    \includegraphics{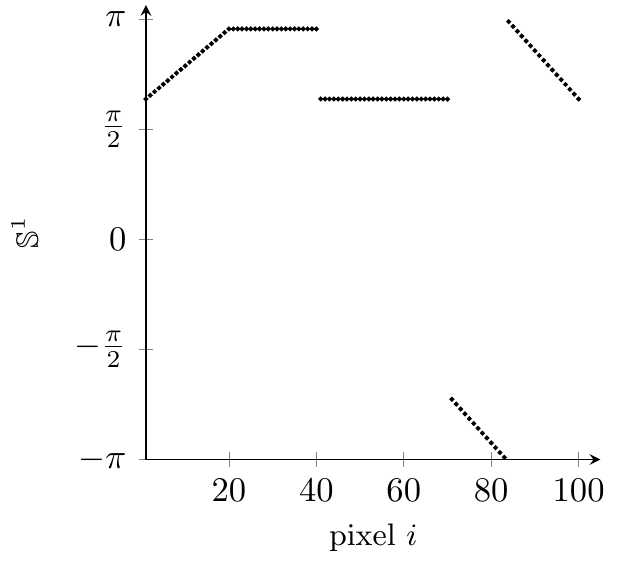}
		\caption{Intrinsic data representation.}\label{ictgv:fig:ex:ic:para}
  \end{subfigure}
  \begin{subfigure}{.49\textwidth}\centering
     \includegraphics{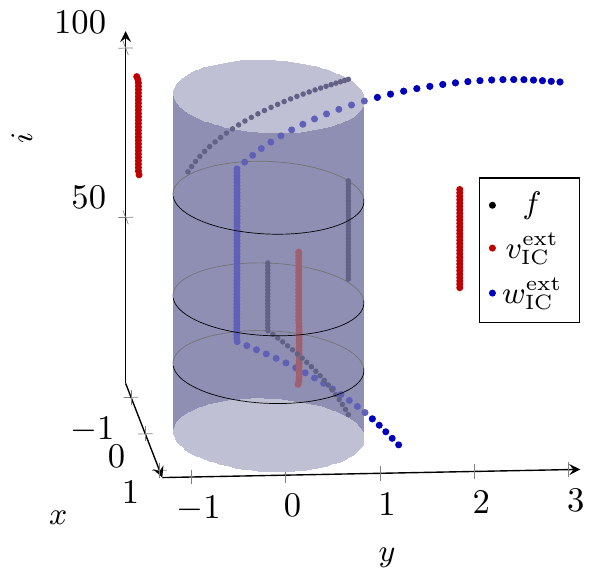}
		\caption{Embedded signals.}\label{ictgv:fig:ex:ic:emb}    
  \end{subfigure}
\caption{ 
\textbf{Extrinsic IC model}: Decomposition of a piecewise geodesic signal~$f$ with values in $\mathbb S^1$.
(a) Original signal~$f$ determined by its angles in $[-\pi,\pi)$. 
(b) Decomposition of $f$ into its piecewise constant part $v_{\mathrm{IC}}^{\mathrm{ext}}$ (red) and piecewise linear 
part $w_{\mathrm{IC}}^{\mathrm{ext}}$ (blue) which can only be depicted in the embedding space $\R^2$ of $\mathbb S^1$.
}\label{fig:ex:ic}
\end{figure*}

The simplest idea to generalize the gray-value models to images
$u: \Gamma \rightarrow {\mathcal M}$ having values in a manifold ${\mathcal M}$
is to embed the manifold into an Euclidean space. 
Recall that by Whitney's theorem \cite{Whi36} every smooth $d$-dimensional
manifold can be smoothly embedded into an Euclidean space of
dimension~$n=2d$.
Moreover, by  Nash's theorem \cite{Nas56}  every Riemannian manifold
can be isometrically embedded into an Euclidean space of suitable
dimension.
Assuming that $\mathcal M$ is embedded
in $\mathbb R^n$, we can just 
apply the three Euclidean models from the previous section, which we denote by
\begin{equation}
{\mathcal E}_*, \quad * \in \{{\mathrm{ADD}},\IC,\TGV\},
\end{equation}
	for $u \in \mathbb R^{nN}$
with the constraint that the image values have to lie in the manifold.
In other words, we are interested in
\begin{equation}
{\mathcal E}_*^{\mathrm{ext}} (u) \coloneqq {\mathcal E}_* (u) + \iota_{\mathcal{M}^N}(u), 
\quad * \in \{\mathrm{ADD},\IC,\TGV\},
\end{equation}
and for the IC and TGV decompositions in
\begin{align}
&E_{\IC}^{\mathrm{ext}} (v,w) \coloneqq E_{\IC} (v,w) + \iota_{\mathcal{M}^N}(v+w),\\
&E_{\TGV}^{\mathrm{ext}} (u,\xi) \coloneqq E_{\TGV} (u,\xi) + \iota_{\mathcal{M}^N}(u),
\end{align}
where 
$\iota_{\mathcal{M}^N}$ denotes the indicator function of the product manifold ${\mathcal{M}^N}$.
Due to the manifold constraints, the models are in general no longer convex.
Exceptions are manifolds which are closed convex sets in the embedding space as, e.g., (the closure of) $\SPD{r}$.
If $\mathcal M$ is closed, we directly
get the existence of a global minimizer by the coercivity and lower
semi-conti\-nuity of the functional.
For the squared $\ell_2$--TV model an extrinsic approach was
given in \cite{RTKB14,RWTKB14} with a sketch how it can be generalized for the additive model.
The extrinsic IC model was discussed in our conference paper \cite{BFPS17}.

To minimize ${\mathcal E}_{\mathrm{ADD}}^{\mathrm{ext}} (u)$, $E_{\IC}^{\mathrm{ext}} (v,w)$ and $E_{\TGV}^{\mathrm{ext}} (u,\xi)$
we apply an alternating direction method of multipliers (ADMM)  \cite{GM76,GM75} in the form given in \cite{BSS16}.
For details we refer to \cite{persch2018}.
As additional step to the Euclidean setting ADMM 
requires the orthogonal projections of elements from the embedding space onto the manifold.
For the manifolds  in our numerical example we notice the following:
\begin{itemize}
\item[-] For $\mathbb S^d \subset \mathbb R^{d+1}$,
the projection is just the normalization of the vector with respect to the Euclidean norm in $\mathbb R^{d+1}$.
\item[-]
For $\mathrm{SO}(3)$, the authors of \cite{RTKB14,RWTKB14} suggested to embed the $\mathrm{SO}(3)$ into~$\mathbb R^9$.
Then the projection requires the singular value decomposition of the matrix in $\mathbb R^{3,3}$ we want to project.
In this paper, we prefer an embedding of  $\mathrm{SO}(3)$  into~$\mathbb R^4$ via the quaternion representation, see Appendix \ref{sec:app}.
This reduces the dimension of the problem and the projection is again just a normalization.
\item[-]
For $\SPD{r} \subset \mathbb R^{n}$, \(n=\frac{r(r+1)}{2}\), the orthogonal projection approach is in general not possible
since the manifold is an \emph{open} cone of $\mathbb R^{n}$.
A numerical remedy would be to project onto the closed cone
and add a small parameter to the eigenvalues of the resulting matrix to make it positive definite.
Often also a convex barrier function as $-\log \det$ is added instead of the indicator function
to stay in the manifold, see, e.g., \cite{Jarre2000}. 
However, in this paper, we apply only intrinsic approaches to images with values in $\SPD{r}$.
\end{itemize}

\begin{remark} (Convergence of ADMM)
If $\mathcal{M}$ is a closed, convex set in $\mathbb R^n$, then the algorithm converges by standard arguments.
For spheres and the $\mathrm{SO}(3)$, convergence is observed numerically, but cannot be guaranteed theoretically.

The convergence of the ADMM for special non-convex functionals was recently
addressed in~\cite{WYZ15}.
Unfortunately, the assumptions of that paper do not fit into our setting:
More precisely, Assumption 2 in~\cite{WYZ15} would require with respect to our setting that the range
of~$(\nabla^\tT, I)^\tT$
is a subset of the range of the identity matrix
which is clearly  not the case.

A possibility to circumvent theoretical convergence problems would be to consider a smoothed version
of the functional such that
it becomes Lipschitz differentiable and a gradient reprojection algorithm can be applied.
For such an algorithm convergence was shown in~\cite{ABS13} for functions 
satisfying a Kurdyka-\L ojasiewicz property.
However, in our experiments, the algorithm shows a bad convergence behavior numerically such
that we prefer ADMM. \hfill $\Box$
\end{remark}

The following example motivates our efforts to find intrinsic IC and TGV decompositions.

\begin{example}\label{ex:S1Signal:ExtIC}
We apply  the extrinsic IC decomposition \eqref{eq:sec_order_ic_ext_vw} with parameters $\alpha=0.03$,
$\beta=\frac13$
to the phase-valued (noise-free) 
signal \(f\) of length \(N_1=100\). In Fig.~\ref{fig:ex:ic} (left) the signal is given by its angles in $[-\pi,\pi)$.
It consists of a linearly increasing line, two constant parts divided by a small jump and a
decreasing part. The second „jump“ in the signal is smaller than it occurs, 
since the shorter arc on the circle is the one “wrapping” around (going over \(\pm\pi\)). 
The “jump” at \(t=82\) is just only due to the representation system.
Embedding each pixel \(f_i\in\mathbb S^1\) into \(\mathbb R^2\) yields the black 
signal in \(\mathbb R^3\) shown in Fig.~\ref{fig:ex:ic} (right).
Since the model decomposes 
\(f \approx v_{\mathrm{IC}}^{\mathrm{ext}} + w_{\mathrm{IC}}^{\mathrm{ext}}\) 
within the embedding space $\mathbb R^2$, 
the components
can only be visualized within $\mathbb R^3$, in particular not in the left plot.
Still, 
\(v_{\IC}^{\mathrm{ext}}\) is piecewise constant and contains the jumps, 
and~\(w_{\IC}^{\mathrm{ext}}\) is continuous. 
\end{example}

\section{Preliminaries on Manifolds}\label{sec:mani}

Let $\mathcal M$ be a
connected, complete $d$-dimensional
Riemannian manifold.
By \(T_x\mathcal M\) we denote the tangent space of $\mathcal M$
at \(x\in\mathcal M\) with the Riemannian
metric~$\langle\cdot,\cdot\rangle_x$ and corresponding norm~$\| \cdot \|_x$.
Further, let $T\MM$ be the tangent bundle of $\MM$. 
By
$\operatorname{dist} \colon \mathcal M \times \mathcal M
\rightarrow \mathbb R_{\ge 0}$
we denote the geodesic distance on \(\mathcal M\).
Let $\mathcal M^N$ be the product or \(N\)-fold power manifold
with product distance
\begin{equation}
\pdist^2(x,y) \coloneqq \Big( \sum_{j=1}^N \operatorname{dist}^2(x_j,y_j) \Big)^\frac{1}{2}.
\end{equation}
Let~\(\gamma_{\overset{\frown}{x,y}}:[0,1] \rightarrow {\mathcal M}\) 
be a (not necessarily shortest)
geodesic connecting~\(x,y\in\mathcal M\).
We will also use the notation
$
\gamma(x,y;t) \coloneqq \gamma_{\overset{\frown}{x,y}} (t) 
$
to address points on the curve.
Further, we apply the notation $\geoPV{x;\xi}$ to characterize the geodesics 
by its starting point $\geoPV{x;\xi}(0) = x$ and direction $\dotgeoPV{x;\xi}(0)=\xi \in T_x{\mathcal M}$.
Note that the geodesic $\gamma_{\overset{\frown}{x,y}}$ is unique
on manifolds with nonpositive
curvature. Simply connected, complete Riemannian manifolds of nonpositive sectional curvature are called Hadamard manifolds. 
Examples are the manifold of positive definite matrices or hyperbolic spaces.
The exponential map $\exp_x\colon T_x\MM \to\MM$ is defined by 
\begin{equation}
\exp_x(\xi) \coloneqq \geoPV{x;\xi}(1).
\end{equation}
Since $\mathcal M$ is connected and complete, we know by the  Hopf-Rinow theorem~\cite{HR31} that the exponential map
is indeed defined on the whole tangent space.
The exponential map realizes a local diffeomorphism 
from a neighborhood $\mathcal{D}_T(0_{x})$ of the origin $0_{x}$ of $T_{x}\MM$ 
into a neighborhood of $x \in \MM$.
More precisely,
extending the geodesic $\geoPV{x;\xi}$ from $t=0$ to infinity
is either minimizing $\dist(x,\geoPV{x;\xi}(t))$ all along or up to a finite time $t_0$ 
and not any longer afterwards.
In the latter case, $\geoPV{x;\xi}(t_0)$ is called cut point 
and the set of all cut points of all geodesics starting from $x$ 
is the cut locus $\mathcal{C}(x)$. 
This allows to define the inverse exponential map, also known as logarithmic map as
\begin{equation} \label{star}
\log_{x} \coloneqq \exp_{x}^{-1}\colon \MM \backslash {\mathcal C} (x) \to T_{x}\MM.
\end{equation}
Then the Riemannian distance between $x,y\in\MM$, for $y\notin \mathcal{C}(x)$, can be written as 
\begin{equation} \label{log_dist}
\dist(x,y) = \langle \log_x(y),\log_x(y)\rangle_{x}^{\frac{1}{2}} = \lVert\log_x(y)\rVert_x.
\end{equation} 
Let $F\colon\MM\to\mathcal{N}$  be a smooth mapping between manifolds and $\xi \in T_x\MM$.
The mapping $DF(x)[\xi]$ from the set of smooth functions on a neighborhood of $x$ to $\RR$
given by 
\begin{equation}
\bigl(DF(x)[\xi]\bigr)f\coloneqq  \xi(f\circ F)
\end{equation}
is a tangent vector in  $T_{F(x)} \mathcal{N}$ 
and the linear mapping
\begin{equation}\label{eq:differential}
DF(x)\colon T_x\MM \to T_{F(x)}\mathcal{N},\quad \xi\mapsto DF(x)[\xi]
\end{equation}
the differential of $F$ at $x \in \MM$. 
Let $F\colon\MM_1\to\MM_2$ and $G\colon\MM_2\to\MM_3$ 
be two smooth mappings. 
Then the differential of their concatenation $G\circ F$ applied to $\xi\in T_x\MM_1$ 
is given by the \emph{chain rule}
\begin{equation}
D(G\circ F)(x)[\xi] = D G\bigl( F(x)\bigr)\bigl[DF(x)[\xi]\bigr].
\end{equation}
For a function $f\colon\MM\to\RR$ the Riemannian gradient $\grad_{\MM}$ is defined by
\begin{equation}
\langle\grad_{\MM}f(x), \xi\rangle_x \coloneqq Df(x)[\xi],\text{ for all }\xi\in T_x\MM.
\end{equation}
A mapping $\mathcal{R}_y\colon\MM \rightarrow \MM$ 
on a Riemannian manifold $\MM$
is called \emph{geodesic reflection} at $x\in \MM$ if
\begin{equation}
\mathcal{R}_x(x) = x \quad \text{and} \quad D(\mathcal{R}_x)(x) = -I.
\end{equation}
A connected Riemannian manifold $\MM$ is (globally)
symmetric if the geodesic reflection at any point $x \in \MM$
is an isometry of $\MM$.
All manifolds considered in this paper are symmetric ones.

Let ${\mathcal X}(\MM)$ be the set of smooth vector fields on $\MM$.
Given a curve $\gamma\colon[0,1]\to\MM$, we denote by ${\mathcal X}(\gamma)$ the set of smooth vector fields along $\gamma$, 
i.e., $X\in{\mathcal X}(\gamma)$ is a smooth mapping $X\colon[0,1]\to T\MM$ with $X(t)\in T_{\gamma(t)}\MM$.
A vector field $X\in{\mathcal X}(\gamma)$ 
is called parallel to $\gamma\colon[0,1]\to\MM$ 
if the covariate derivative along $\gamma$ fulfills
$\frac{D}{\dx t}X = 0$  for all $t\in[0,1]$.
We define the \emph{parallel transport} of a tangent vector $\xi\in T_x\MM$ to $T_y\MM$  by 
\begin{equation}
P_{x\to y} \xi \coloneqq X(1),
\end{equation}
where $X\in\mathcal{X}(\geo{x,y})$ is the vector field parallel to a minimizing geodesic $\geo{x,y}$ with $X(0)=\xi$.
There exist analytical expressions of the parallel transport only for few manifolds as spheres
or positive definite matrices. However, the parallel transport can be locally approximated, e.g., 
by Schild's ladder~\cite{EPS72,KMN2000}
or by the the pole ladder~\cite{LP14Sch}.
In this paper, we focus on the pole ladder since the approximation is  \emph{exact} in symmetric Riemannian manifolds~\cite{Pen18}.
Given $x,y\in\MM$, 
the pole ladder transports $\xi\in T_x\MM$ to $\zeta\in T_y\MM$ in four steps, cf.~Fig.~\ref{fig:pole+schild} left:
\begin{enumerate}
	\item take the mid point between $x$ and $y$, $c \coloneqq \gamma(x,y;\frac{1}{2})$;
	\item map $\xi$ onto the manifold by the exponential map, $e \coloneqq \exp_x(\xi)$;
	\item evaluate the geodesic between $e$ and $c$ at 2, i.e. $p \coloneqq \gamma(e,c;2)$;
	\item lift the end point to the tangent space of $y$ by the logarithmic map and multiply with $-1$ to get 
	$\zeta = P_{x\to y}^{\mathrm{P}}(\xi)\coloneqq-\log_{y}(p)$.	
\end{enumerate}
In summary, the transported vector is given by
\begin{equation}\label{pre:eq:pole}
P_{x\to y}^{\mathrm{P}}(\xi)\coloneqq -\log_{y}\Bigl(\gamma\Bigl(\exp_{x}(\xi),\gamma\bigl(x,y;\tfrac{1}{2}\bigr);2\Bigr)\Bigr)\in T_y\MM.
\end{equation}
For comparison, Schild's ladder transports as follows, cf.~Fig.~\ref{fig:pole+schild} right:
\begin{enumerate}
	\item map $v$ to the manifold by the exponential map, $e \coloneqq \exp_x(\xi)$;
	\item take the mid point between $y$ and $e$, $c \coloneqq \gamma(y,e;\frac{1}{2})$;
	\item evaluate the geodesic between $x$ and $c$ at 2, $p \coloneqq \gamma(x,c;2)$;
	\item lift the point $p$ to the tangent space of $y$ with the logarithmic map, $w\coloneqq \log_{y}(p)$.
\end{enumerate}
The transported vector is given by
\begin{equation}\label{pre:eq:schild}
P_{x\to y}^{\mathrm{S}}(\xi)\coloneqq \log_{y}\Bigl(\gamma\Bigl(x,\gamma\bigl(y,\exp_{x}(\xi);\tfrac{1}{2}\bigr);2\Bigr)\Bigr)\in T_y\MM.
\end{equation}
\begin{figure*}[t]
	\centering
	\includegraphics{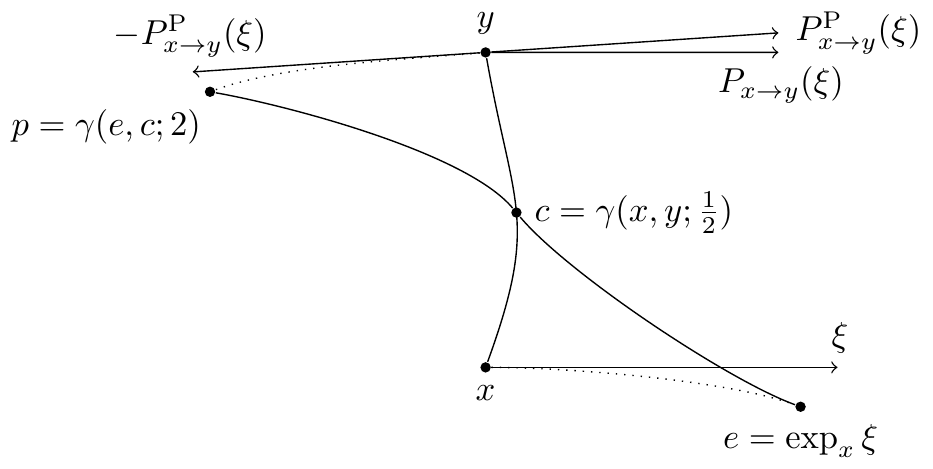}
	\hspace{0.2cm}
	\includegraphics{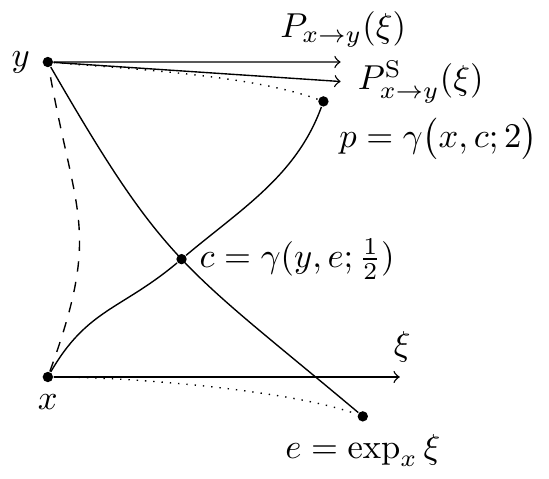}
	\caption{Illustration of pole ladder (left) and Schild's ladder (right) for the approximation of $P_{x\to y} \xi$.}\label{fig:pole+schild}
\end{figure*}%
In our minimization algorithms, we will need the Riemannian gradient of special functions,
in particular of those appearing in the pole ladder \eqref{pre:eq:pole}.
This gradient can be computed for symmetric Riemannian manifolds using the theory of Jacobi fields.
The following lemma collects the final results which can be partially found 
in~\cite{BBSW16,BHSW17,Carmo92}. 
For the complete proof we refer to~\cite{persch2018}.
\begin{lemma}\label{pre:differentials}
Let $\MM$ be a symmetric Riemannian ma\-ni\-fold and 
$F$ one of the functions i) - v) listed below together with 
the coefficient maps $\alpha: \mathbb R \rightarrow \mathbb R$ and parameters $T$.
Then the differential $DF(x)$ at $x \in \MM$ is given for all $\xi \in T_x \MM$ by 
	\begin{equation}\label{alles}
	D F (x) [\xi] = \sum_{k=1}^d \langle \xi, \Xi_k(0)\rangle_{x} \alpha(\kappa_k) \Xi_k(T),
	\end{equation}
where $\{\Xi_k\}_{k=1}^d$ denotes a parallel transported orthogonal frame along the geodesic $\gamma$
with $\gamma(0) = x$ and $\gamma (1) = y$ depending on $F$.
Further, the frame diagonalizes the Riemannian curvature tensor 
$R(\cdot,\dot\gamma)\dot{\gamma}$ with respective eigenvalues $\kappa_k$, $k = 1,\dots,d$.
The functions $F$ and $\alpha$ are given as follows:
	\begin{enumerate}
		\item[i)] For $F: = \exp_{\cdot} (u)$, we have $T=1$,  $y \coloneqq \exp_{x} u$ and 
		\begin{equation}
		\alpha(\kappa) \coloneqq \begin{cases}
		\cosh(\sqrt{-\kappa})\quad&\kappa <0,\\		
		1\quad&\kappa =0,\\
		\cos(\sqrt{\kappa})\quad&\kappa >0.
		\end{cases}
		\end{equation}
		\item[ii)] \ For $F  \coloneqq \log_\cdot (y)$, we have $T=0$ and
		\begin{equation}
		\alpha(\kappa) \coloneqq \begin{cases}
		-\sqrt{-\kappa}\frac{\cosh(\sqrt{-\kappa})}{\sinh(\sqrt{-\kappa})}\quad&\kappa <0,\\		
		-1\quad&\kappa =0,\\
		-\sqrt{\kappa}\frac{\cos(\sqrt{\kappa})}{\sin(\sqrt{\kappa})}\quad&\kappa >0.
		\end{cases}
		\end{equation}
		
		\item[iii)]\ For $F \coloneqq \log_y (\cdot)$, we have  $T=1$ and
		\begin{equation}
		\alpha(\kappa) \coloneqq \begin{cases}
		\frac{\sqrt{-\kappa}}{\sinh(\sqrt{-\kappa})}\quad&\kappa <0,\\		
		1\quad&\kappa =0,\\
		\frac{\sqrt{\kappa}}{\sin(\sqrt{\kappa})}\quad&\kappa >0.
		\end{cases}
		\end{equation}
		\item[iv)] For $F\coloneqq \geodesic{\cdot,y}(\tau)$, we have $T=\tau$ and
		\begin{equation}
		\alpha(\kappa) \coloneqq \begin{cases}
		\frac{\sinh\bigl(\sqrt{-\kappa}(1-\tau)\bigr)}{\sinh(\sqrt{-\kappa})}\quad&\kappa <0,\\		
		1-\tau\quad&\kappa =0,\\
		\frac{\sin\bigl(\sqrt{\kappa}(1-\tau)\bigr)}{\sin(\sqrt{\kappa})}\quad&\kappa >0.
		\end{cases}
		\end{equation}
		\item[v)] For $F \coloneqq \geodesic{y,\cdot}(\tau)$, we have $T=1-\tau$ and
		\begin{equation}
		\alpha(\kappa) \coloneqq \begin{cases}
		\frac{\sinh(\sqrt{-\kappa}\tau)}{\sinh(\sqrt{-\kappa})}\quad&\kappa <0,\\		
		\tau\quad&\kappa =0,\\
		\frac{\sin(\sqrt{\kappa}\tau)}{\sin(\sqrt{\kappa})}\quad&\kappa >0.
		\end{cases}
		\end{equation}
\item[vi)]\ Finally, we obtain for $F \coloneqq 	\exp_x (\cdot)$  with
\begin{equation}
		\alpha(\kappa) = \begin{cases}
		\frac{\sinh(\sqrt{-\kappa})}{\sqrt{-\kappa}}\quad&\kappa <0,\\		
		1&\kappa =0,\\
		\frac{\sin(\sqrt{\kappa})}{\sqrt{\kappa}}\quad&\kappa >0,\\
		\end{cases}
		\end{equation}
		and $T=1$
		that the differential $D F (u)$ of $F$ at $u \in T_x \MM$ is given by \eqref{alles},
		where we have to replace $x \in \MM$ by $u \in T_x \MM$ and to set $y \coloneqq \exp_x u$.
\end{enumerate}
\end{lemma}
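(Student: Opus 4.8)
The plan is to derive all six differential formulas from a single computation of a Jacobi field along the geodesic $\gamma$ connecting $x$ and $y$, since every function in the list is built from $\exp$, $\log$, and the geodesic evaluation map, all of which fit into the Jacobi field framework. First I would set up notation: fix $x\in\MM$, a tangent direction $\xi\in T_x\MM$, and let $c(s)$ be a curve in $\MM$ (or in $T_x\MM$ for cases i) and vi)) representing the variation with $c(0)$ the relevant base point and $\dot c(0)=\xi$. For each $F$ I form the variation of geodesics $(s,t)\mapsto \Gamma(s,t)$ where $t\mapsto\Gamma(s,t)$ is the geodesic dictated by $F$ (e.g. for iv), $\Gamma(s,t)=\geodesic{c(s),y}(t)$), so that $J(t):=\partial_s\Gamma(s,t)|_{s=0}$ is a Jacobi field along $\gamma(t)=\Gamma(0,t)$, and $DF(x)[\xi]=J(T)$ for the appropriate parameter $T$.

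The key step is to solve the Jacobi equation $\frac{D^2}{\dx t^2}J + R(J,\dot\gamma)\dot\gamma = 0$ in the parallel frame $\{\Xi_k(t)\}$. Because $\MM$ is symmetric, the curvature tensor is parallel, so the eigenvalues $\kappa_k$ of $R(\cdot,\dot\gamma)\dot\gamma$ are constant along $\gamma$ and the Jacobi equation decouples into $d$ scalar ODEs $\ddot j_k + \kappa_k j_k = 0$ (after rescaling $\dot\gamma$ to unit speed, or absorbing $\|\dot\gamma\|^2$ into $\kappa_k$), whose solutions are spanned by the standard $\cosh/\cos$, linear, and $\sinh/\sin$ functions according to the sign of $\kappa_k$. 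The six cases then reduce to reading off the boundary conditions: for $F=\exp_\cdot(u)$ one has $J(0)=\xi$ and $\frac{D}{\dx t}J(0)$ determined by how $u$ is parallel-transported, giving $\alpha=\cosh/\cos$ at $T=1$; for $F=\log_y(\cdot)$ one has $J(1)=\xi$, $J$ vanishing appropriately at the fixed endpoint, giving $\alpha=\sqrt{\kappa}/\sin(\sqrt\kappa)$ at $T=1$; for $F=\geodesic{\cdot,y}(\tau)$ one imposes $J(0)=\xi$, $J(1)=0$ and evaluates at $T=\tau$, producing the ratio $\sin(\sqrt\kappa(1-\tau))/\sin(\sqrt\kappa)$; the remaining cases ii), iii), v), vi) are obtained the same way (case ii) is $\frac{D}{\dx t}$ of the $\log$ variation evaluated at $T=0$, hence the cotangent-type coefficient, and vi) is the derivative in the tangent-space slot, producing $\sin(\sqrt\kappa)/\sqrt\kappa$). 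Expressing the resulting $J(T)$ in the frame $\{\Xi_k\}$ and using that $\langle\xi,\Xi_k(0)\rangle_x$ are the coordinates of the initial condition yields exactly formula~\eqref{alles}.

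The main obstacle I expect is bookkeeping rather than a conceptual difficulty: one must be careful about (a) the distinction between varying the base point versus varying a tangent vector (cases i) and vi)), where the initial covariant derivative of the Jacobi field is not simply zero and requires the second fundamental form of the exponential map / the formula for $D\exp$; (b) sign and endpoint conventions — which endpoint of $\gamma$ is fixed and whether $T=0$, $T=\tau$, or $T=1$ — since a sign error propagates into $\alpha$; and (c) justifying the reduction to the scalar ODEs, i.e. that in a symmetric space the parallel orthonormal frame diagonalizing $R(\cdot,\dot\gamma)\dot\gamma$ at one point does so along all of $\gamma$, which follows because $\nabla R=0$ forces the eigenvalues to be constant and the eigenspaces to be parallel. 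Since the excerpt explicitly defers the full proof to~\cite{persch2018} and only states the lemma as a collection of known results, I would present the above as the scheme and carry out one representative case (say iv), the geodesic variation) in detail, indicating that the others follow by the identical Jacobi-field argument with the stated boundary data.
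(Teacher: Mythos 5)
Your plan is exactly the route the paper takes: the text preceding the lemma states that the differentials are computed "using the theory of Jacobi fields" on symmetric manifolds (where $\nabla R=0$ lets the Jacobi equation decouple in the parallel eigenframe of $R(\cdot,\dot\gamma)\dot\gamma$), and the full derivation is deferred to the cited thesis rather than carried out in the paper. Your boundary conditions for each case (e.g.\ $J(0)=\xi$, $\tfrac{D}{\dx t}J(0)=0$ for i); $J(0)=0$, $\tfrac{D}{\dx t}J(0)=\xi$ for vi); $J(0)=\xi$, $J(1)=0$ with evaluation at $T=\tau$ for iv); the covariant $t$-derivative at $T=0$ for ii)) reproduce precisely the stated coefficients $\alpha(\kappa)$, so the proposal is correct and essentially identical in approach.
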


The adjoint operator $(D F)^* (x)\colon T_{F(x)}\MM\to T_x\MM$ of \eqref{alles} is given by
\begin{equation}\label{pre:eq:adjoint}
(D F)^* (x) [w] = \sum_{k=1}^d \langle w,\Xi_k\rangle_{F(x)} \alpha_k\xi_k, \quad w\in T_{F(x)}\MM.
\end{equation}
%
\section{Intrinsic Models for Manifold-Valued Images}\label{sec:int_mani} 
%
In this section, we develop intrinsic variational models to process
manifold-valued images~\(u\colon\mathcal G\to\mathcal M\).
Instead of the data term \eqref{data_disr} we use
\begin{align}\label{data_mani}
\mathcal{E}_{\mathrm{data}}^{\mathrm{int}}(u;f) = \frac{1}{2} \pdist^2 (f,u).
\end{align}

\subsection{First Order Differences}
%
We define forward differences in $x$-direction by
\begin{equation} \label{eq:gradUManifoldLog}
	(D_x^{\mathrm{int}} u)_{i}
	\coloneqq
	\begin{cases}
	\log_{u_{i}}u_{i+(1,0)}\quad&\mathrm{if} \; i+(1,0)\in\grid,\\	0\quad&\text{otherwise},
	\end{cases}
\end{equation}
and analogously in $y$-direction.
Then we define
\begin{align}\label{not:1stg}
	D^{\mathrm{int}} \coloneqq \begin{pmatrix} D_x^{\mathrm{int}} \\ \nabla_y^{\mathrm{int}} \end{pmatrix}
\end{align}
as discrete gradient.
As counterpart of \eqref{mixed_norm} we introduce for  
$\xi = (\xi_i)_{i \in \Gamma}$ 
with \(\xi_i \in (T_{u_{i}}\mathcal{M})^s\)
the expression
\begin{equation} \label{mixed_mani}
\lVert \xi \rVert_{2,1,u} \coloneqq \sum_{i\in\grid}\left( \lVert \xi_{i,1} \rVert_{u_i}^2 + \ldots + \lVert \xi_{i,s} \rVert_{u_i}^2 \right)^\frac{1}{2}.
\end{equation}
Having \eqref{log_dist} in mind, the TV regularizer for manifold-valued images becomes
\begin{align} \label{TV_mani}
\TV^{\mathrm{int}}(u)
&\coloneqq
\lVert \nabla^{\mathrm{int}} u \rVert_{2,1,u}\\
&= \sum_{i \in\grid}
\Big(
\sum_{j \in\neighbor(i)}
\dist^2(u_{i},u_{j})
\Big)^\frac{1}{2}. 
\end{align}
The model 
$
\mathcal{E}_{\mathrm{ data}}^{\mathrm{int}}(u;f) + \alpha \operatorname{TV}^{\mathrm{int}}(u)
$
was already considered in \cite{LSKC13,WDS14}.
Recently, several attempts have been made to translate concepts from convex analysis to the  manifold-valued setting
and it turns out that a rich theory of convex functions can be built in Hadamard manifolds,
for an overview see, e.g., ~\cite{Bac14}.
Then the functional is convex so that various algorithms as, 
e.g., the cyclic proximal point algorithm can be proved to converge, see \cite{WDS14}.

\subsection{Second Order Differences via Midpoints of Geodesics}
%
To incorporate second order differences into the functional is not straightforward 
since there is no general definition of second order differences for manifold-valued data.
We emphasize that we do not speak about Hessians of real-valued functions living on a manifold. 
In our case, the differences
are taken with respect to ${\Gamma}$.
If the manifold is in particular a Lie group, additions can be replaced by group operations
which we will consider in the next section. 
In this section, we adopt the definition of the absolute value of second order differences from~\cite{BBSW16}.
Obser\-ving that in the Euclidean case the absolute second order difference
  of $x_1,x_1,x_3 \in \mathbb R^d$ can be rewritten as
  \( |x_1-2x_2+x_3| = 2 |\frac{1}{2}(x_1+x_3)-x_2|\),
  we define a counterpart for
  $x_1,x_2,x_3 \in \mathcal M$  as 
	\begin{align}
    \mathrm{d}_2(x_1,x_2,x_3)
    \coloneqq
    \min_{c \in\mathcal C_{x_1,x_3}}
     \operatorname{dist}(c,x_2),
   \end{align}
	where \(\mathcal C_{x_1,x_3}\) 
	is the set of mid
  points~\(\gamma_{\overset{\frown}{x_1,x_3}}(\frac{1}{2})\)
  of all geodesics joining $x_1$ and $x_3$.
Similarly,  second order mixed
differences were defined for \(x_i\in\mathcal M, i=1,\ldots,4,\) in~\cite{BW16}:
   \[
    \mathrm{d}_{1,1}(x_1,x_2,x_3,x_4)
    \coloneqq
    \min_{c \in\mathcal C_{x_1,x_3}, \tilde c\in\mathcal C_{x_2,x_4}}
     \operatorname{dist}(c,\tilde c).
   \]
We emphasize  that	in contrast to the TV functional $\TV^{\mathrm{int}}$
the second order absolute difference \(\mathrm d_2\) is not convex in \(x_i\), $i=1,3$
on Hadamard manifolds. 
However, using this definition we can introduce 
the absolute value of the second order difference in $x$-direction
     \begin{align}
       (\mathrm{d}_{xx}^{\mathrm{int}}u)_{i}
        \coloneqq\begin{cases}
          \mathrm{d}_2(u_{{i} + (1,0)},u_{i},u_{{i} - (1,0)})&\mathrm{if} \; i \pm (1,0) \in \grid, \\
          0&\text{otherwise},
        \end{cases}
     \end{align}
     and similarly in $y$-direction.
Note that $\mathrm{d}_{xx}^{\mathrm{int}}(u)_{i}$ is the counterpart of the \emph{absolute value} of the Euclidean difference $\frac{1}{2}|(D_{xx} u)_{i}|$.
The absolute value of the mixed derivative $\frac{1}{2}|(D_{xy} u)_{i}|$ is replaced in the manifold-valued setting by
		\begin{align}
		(\mathrm{d}_{xy}^{\mathrm{int}} u)_{i}
        \coloneqq
    \begin{cases}
      	\mathrm d_{1,1}
          \bigl(
            u_{i},
            \!\!&\!\!
            u_{i+(0,-1)},
            u_{i+(1,0)},
            u_{i+(1,-1)}
          \bigr)
          \\& \text{if}
          \ i \pm (0,1) \wedge
      	i + (1,0)\in \grid, \\
      	0 & \mathrm{otherwise},
      	\end{cases}
		\end{align}
    and similarly for $\mathrm{d}_{yx}^{\mathrm{int}}$. 
		Then we define the following counter part of $\TV_2$:
    \begin{align} \label{TV2_mani}
      \operatorname{TV}_2^{\mathrm{int}}(u)
        \coloneqq
          \sum_{i \in\grid}&
            \Bigl(
						  (\mathrm{d}_{xx}^{\mathrm{int}}u)_{i}^2 +(\mathrm{d}_{yy}^{\mathrm{int}}u)_{i}^2\\
        &+		(\mathrm{d}_{xy}^{\mathrm{int}} u)_{i}^2  + (\mathrm{d}_{xy}^{\mathrm{int}} u)_{i}^2
						\Bigr)^\frac{1}{2}.
      \end{align}
	In~\cite{BBSW16}, anisotropic versions of 
	$\operatorname{TV}^{\mathrm{int}}$ and $\operatorname{TV}_2^{\mathrm{int}}$ were used to set up an additive prior
	within a denoising model.
	Here we focus on the isotropic prior given by\\[1ex]
  \textbf{Additive Coupling}
	\begin{equation} \label{model_add_int}	
	\TV_{1 \wedge 2}^{\mathrm{int}}(u)
	\coloneqq
	\beta \TV^{\mathrm{int}}(u) + (1-\beta) \TV_2^{\mathrm{int}}(u).
	\end{equation}

\begin{figure*}\centering
  \begin{subfigure}{.49\textwidth}\centering
    \includegraphics{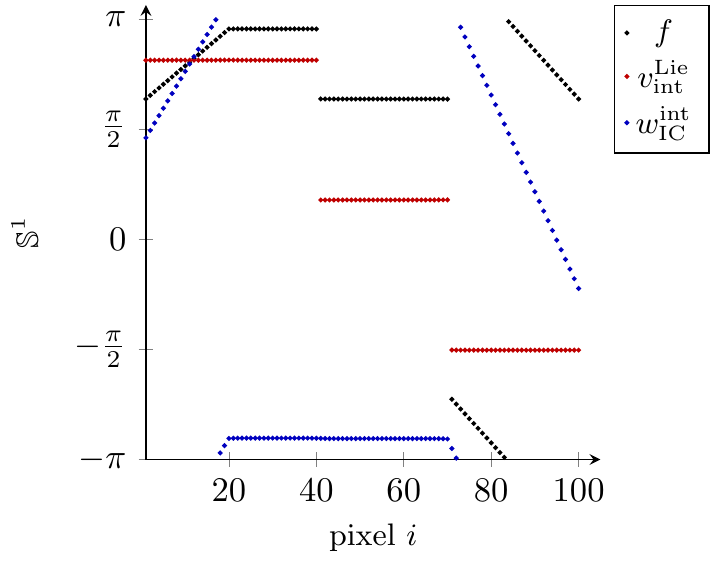}
		\caption{Intrinsic data representation.}\label{ictgv:fig:int:mid:para}
  \end{subfigure}
  \begin{subfigure}{.49\textwidth}\centering
    \includegraphics{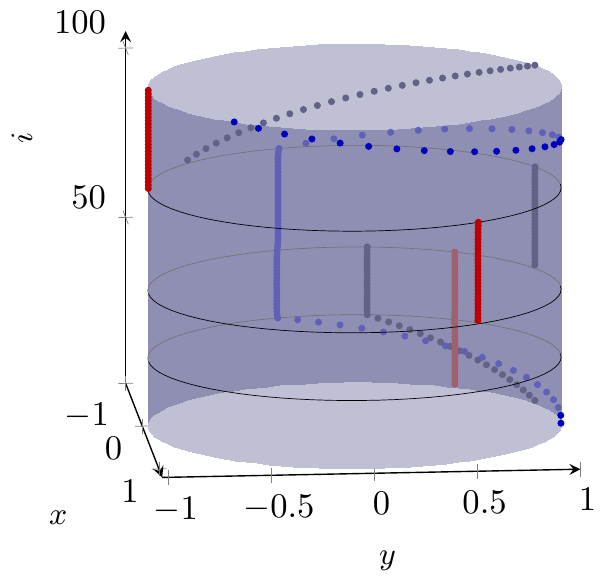}
		\caption{Embedded signals.}\label{ictgv:fig:int:mid:emb}
  \end{subfigure}
	\caption[]{\textbf{Midpoint IC model}: Decomposition of~\(f\) from Fig.~\ref{fig:ex:ic}.
		(a) Original piecewise geodesic
		signal $f$ (black),
		its the piecewise constant part $v^{\mathrm{int}}_{\IC}$ (red) and piecewise geodesic part $w^{\mathrm{int}}_{\IC}$ (blue) 
		parameterized by their angles in $[-\pi,\pi)$.
		(b) Same signals in the embedding space $\R^2$.}
	\label{fig:int:mid}
\end{figure*}

	Concerning our IC model we realize that in the Euclidean setting the IC of two
one-homogeneous functions $F_1,F_2$ as for example  $\TV$ and $\TV_2$,
  can be rewritten as
  \begin{equation}\label{scaling}
    F_1 \square F_2(u) = \frac{1}{2} \inf_{u = \frac{1}{2} (v+w)} \{ F_1(v) + F_2(w)\}.
  \end{equation}
  Now we may consider the ``midpoint infimal convolution''
  of $F_i\colon \mathcal M \rightarrow \mathbb R$, $i=1,2$,
   \[
    F_1 \square_m F_2(u)
    \coloneqq  \inf_{u \in \mathcal{C}_{v,w} } \{ F_1(v) + F_2(w)\},
  \]
	in the following ``Midpoint'' IC prior:
	\\[1ex]
	\textbf{Infimal Convolution} (Midpoint Approach)
	\begin{align}\label{ficm}
	{\mathrm{IC}}^{\mathrm{int}} (u)
	\coloneqq 
	\inf_{u \in \mathcal{C}_{v,w} } \{ \beta \TV^{\mathrm{int}}(v) \square_m (1-\beta) \TV^{\mathrm{int}}_2 (w) \}.
	\end{align}
	We are interested in the 
	\\[1ex]
	\textbf{IC Decomposition} (Midpoint Approach: $u = \geodesic{v,w}(\frac{1}{2})$)
	\begin{equation}	\label{eq:midpointorig}	
	\begin{split}	
	E_{\IC}^{\mathrm{int}}(v,w)
	&\coloneqq
	\frac{1}{2} \sum_{i \in\grid}
	\dist^2(\geodesic{v_i,w_i}
	(\tfrac{1}{2}),f_i)
	\\&\qquad+
	\alpha\bigl(
	\beta \TV^{\mathrm{int}}(v) + (1-\beta) \TV_2^{\mathrm{int}}(w)
	\bigr)
	\end{split}
	\end{equation}
	Here, $\geodesic{v_i,w_i}(\frac{1}{2})$
	addresses the midpoint of the geodesic having smallest distance from $f_i$, for all $i\in\grid$,
	and we finally set $u \coloneqq \geodesic{v,w}(\frac{1}{2})$.

\begin{example}\label{ex:S1Signal:MPIC}
	We consider the signal \(f\)
	from Fig.~\ref{fig:ex:ic}.
	Its Midpoint IC decomposition with parameters $\alpha=0.005$,
	$\beta=\frac25$ into a piecewise constant part~\(v^{\mathrm{int}}_{\IC}\) and a piecewise
	geodesic part~\(w^{\mathrm{int}}_{\IC}\) is shown in Fig.~\ref{fig:int:mid}.
	In contrast to the extrinsic model, both parts have values in the manifold now and can be also visualized in the left plot.	
\end{example}

\subsection{Intrinsic TGV Model}\label{sec:int_mani:tgv} 

TGV does not require the definition of second order differences.
The first summand in the Euclidean  TGV model \eqref{TGV_discr}
can be replaced for $\xi = (\xi_i)_{i \in \Gamma}$ 
with \(\xi_i \in (T_{u_{i}}\mathcal{M})^2\)
by
\begin{equation}
\lVert \nabla^{\mathrm{int}} u - \xi \rVert_{2,1,u}.
\end{equation}
The treatment of the backward differences $\widetilde \nabla_S \xi$ in the second TGV summand requires 
to ``substract'' tangent vector from different tangent spaces. 
For this purpose, we apply the parallel transport between the tangent spaces.
We realize the parallel transport by the pole ladder \eqref{pre:eq:pole} which is exact in symmetric Riemannian manifolds. 
Then the backward difference of a vector field $\xi \in T_{u}\mathcal{M}^N$, i.e. $\xi_i \in T_{u_i}\mathcal{M}^N$, 
in $x$-direction reads as
\begin{equation}
(\widetilde{D}_x^{\mathrm{int}} \xi)_i \coloneqq 
\left\{
\begin{array}{ll}
\xi_i - P_{u_{i-(1,0)}\to u_i}^\mathrm{P}(\xi_{i-(1,0)}) & \mathrm{if} \; i\pm(1,0)\in\grid,\\
0\quad&\mathrm{otherwise,}
\end{array}
\right.
\end{equation}
similarly in $y$-direction. 
Application of backward differences to a vector field $\xi\in (T_u\mathcal{M}^N)^s$ 
is meant componentwise.
In our minimization algorithms we will need the differential of the backward differences.
Note that the pole ladder consists only of the concatenation of geodesics, exponential and logarithmic maps
whose differentials are given in Lemma \ref{pre:differentials}.
For the differentials of the direct parallel transport in $\mathbb{S}^d$ and $\SPD{r}$ we refer to~\cite{BHSW17}.
We set 
\begin{equation}
\widetilde{\nabla}^{\mathrm{int}} 
\coloneqq 
\begin{pmatrix}
\widetilde{D}_x^{\mathrm{int}}&0\\
\widetilde{D}_y^{\mathrm{int}}&0\\
0& \widetilde{D}_x^{\mathrm{int}}\\
0&\widetilde{D}_y^{\mathrm{int}}
\end{pmatrix}.
\end{equation}
For simplicity of computations, we use $\widetilde{\nabla}^{\mathrm{int}}$ instead of the counterpart of $\widetilde{\nabla}_S$
to define a (pole ladder) TGV model by
\\[1ex]
	\textbf{Total Generalized Variation}
\begin{align}\label{tgvmaniinf_pole}
\TGV^{\mathrm{int}}(u)
\coloneqq &
\inf_{\xi} \left\{ \beta\lVert\nabla^{\mathrm{int}} u-\xi\rVert_{2,1,u} \right.\\
& + \left. (1-\beta)\lVert\widetilde\nabla^{\mathrm{int}} \xi \rVert_{2,1,u}\right\}.
\end{align}
Again we are interested in the 
\\[1ex]
	\textbf{TGV Decomposition} ($\nabla^\mathrm{int} u= \tilde \xi + \xi$)
\begin{equation}\label{mod_mani_tgv_pole}
\begin{split}
&E_{\TGV}^{\mathrm{int}}(u,\xi)
\coloneqq 
\frac{1}{2}\pdist^2(u,f)\\
&+\alpha \Big( \beta\lVert \nabla^\mathrm{int} u-\xi \rVert_{2,1,u}+ (1-\beta)\lVert\widetilde\nabla^{\mathrm{int}} \xi\rVert_{2,1,u} \Big).
\end{split}
\end{equation}

\begin{figure*}
\centering	
    \begin{subfigure}{.49\textwidth}\centering
      \includegraphics{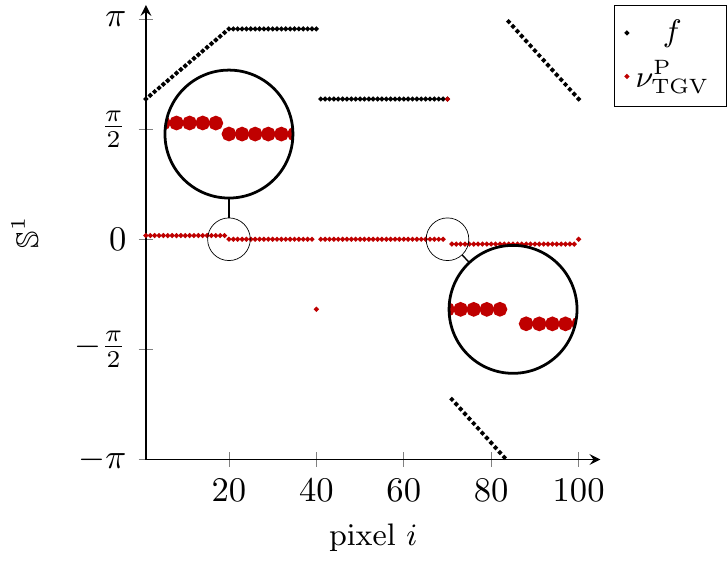}      
		\caption{Intrinsic representation of pole ladder TGV.}\label{ictgv:fig:int:poletgv:para}
    \end{subfigure}
    \begin{subfigure}{.49\textwidth}\centering
      \includegraphics{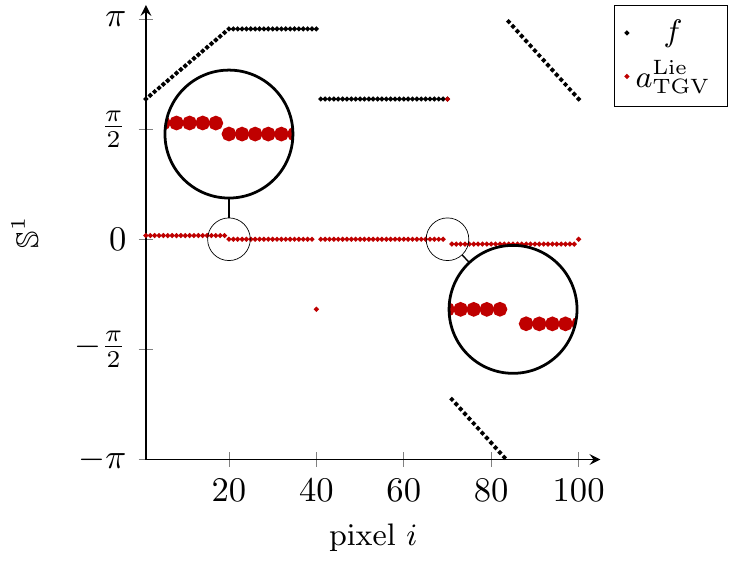}
		\caption{Intrinsic representation of Lie group TGV.}\label{ictgv:fig:int:poletgv:lie}
    \end{subfigure}
	\caption[]{\textbf{Pole ladder TGV model} (left) versus \textbf{Lie group TGV model} (right) 
	  applied to $f$ from Fig.~\ref{fig:ex:ic}.
	  Using $\xi^{\mathrm{P}}_{\TGV} = \nu^{\mathrm{P}}_{\TGV} \zb u^{\perp}$, 
		see Example \ref{ex:S1Signal:TGV}, 
		the signal $\nu^{\mathrm{P}}_{\TGV}$  (red) can be visualized in $[-\pi,\pi)$. 
		Interestingly, the components of the signal~\(\nu^{\textrm{P}}_{\textrm{TGV}}\) and \(a^{\textrm{Lie}}_{\textrm{TGV}}\)
		from Example \ref{ex:S1Signal:LieIC}
differ by $3.8 \times 10^{-4}$.
	 }\label{fig:both_TGV}
\end{figure*}

\begin{example}	\label{ex:S1Signal:TGV}
	We apply the pole ladder TGV model~\eqref{mod_mani_tgv_pole} 
	with parameters $\alpha=0.0101$, $\beta=\frac{100}{101}$
	to the signal $f$ from Fig.~\ref{fig:ex:ic}.
	Since we have for $x \in \mathbb S^1$ that 
	$T_x \mathbb S^1 \coloneqq \{\nu {\zb x}^\perp: \\nu \in \mathbb R\}$,
	with the embedding $\zb x \in \mathbb R^2$ of $x$,
	the tangent vectors vectors
	\(\xi_i \in T_{u_i}\), $i=1,\ldots,100$,
	can be represented by $\nu_i \in[-\pi,\pi)$,
	where $	\xi_i = \nu_i {\zb u}_i^\perp$.
	The result is shown in Fig. \ref{fig:both_TGV} left.
	The signal $\nu^{\mathrm{P}}_{\mathrm{TGV}} \coloneqq (\nu_i)_{i=1}^{100}$ 
	approximates the finite differences of \(f\) 
	taking its phase-valued structure into account.	
\end{example}

\begin{remark}[Comparison to~\cite{BHSW17}]\label{differences}
In the parallel work~\cite{BHSW17}, the authors introduced an axiomatic 
discrete TGV approach for manifold-valued images.
In the one-dimensional setting, they proposed the prior 
\begin{align}
	&\TGV^{\mathrm{BHSW}} (u,v) \coloneqq \min_v \Big\{
  \sum_{i \in \Gamma} \beta\dist(u_{i+1},v_i)  \\
&+ (1-\beta) 
\dist\left( v_i, \gamma \left(u_{i-1}, \gamma (u_i,v_{i-1}; \tfrac12);2 \right) \right)  \Big\}.
\end{align}
In contrast to our prior, $\TGV^{\mathrm{BHSW}}$ is directly defined on the manifold $\MM$ and not on $T\MM$.
Setting 
$
v_i \coloneqq \exp_{u_i} \xi_i
$
we can relate the distances in $\TGV^{\mathrm{BHSW}}$ to those in our $\TGV^{\mathrm{int}}$ prior by
\begin{align}
&\dist(u_{i+1},v_i) = \dist(u_{i+1},\exp_{u_i} \xi_i) \\
&\approx \|\log_{u_i} u_{i+1} - \xi_i\|_{u_i} = \| (\nabla^{\mathrm{int}} u)_i - \xi_i\|_{u_i},
\end{align}
and
\begin{align}
&\dist\left( v_i, \gamma \left(u_{i-1}, \gamma (u_i,v_{i-1}; \tfrac12);2 \right) \right) \\
&= \dist \left(\exp_{u_i} \xi_i, \exp_{u_i} P_{u_{i-1}\to u_i}^\mathrm{S} \xi_{i-1} \right)\\
&\approx \|\xi_i - P_{u_{i-1}\to u_i}^\mathrm{S} \xi_{i-1}\|_{u_i}
\approx
\|\xi_i - P_{u_{i-1}\to u_i}^\mathrm{P} \xi_{i-1}\|_{u_i}\\
&= \|\widetilde \nabla_x \xi_i \|_{u_i}.
\end{align}

In the two-dimensional setting, we prefer an isotropic models instead of an anisotropic one  in~\cite{BHSW17}.
For minimizing the TGV decomposition model we apply a
gradient descent algorithm to a slightly smoothed version while the authors in~\cite{BHSW17} use
a cyclic proximal point algorithm  without any convergence guarantee.
\end{remark}

%
\section{Intrinsic Models for Lie Groups}\label{sec:int_lie} 
Now we assume that the manifold $\mathcal{M}$ is in addition a Lie group
with group action
$\circ \colon\mathcal M\times\mathcal M \to\mathcal M$ and unit element \(e\in\mathcal M\).
This means that the group action as well as the mapping $x \mapsto x^{-1}$, $x \in \mathcal M$
are smooth. 
For more information on Lie groups we refer to \cite{GQ17,Rossmann03}. 
In our numerical examples, $\mathbb S^1$ and $\operatorname{SO}(3)$ are Lie groups.

The idea is to set up the different priors by replacing additions and substractions in the Euclidean models
by the group operation. All three priors are defined on the manifold now.

The \textit{left} and \textit{right translation} ${\mathcal L}_x, {\mathcal R}_x:  M\times\mathcal M \to\mathcal M$
with respect
to~\(x \in\mathcal M\) are given by
\[
{\mathcal L}_x(y) \coloneqq x \circ y, \quad
{\mathcal R}_x(y) \coloneqq y \circ x,
\]
respectively and 
\begin{equation}
D{\mathcal L}_x(y)[\xi] = x \circ \xi, \quad
D{\mathcal R}_x(y)[\xi] =  \xi \circ x.
\end{equation}
A metric on a Lie group is called right-invariant
if for all $x,y \in \mathcal{M}$ and all \(\xi,\zeta\in T_y\mathcal M\) it holds
    \begin{align}
      \langle \xi,\zeta\rangle_y &=
      \langle D{\mathcal R}_x(y)[\xi],D{\mathcal R}_x(y)[\zeta]\rangle_{y \circ x}
\end{align}
and similarly for the left-invariant metric.
Therefore a right (left) invariant metric is induced 
by a metric on the tangent space $T_e {\mathcal M}$
which is actually the Lie algebra of $\mathcal{M}$. 
For matrix groups we will use the Frobenius inner product on $T_e {\mathcal M}$.
Every compact Lie group, in particular $\mathbb S^1$ and $\mathrm{SO}(3)$, admit  
a metric which is both left- and right-invariant, i.e. they have a bi-invariant metric. 
This is in general not the case as the example of
Euclidean transformation group $\mathrm{SE}(n)$, $n \ge 2$ shows, see~\cite{APA06}.
In this section, we restrict our attention to manifolds $\MM$ having a right-invariant metric.
Then we have for the distance function on $\mathcal{M}$,
\begin{align}\label{left}
\distM(x,y) &= \distM(x \circ y^{-1},e) = \distM(e,y \circ x^{-1}).
\end{align}
This distance function is used in the
the data term in \eqref{data_mani}.
Replacing substractions by appropriate group operations, we can define forward and backward ``differences'' in $x$-direction
in the Lie group as
\begin{align}\label{not:1st}
(D^{\mathrm {Lie}}_x u)_{i} \coloneqq
\begin{cases}
u_{i+(1,0)} \circ u_i^{-1}& \mathrm{if} \;  i+(1,0) \in \grid, \\
e & \mathrm{otherwise},
\end{cases}
\end{align}
and
\begin{align}
(\widetilde D^{\mathrm {Lie}}_x u)_i \coloneqq
\begin{cases}
u_i \circ u_{i-(1,0)}^{-1}& \mathrm{if} \  i \pm (1,0) \in \grid, \\
e & \mathrm{otherwise}.
\end{cases}
\end{align}
and similarly in $y$-direction.
Then we see for the manifold-valued TV term \eqref{TV_mani} by \eqref{left} that
\begin{align} \label{TV_mani_a}
&\TV^{\mathrm {Lie}} (u)
\coloneqq
\TV^{\mathrm {int}} (u)\\
&= 
\sum_{i \in\grid}
\big(
\dist^2 \left( (D^{\mathrm {Lie}}_x u)_{i},e \right)
+ \dist^2 \left( (D^{\mathrm {Lie}}_y u)_{i},e \right)
\big)^\frac{1}{2}.
\end{align}
Furthermore, second order differences on Lie groups resemble 
the concatenation of forward and backward operations the Euclidean case, e.g.,
\\
$(D_{xx}u)_i = \left( u_{i+(1,0)} - u_i \right) - \left( u_{i} - u_{i+(1,0)} \right)$
by 
\begin{align}\label{not:2ndmani}
&(D^{\mathrm {Lie}}_{xx} u)_i \\
&\coloneqq
\begin{cases}
u_{i+(1,0)} \circ u_i^{-1} \circ u_{i-(1,0)} \circ u_i^{-1} & \mathrm{if} \; i \pm (1,0) \in \grid, \\
e & \mathrm{otherwise,}
\end{cases}
\end{align}
and in mixed directions
\begin{equation}\label{dxymani}
\begin{split}
&(D^{\mathrm {Lie}}_{xy} u)_i  \\
&\coloneqq
\begin{cases}
u_{i+(1,0)} \circ u_i^{-1} \circ u_{i-(0,1)} \circ u_{i + (1,-1)}^{-1} 
& \mathrm{if} \ i \pm (0,1) \wedge\\&\phantom{\mathrm{if }} 
i  + (1,0)\in \grid, \\
e & \mathrm{otherwise},
\end{cases}
\end{split}
\end{equation}
and similarly for $D^{\mathrm {Lie}}_{yy}$ and $D^{\mathrm {Lie}}_{yx}$.
Then, with
\begin{equation}
(\mathrm{d}_{*}^{\mathrm{Lie}} u)^2_{i}
\coloneqq \dist \left( (D^{\mathrm{Lie}}_{*} u)_i,e \right), \quad * \in \{xx,yy,xy,yx\},
\end{equation}
we define
\begin{align}
\mathrm{TV}_2^{\mathrm{Lie}}(u)
\coloneqq 
\sum_{i \in\grid}
\bigl(&
(\mathrm{d}_{xx}^{\mathrm{Lie}} u)^2_{i} + (\mathrm{d}_{yy}^{\mathrm{Lie}} u)^2_{i} \\
&+ 
(\mathrm{d}_{xy}^{\mathrm{Lie}} u)^2_{i}  + (\mathrm{d}_{yx}^{\mathrm{Lie}} u)^2_{i}
\bigr)^\frac{1}{2}.
\end{align}

Now the additive and IC prior on  Lie groups can be introduced as follows:
\\[1ex]
\textbf{Additive Coupling}
	\begin{equation*}
	\TV_{1 \wedge 2}^{\mathrm{Lie}}(u)
	\coloneqq
		\beta \TV^{\mathrm{Lie}}(u) + (1-\beta) \TV_2^{\mathrm{Lie}}(u).
	\end{equation*}
\textbf{Infimal Convolution}
	\begin{align}
	\IC^{\mathrm{Lie}}(u)
	&\coloneqq
	\inf_{u=v\circ w} \{
	\beta\TV^{\mathrm{Lie}}(v) + (1-\beta)\TV_2^{\mathrm{Lie}}(w)\}.
	\end{align}
Again, we are interested in the splitting model:	
\\[1ex]
\textbf{IC Decomposition} ($u= v\circ w$)
		\begin{align} \label{mod_lie_ic}
		E_{\IC}^{\mathrm{Lie}}(v,w)
	&\coloneqq
	\frac{1}{2}\pdist^2(f,v\circ w)
	\\&\quad+ \alpha\bigl(\beta
	\TV^{\mathrm{Lie}}(v) + (1-\beta)\TV_2^{\mathrm{Lie}}(w)\bigr).
	\end{align}
	
Since we just apply group operations to define ,,differences''
the TGV prior is also defined on the Lie group by
\\[1ex]
\textbf{Total Generalized Variation}
	\begin{align}\label{mod_lie_tgv}
	&\TGV^{\mathrm{Lie}}(u)\\
	\coloneqq&
	\inf_{a=(a_1,a_2)} 
	\Bigl\{ \beta 
	\big(
	\pdist (D_x^{\mathrm {Lie}} u, a_1 )^2
	+
	\pdist (D_y^{\mathrm {Lie}} u, a_2)^2
	\big)^\frac{1}{2} \\
	&+ (1-\beta) 
	\big(
	\pdist (\widetilde D_x^{\mathrm {Lie}} a_1,e)^2
	+ 
	\pdist (\widetilde D_y^{\mathrm {Lie}} a_2,e)^2
	\\&\qquad\qquad+ 
	\pdist (\widetilde D_y^{\mathrm {Lie}} a_1,e)^2
	+ 
	\pdist (\widetilde D_x^{\mathrm {Lie}} a_2,e)^2
	\big)^\frac{1}{2}
	\Bigr\}.	
	\end{align}
	Actually, we are interested in the following decomposition:
	\\[1ex]
	\textbf{TGV Decomposition} ($(D^{\mathrm {Lie}}_x u, D^{\mathrm {Lie}}_y u)^\tT = (\tilde a_k \circ a_k)_{k=1}^2$)
	\begin{align}\label{prior_tgv_lie}
	&E_{\TGV}^{\mathrm{Lie}}(u,a)
	\coloneqq
	\frac{1}{2}\pdist^2(f,u)
	      \\
	      &
	+ \alpha 
	\Big(
	\beta 
	\big(
	\pdist (D_x^{\mathrm {Lie}} u, a_1 )^2
	+
	\pdist (D_y^{\mathrm {Lie}} u, a_2)^2
	\big)^\frac{1}{2} \\
	&+ (1-\beta)
	\big(
	\pdist (\widetilde D_x^{\mathrm {Lie}} a_1,e)^2
	+ 
	\pdist (\widetilde D_y^{\mathrm {Lie}} a_2,e)^2\\
	&\qquad\qquad+\pdist (\widetilde D_y^{\mathrm {Lie}} a_1,e)^2
	+ 
	\pdist (\widetilde D_x^{\mathrm {Lie}} a_2,e)^2
	\big)^\frac{1}{2}
	\Big)
	\end{align}

\begin{figure*}\centering
\begin{subfigure}{.49\textwidth}\centering
    \includegraphics{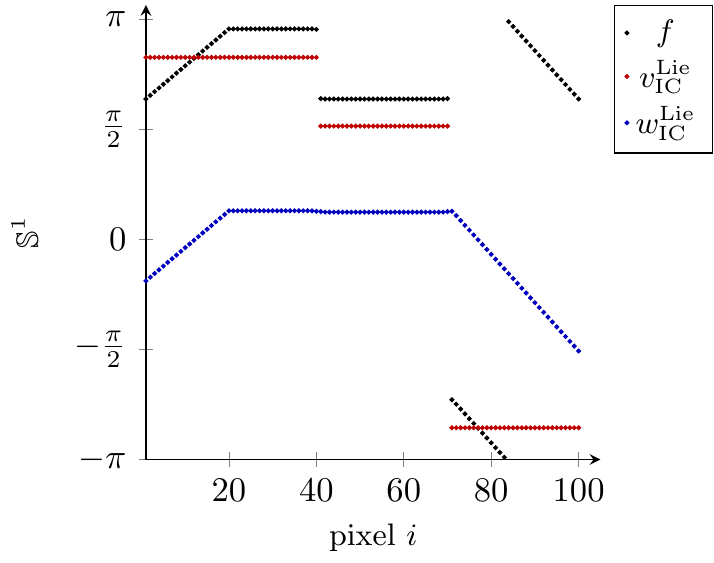}
		\caption{Intrinsic data representation.}\label{ictgv:fig:int:ic:para}
\end{subfigure}
\begin{subfigure}{.49\textwidth}\centering
  \includegraphics{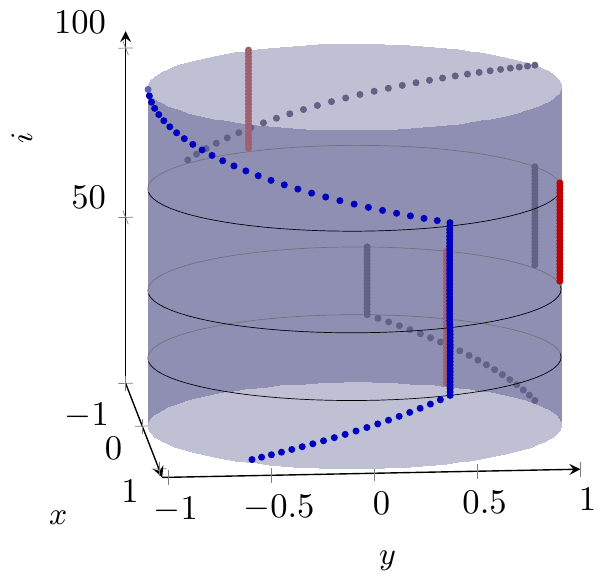}
	\caption{Embedded signals.}\label{ictgv:fig:int:ic:emb}  
\end{subfigure}
\caption{\textbf{Lie group IC model}: Decomposition of \(f\) from Fig.~\ref{fig:ex:ic}. 
(a) Original piecewise geodesic signal $f$ (black) 
  determined by its angles in $[-\pi,\pi)$, the piecewise constant component $v^{\textrm{Lie}}_{\textrm{IC}}$ (red), 
  and piecewise geodesic component $w^{\textrm{Lie}}_{\textrm{IC}}$ (blue).
	the piecewise constant component $v^{\textrm{Lie}}_{\textrm{IC}}$ (red), 
	and piecewise geodesic component $w^{\textrm{Lie}}_{\textrm{IC}}$ (blue).
	In contrast to the Midpoint IC model, the decomposed parts have other slopes and jump heights.
	(b) Same signals in the embedding space $\R^2$.}\label{fig:int:ic}
\end{figure*}

\begin{example}\label{ex:S1Signal:LieIC}
First, we apply the Lie group IC model~\eqref{mod_lie_ic}
with parameters $\alpha = 0.001$, $\beta = \frac23$ to the signal $f$ in Fig.~\ref{fig:ex:ic}.
The result is shown in Fig. \ref{fig:int:ic}.
As expected, the piecewise constant part~\(v^{\textrm{Lie}}_{\textrm{IC}}\)
contains the jumps and the second order 
component~\(w^{\textrm{Lie}}_{\textrm{IC}}\)
is piecewise geodesic.
Due to the construction, the main difference to the result of the Midpoint IC model 
is the slope of the piecewise geodesic parts and 
the jump height  of the piecewise constant part.

Next, we apply the Lie group TGV model~\eqref{mod_lie_tgv} with parameters $\alpha = 0.001$, $\beta = \frac23$.
The result is depicted in Fig. \ref{fig:both_TGV} right.
Interestingly, comparing~\(\nu^{\textrm{P}}_{\textrm{TGV}}\) with \(a^{\textrm{Lie}}_{\textrm{TGV}}\),
we do not see a difference in the plot.
\end{example}

\section{Gradient Descent for the Intrinsic Models}\label{sec:ini_alg}
%
To compute critical points of the intrinsic models we apply
gradient descent algorithms. 
To make the priors differentiable,
we have to add a small positive value $\varepsilon^2 \ll 1$ within the square roots
appearing in $\TV^*$, $\TV_2^*$ and $\TGV^*$, $* \in \{\mathrm{int}, \mathrm{Lie}\}$.
For the intrinsic IC models $E_{\IC}^{\mathrm{int}}$, $E_{\IC}^{\mathrm{Lie}}$ defined on $(\MM^N)^2$
and $E_{\TGV}^{\mathrm{Lie}}$ defined on $(\MM^N)^3$
we apply the gradient descent Algorithm ~\ref{alg:grad_desc_ic}. We use the notation
\begin{equation}
\mathbf{M} \coloneqq (\mathcal{M}^{N})^s, \quad s \in \{2,3\}.
\end{equation}
For the $E_{\TGV}^{\mathrm{int}}$ model which is defined on the manifold and the tangent bundle, 
we propose Algorithm~\ref{ictgv:alg:grad_desc_tgv}.

\begin{algorithm}[t]
	\caption[]{Gradient Descent for \\
	$E \colon \mathbf{M} \to\R$
	from $\{E_{\IC}^{\mathrm{int}}, E_{\IC}^{\mathrm{Lie}}, E_{\TGV}^{\mathrm{Lie}} \}$
	}
	\label{alg:grad_desc_ic}
	\begin{algorithmic}
		\State \textbf{Input:}
		 $p^{(0)}\in \mathbf{M}$; $c,  \sigma> 0;\ \rho\in(0,1)$
		\State \textbf{Output:} $\hat p \in \mathbf{M}$
		\State $r=0$
		\Repeat
		\State
		Choose the smallest \( l\in\N \) fulfilling the  Armijo condition	
		\begin{equation}\label{eq:armijo}
		 \qquad   E(p^{(r)})
		    - c \sigma \rho^l\lVert
		    \grad_{\mathbf{M}} E (p^{(r)})
		    \rVert^2
		    \ge E
		    (p_l^{(r)})
		\end{equation}	
		\State with 
		\begin{equation}
		 \qquad   p_l^{(r)} \coloneqq
		    \exp_{p^{(r)}} \bigl(-\rho^l \sigma \grad_{\mathbf{M}}
		    E (p^{(r)})\bigr)
		\end{equation}
		\State Set
		\begin{equation}\label{eq:graddes}
		\qquad p^{(r+1)} \coloneqq \exp_{p^{(r)}}(
		-\rho^l\sigma \grad_{\mathbf{M}} E(p^{(r)}))
		\end{equation}
		\State $r\gets r+1$;
		\Until a stopping criterion is reached;
		\State $\hat{p} = p^{(r)}$;
	\end{algorithmic}
\end{algorithm}

\begin{algorithm}[t]
	\caption[]{Gradient Descent for $E \coloneqq E_{\TGV}^{\mathrm{int}}$}
	\label{ictgv:alg:grad_desc_tgv}
	\begin{algorithmic}
		\State \textbf{Input:}
		$u^{(0)}\in\MM^{N};\ \xi^{(0)}\in (T_{u^{(0)}}\MM^N)^2$;
		\State 
		\phantom{\textbf{Input:}} $\ \sigma,c> 0;\ \rho\in(0,1)$
		\State \textbf{Output:} 
		$\hat{u}\in\MM^{N}, \ \hat \xi \in (T_{\hat u} \MM^N)^2$
		\State $r=0$;
		\Repeat
		\State Compute
		\begin{align}
		\qquad b^{(r)} &\coloneqq -\grad_{(T\MM^N)^2,\xi} \bigl(E (u^{(r)},\cdot) \bigr)(\xi^{(r)})\\
		v^{(r)} &\coloneqq -\grad_{\MM^N,u} \bigl(E(\cdot,\xi^{(r)}) \bigr)(u^{(r)})
		\end{align}		
		\State Choose the smallest \( l\in\mathbb N\) fulfilling the Armijo condition
		\begin{align}
		\qquad & E \bigl(u^{(r,l)},\xi^{(r,l)})\bigr) \le E\bigl(u^{(r)},v^{(r)}\bigr)\\
		& - c\sigma\rho^l\bigl(\lVert v^{(r)}\rVert_{u^{(r)}}^{2}+\lVert b_1^{(r)}\rVert^2_{u^{(r)}}+b_2^{(r)}\rVert^2_{u^{(r)}}\bigr)
		\end{align}
		\State with
		\begin{align}
		\qquad u^{(r,l)} &\coloneqq \exp_{u^{(r)}}(\sigma\rho^l v^{(r)})\\
		\xi_1^{(r,l)} &\coloneqq P_{u^{(r)}\to u^{(r,l)}}\bigl(\xi_1^{(r)} +\sigma\rho^l b_1^{(r)}\bigr)\\
		\xi_2^{(r,l)} &\coloneqq P_{u^{(r)}\to u^{(r,l)}}\bigl(\xi_2^{(r)} +\sigma\rho^l b_2^{(r)}\bigr)
		\end{align}
		\State Set 
		\begin{align}
		\qquad
		u^{(r+1)} &\coloneqq u^{(r,l)}\\
		\xi^{(r+1)}&\coloneqq \xi^{(r,l)}
		\end{align}	
		\State $r\gets r+1$
		\Until a stopping criterion is reached;
		\State $(\hat{u}, \hat \xi) \coloneqq (u^{(r)}, \xi^{(r)})$
	\end{algorithmic}
\end{algorithm}

\begin{proposition}
\begin{enumerate}[label = \normalfont\roman*)]
 \item
	For any of the functionals $E_{\IC}^{\mathrm{int}}$, $E_{\IC}^{\mathrm{Lie}}$, and $E_{\TGV}^{\mathrm{Lie}}$, every accumulation point of the sequence $(p^{(r)})_{r\in\N}$ generated
	by Algorithm \ref{alg:grad_desc_ic}  is a critical point.
 \item
    For the functional $E^{\mathrm{int}}_{\TGV}$,
	every accumulation point of the sequence $(u^{(r)},\xi^{(r)})_{r\in\mathbb N}$ generated
	by Algorithm \ref{ictgv:alg:grad_desc_tgv}  is a critical point.
	\end{enumerate}
\end{proposition}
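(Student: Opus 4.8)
The plan is to recognize that both algorithms are instances of the Riemannian gradient descent method with Armijo line search, possibly in an alternating/block form, and then invoke the standard convergence theory for such methods. The essential hypotheses one needs are: (i) the objective $E$ is continuously differentiable (which is why the $\varepsilon^2$-smoothing inside all the square roots is introduced — without it the terms $\|\cdot\|_{u_i}$ and the $\mathrm d_2$, $\mathrm d_{1,1}$ expressions are only locally Lipschitz, not $C^1$); (ii) the exponential map is used as a retraction, and for the compact manifolds $\mathbb S^d$, $\operatorname{SO}(3)$ it is globally defined and smooth, while for $\SPD r$ one works on a complete Hadamard manifold so $\exp$ is again a global diffeomorphism; and (iii) the iterates stay in a set on which the gradient is uniformly continuous. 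For the compact cases (iii) is automatic; for $\SPD r$ one argues that the sublevel set $\{E\le E(p^{(0)})\}$ is bounded — the data term $\frac12\pdist^2(f,\cdot)$ is coercive on $\MM^N$ — hence contained in a compact subset of the open cone, so all the smoothed prior terms and their gradients are uniformly continuous there.

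First I would treat part~(i). Algorithm~\ref{alg:grad_desc_ic} is literally Riemannian steepest descent on the product manifold $\mathbf M=(\MM^N)^s$ with the Armijo step-size rule \eqref{eq:armijo}. I would cite the standard result (e.g.\ Absil--Mahony--Sepulchre, or the references the paper already uses for gradient methods on manifolds) stating that if $E\in C^1(\mathbf M)$, the line search satisfies the Armijo condition with backtracking factor $\rho\in(0,1)$, and the iterates lie in a set where $\grad E$ is uniformly continuous (true by compactness of $\MM$ in the $\mathbb S^d$, $\operatorname{SO}(3)$ cases, and by the coercivity/sublevel-set argument above for $\SPD r$), then $\grad_{\mathbf M}E(p^{(r)})\to 0$ along the sequence, and consequently every accumulation point $\hat p$ satisfies $\grad_{\mathbf M}E(\hat p)=0$, i.e.\ it is a critical point. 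The only thing to verify concretely is that the Armijo backtracking terminates after finitely many steps $l$ at each iteration — this follows from the differentiability of $E$ and the fact that $t\mapsto E(\exp_{p^{(r)}}(-t\,\grad E(p^{(r)})))$ has derivative $-\|\grad E(p^{(r)})\|^2<0$ at $t=0$ whenever $\grad E(p^{(r)})\neq0$ — and that the differentials of all the building blocks (exponential/logarithmic maps, geodesic evaluations, the pole ladder) are available in closed form by Lemma~\ref{pre:differentials}, so $E$ is genuinely $C^1$ after smoothing.

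For part~(ii), Algorithm~\ref{ictgv:alg:grad_desc_tgv} is slightly different because the variable $\xi$ lives in the tangent bundle over the current $u$: one updates $u$ by an exponential step and simultaneously parallel-transports the $\xi$-update to the new base point. I would reinterpret this as gradient descent on a single smooth manifold, namely the (total space of the) bundle $\{(u,\xi): u\in\MM^N,\ \xi\in(T_u\MM^N)^2\}$, which is itself a smooth manifold of dimension $3dN$; the combined map $(u,\xi)\mapsto(\exp_u(\sigma\rho^l v),\ P_{u\to\cdot}(\xi+\sigma\rho^l b))$ is a legitimate retraction on this bundle, and $(v,b)$ is (minus) the Riemannian gradient of $E$ with respect to the natural metric there. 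With this identification the same steepest-descent-with-Armijo convergence theorem applies verbatim, giving $\|v^{(r)}\|^2+\|b_1^{(r)}\|^2+\|b_2^{(r)}\|^2\to0$ and hence criticality of every accumulation point. The main obstacle — and the one place the proof needs genuine care rather than citation — is establishing that $E_{\TGV}^{\mathrm{int}}$ is $C^1$ on that bundle: the second TGV term involves the pole-ladder parallel transport $P^{\mathrm P}_{u_{i-1}\to u_i}$, whose differential with respect to the base points $u_{i-1},u_i$ is a composition of the maps in Lemma~\ref{pre:differentials} (iii)--(v) and the exponential/logarithm differentials, so one must check that these compose to a continuous differential and that the combined retraction is smooth; once that is in hand, plus the sublevel-set compactness argument for $\SPD r$, the convergence conclusion is standard.
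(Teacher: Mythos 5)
Your proposal is correct and follows essentially the same route as the paper: the paper simply invokes the standard convergence theorem for Riemannian gradient descent with Armijo step size (Theorem~4.3.1 in Absil--Mahony--Sepulchre) for part~(i), and for part~(ii) likewise recognizes Algorithm~\ref{ictgv:alg:grad_desc_tgv} as an Armijo descent scheme on $\MM^N\times(T\MM^N)^2$ before citing the same theorem. Your additional verifications (smoothness after $\varepsilon$-relaxation, sublevel-set compactness for $\SPD{r}$, finite termination of the backtracking) are details the paper leaves implicit but do not change the argument.
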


\begin{proof}
Part i) follows by \cite[Theorem~4.3.1]{AMS08}.

Concerning ii) we recognize that Algorithm~\ref{ictgv:alg:grad_desc_tgv} 
is a descent algorithm on $\MM^N\times (T\MM^N)^2$~\cite{Rent11,Sas1958} 
with an Armijo step size rule. Hence the assumption follows by~\cite[Theorem~4.3.1]{AMS08}. \hfill $\Box$
\end{proof}

To obtain the gradients in Algorithm~\ref{alg:grad_desc_ic} and~\ref{ictgv:alg:grad_desc_tgv}
we have to compute the gradients of all involved summands.
In the rest of this section, we sketch their computation.
We restrict our attention to symmetric Riemannian manifolds and Lie groups with bi-invariant metric.
for a detailed treatment of all involved Riemannian gradients we refer to ~\cite{persch2018}.

The gradients 
of the data term $\mathcal{E}^{\mathrm{int}}_{\mathrm{data}}$ in \eqref{data_mani}
and 
the smoothed terms $\TV^{\mathrm{int}} = \TV^{\mathrm{Lie}}$ in \eqref{TV_mani} can be obtained
by the chain rule and
\begin{equation}
\grad_{\mathcal{M}} \dist^2(\cdot,y) (x) = -2 \log_x y. \label{eq:der:dsq}
\end{equation}
For $\TV_2^{\mathrm{int}}$ in \eqref{TV2_mani} we can apply the
the gradient computations of $\mathrm{d}_*^{\mathrm{int}}$, $* \in \{xx,yy,xy,yx\}$, 
detailed in \cite{BBSW16} together with the chain rule.
The gradient of \\
$\dist^2(\geodesic{v_i,w_i}(\tfrac{1}{2}),f_i)$
in the data term of $E_{\IC}^{\mathrm{int}}$ in \eqref{eq:midpointorig}	
follows by the chain rule and Lemma~\ref{pre:differentials} iv) and v).
The gradient of 
$\dist^2(f_i , \cdot \circ w_i)$
in the data term 
of $E_{\IC}^{\mathrm{Lie}}$ in \eqref{mod_lie_ic}
can be obtained by
\begin{align}
	\grad&_{\mathcal{M},v_{i}}\Bigl( \dist^2(f,\cdot\circ w)\Bigr)(v_i) \\
	&= \grad_{\mathcal{M},v_{i}}\Bigl(\dist^2(f\circ w^{-1},\cdot)\Bigr)(v_i)
	\\&=-2 \log_{v_i} (f_i\circ w_i^{-1})
\end{align}
using the right invariance of the geodesic distance.
Similarly, we  compute the gradient with respect to $w$ using the left invariance of the metric.
To get the gradient of $\TV_2^{\mathrm{Lie}}$ we apply the chain rule with the following lemma.

\begin{lemma} \label{lem:grad_lie}
 The gradients of $(\mathrm{d}_{xx}^{\mathrm{Lie}})^2$ and   $(\mathrm{d}_{xy}^{\mathrm{Lie}})^2$
 are given by
 \begin{align}
		&\grad_{\mathcal{M},w_{i}} \bigl( \mathrm{d}_{xx}^{\mathrm{Lie}} \cdot \bigr)^2_{i}(w) = - 2 \big( \\		
		& D {\mathcal L}_ {w_{i-(1,0)}\circ w_{i}^{-1}} [\log_{w_{i}\circ w^{-1}_{i-(1,0)}\circ w_{i}}w_{i+(1,0)}]\\
		&+D {\mathcal R}_{w^{-1}_{i}\circ w_{i-(1,0)}} [\log_{w_{i}\circ w^{-1}_{i-(1,0)}\circ w_{i}}w_{i+(1,0)}] \big)
 \end{align}
	and
 \begin{align}
	\grad_{\mathcal{M},w_{i}} 
	&(\mathrm{d}_{xy}^{\mathrm{Lie}} \cdot)^2_{i}(w)
	\\
	&=-2\log_{w_{i}} (w_{i-(0,1)} \circ w^{-1}_{i + (1,-1)}\circ w_{i+(1,0)}).
	\end{align}
\end{lemma}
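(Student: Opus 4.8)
The plan is to compute each gradient by writing the relevant squared distance as a composition of smooth maps between manifolds and then applying the chain rule together with the elementary identity $\grad_{\MM}\dist^2(\cdot,y)(x)=-2\log_x y$ from \eqref{eq:der:dsq}, the differentials of left and right translation, and — where geodesics enter — the differentials collected in Lemma~\ref{pre:differentials}. Throughout we use that $\MM$ is a Lie group with \emph{bi}-invariant metric, so that both \eqref{left} and its left-invariant analogue hold, the adjoint of $D{\mathcal L}_x(y)$ is $D{\mathcal L}_{x^{-1}}(x\circ y)$, and likewise for right translation; these adjoint formulas are what convert a differential into a gradient.

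For the mixed term $(\mathrm d_{xy}^{\mathrm{Lie}})^2_i$, recall from \eqref{dxymani} that $(D^{\mathrm{Lie}}_{xy}u)_i = u_{i+(1,0)}\circ u_i^{-1}\circ u_{i-(0,1)}\circ u_{i+(1,-1)}^{-1}$, so by \eqref{left} we have $\dist^2\bigl((D^{\mathrm{Lie}}_{xy}w)_i,e\bigr)=\dist^2\bigl(e,\,w_{i+(1,0)}\circ w_i^{-1}\circ w_{i-(0,1)}\circ w_{i+(1,-1)}^{-1}\bigr)$. Using bi-invariance we may freely slide the outer factors to the other argument; writing the displayed product as $w_{i+(1,0)}\circ w_i^{-1}\circ g$ with $g\coloneqq w_{i-(0,1)}\circ w_{i+(1,-1)}^{-1}$ fixed with respect to $w_i$, I would move everything so that $\dist^2$ is taken between $w_i$ and $w_{i-(0,1)}\circ w_{i+(1,-1)}^{-1}\circ w_{i+(1,0)}$ (this is exactly where bi-invariance, i.e. the commutation of left and right translations at the level of the metric, is used). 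Applying \eqref{eq:der:dsq} then gives $\grad_{\MM,w_i}=-2\log_{w_i}\bigl(w_{i-(0,1)}\circ w^{-1}_{i+(1,-1)}\circ w_{i+(1,0)}\bigr)$, which is the claimed formula. The only subtlety is bookkeeping of which translations act on the left and which on the right, and checking that the bi-invariance is genuinely needed (it is, because the point $w_i$ appears sandwiched between $w_{i+(1,0)}$ on its left and the rest on its right).

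For the $x$-direction second difference $(\mathrm d_{xx}^{\mathrm{Lie}})^2_i$, from \eqref{not:2ndmani} we have $(D^{\mathrm{Lie}}_{xx}w)_i = w_{i+(1,0)}\circ w_i^{-1}\circ w_{i-(1,0)}\circ w_i^{-1}$, so now $w_i$ occurs \emph{twice}. The plan is to differentiate $w_i\mapsto \dist^2\bigl(e, F(w_i)\bigr)$ with $F(w_i)\coloneqq w_{i+(1,0)}\circ w_i^{-1}\circ w_{i-(1,0)}\circ w_i^{-1}$ via the product rule for the group: $DF(w_i)[\xi]$ splits into a term from the first $w_i^{-1}$ and a term from the second $w_i^{-1}$, and $D(x\mapsto x^{-1})(w_i)[\xi]=-D{\mathcal L}_{w_i^{-1}}D{\mathcal R}_{w_i^{-1}}\xi$. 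One then rewrites $\dist^2(e,F(w_i))$ using bi-invariance so that the argument becomes $w_i\circ w_{i-(1,0)}^{-1}\circ w_i$ compared against $w_{i+(1,0)}$ (the point whose log appears in the statement is $\log_{w_i\circ w_{i-(1,0)}^{-1}\circ w_i}w_{i+(1,0)}$), applies \eqref{eq:der:dsq} to get the ``inner'' tangent vector, and pushes it back to $T_{w_i}\MM$ through the two occurrences, producing precisely the two summands with $D{\mathcal L}_{w_{i-(1,0)}\circ w_i^{-1}}$ and $D{\mathcal R}_{w_i^{-1}\circ w_{i-(1,0)}}$ in the claim.

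I expect the main obstacle to be exactly this last point: correctly tracking the two contributions coming from the repeated variable $w_i$ in $\mathrm d_{xx}^{\mathrm{Lie}}$, and verifying that after taking adjoints of the various $D{\mathcal L}$, $D{\mathcal R}$ and the inversion map, the two pieces collapse into the stated left- and right-translation terms evaluated at the common tangent vector $\log_{w_i\circ w_{i-(1,0)}^{-1}\circ w_i}w_{i+(1,0)}$. The mixed term $\mathrm d_{xy}^{\mathrm{Lie}}$ is comparatively routine since $w_i$ appears only once; the bi-invariance hypothesis is what makes both computations go through, and I would flag at the start that without a bi-invariant metric (e.g. on $\mathrm{SE}(n)$) these clean formulas fail. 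Routine verifications — smoothness of all maps involved, that the chain rule of the excerpt applies, and the explicit adjoint identities for $D{\mathcal L}_x$, $D{\mathcal R}_x$ — I would state and use without detailed proof, referring to \cite{persch2018} for the full account.
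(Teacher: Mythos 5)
Your proposal is correct and follows essentially the same route as the paper: rewrite each squared distance using the left/right invariance and the inversion invariance $\dist(x,y)=\dist(x^{-1},y^{-1})$ of the bi-invariant metric so that \eqref{eq:der:dsq} applies, handle the single occurrence of $w_i$ directly for $\mathrm{d}_{xy}^{\mathrm{Lie}}$, and for $\mathrm{d}_{xx}^{\mathrm{Lie}}$ sum the two contributions of the repeated variable in $w_i\mapsto w_i\circ w_{i-(1,0)}^{-1}\circ w_i$ and pass to adjoints of $D{\mathcal L}$ and $D{\mathcal R}$. The only cosmetic difference is that the paper isolates the inversion-invariance step explicitly, which you subsume under ``bi-invariance''; that is consistent and correct.
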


The proof is given in Appendix.

The summands in the TGV prior on Lie groups \eqref{prior_tgv_lie}
have a similar structure as those in the Lie group IC model. 
The computation is even simpler since no argument exists twice in one distance term. 
Therefore the gradients can be calculated by isolating the arguments of interest on one side of the distance function 
and then apply the chain rule with \eqref{eq:der:dsq}.

It remains to consider the gradient of the $\TGV^{\mathrm{int}}$ prior in \eqref{mod_mani_tgv_pole}. 
Due to symmetries we may stick to the one-dimensional case. 
Further, we abbreviate the differentials from Lemma~\ref{pre:differentials}  by
	\begin{align}
	E_{x}(u) 
	&\coloneqq D(\exp_{x}\cdot)(\xi)\colon T_x \mathcal{M}\to T_y \mathcal{M}, \ y \coloneqq \exp_x \xi,\\
	\tilde E_{u}(x) 
	&\coloneqq D\left(\exp_{\cdot}(\xi) \right)(x) \colon T_x \mathcal{M}\to T_y \mathcal{M},\\
	L_{x}(y) 
	&\coloneqq D(\log_{x}\cdot)(y)\colon T_y \mathcal{M}\to T_x \mathcal{M},\\
	\tilde L_{y}(x) 
	&\coloneqq D\left(\log_{\cdot}(y) \right) (x)\colon T_x \mathcal{M}\to T_x \mathcal{M},\\
	G_{\cdot,y,\tau}(x) 
	&\coloneqq D\gamma(\cdot,y;\tau)(x) \colon T_x \mathcal{M}\to T_{\geodesic{x,y}(\tau)} \mathcal{M},\\
	G_{x,\cdot, \tau}(y) 
	&\coloneqq D\gamma(x,\cdot;\tau)(y) \colon T_y \mathcal{M}\to T_{\geodesic{x,y}(\tau)} \mathcal{M}.
	\end{align}
Then the gradients can be derived by the chain rule and the following lemma.

\begin{lemma} \label{lem:grad_tgv}
The functions
 \begin{align}
	&F_1  (u_i,u_{i+1},\xi_i)
	\coloneqq \lVert\log_{u_i} u_{i+1}-\xi_i \rVert^2_{u_i},\\
	&F_2(u_i,u_{i-1},\xi_i,\xi_{i-1})
	\coloneqq
	\lVert \xi_i - P_{u_{i-1}\to u_i}^\mathrm{P}(\xi_{i-1})\rVert^2_{u_i}	
 \end{align}
have the Riemannian gradients 
\begin{align} \label{grad_simple_1}
&\grad_{\MM, \xi_i} F_1 (u_i,u_{i+1}, \cdot) (\xi_i) 
= 
2( \xi_i - \log_{u_i} u_{i+1}) \eqqcolon -T,
 \\
&\grad_{\mathcal{M},u_i}F_1(\cdot,u_{i +1},\xi_{i}\bigr)(u_{i}) = \tilde L^*_{u_{i+1}}(u_{i})[T]\\
&\grad_{\mathcal{M},u_{i+1}}F_1(u_{i},\cdot,\xi_{i}\bigr)(u_{i+1}) = L^*_{u_{i}}(u_{i+1})[T],
\end{align}
and 
\begin{align}
\grad&_{\mathcal{M},\xi_{i}} F_2(u_i,u_{i-1},\cdot,\xi_{i-1})(\xi_{i}) 
\\&=
 2(\xi_{i}- P_{u_{i-1}\to u_i}^{\mathrm{P}}(\xi_{i-1}))
\eqqcolon -S ,
\\
\grad&_{\mathcal{M},\xi_{i-1}}\bigl(F_2(u_{i},u_{i -1},\xi_{i},\cdot)\bigr)(\xi_{i-1}) \\
&=
E^*_{u_{i-1}}(\xi_{i-1})
	\Bigl[G^*_{\cdot,c_{i},2}(e_i)
	\bigl[L^*_{u_{i}}(p_{i})[S]\bigr]\Bigr],
	\\
\grad&_{\mathcal{M},u_i} \bigl(F_2(\cdot,u_{i -1},\xi_{i},\xi_{i-1})\bigr)(u_{i}) \\
&=	
	\tilde{L}^*_{p_{i}}(u_{i})[S]	
	    \\
	    &\qquad+G^*_{\cdot,u_{i-1},\frac{1}{2}}(u_{i})
	    \Bigl[G^*_{e_i,\cdot,2}(c_{i})
	    \bigl[L^*_{u_{i}}(p_{i})
	    [S]\bigr]\Bigr]  ,
	    \\
\grad&_{\mathcal{M},u_{i-1}} \bigl( F_2( u_{i},\cdot,\xi_{i},\xi_{i-1}) \bigr)(u_{i -1})\\
&=
\tilde E_{\xi_{i-1}}^*(u_{i-1})
\Bigl[
	    G^*_{\cdot,c_{i},2} (e_{i})
	    \bigl[
	    L^*_{u_{i}}(p_{i})[S]
	    \bigr]
\Bigr]
	    \\
	    &\qquad+ G^*_{u_{i},\cdot,\frac{1}{2}}(u_{i-1})
	    \Bigl[G^*_{e_i,\cdot,2}(c_{i})
	    \bigr[L^*_{u_{i}}(p_{i})[S]\bigr]
	    \Bigr],
\end{align}
where we set
$e_{i} \coloneqq \exp_{u_{i-1}} (\xi_{i-1})$,
$c_{i}  \coloneqq \gamma\bigl(u_{i},u_{i-1};\tfrac{1}{2}\bigr)$ and	
$p_{i} \coloneqq \gamma(e_{i},c_{i};2)$.
\end{lemma}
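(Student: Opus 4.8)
The plan is to compute each Riemannian gradient by identifying the relevant smooth map between manifolds (or tangent bundles), applying the chain rule from Section~\ref{sec:mani}, and then using the adjoint formula~\eqref{pre:eq:adjoint} together with the relation $\grad_{\MM} g(x) = (Dg(x))^*[\cdot]$ whenever $g$ is real-valued. Throughout I would exploit the elementary fact that for a norm-squared functional $h(z) = \lVert \Phi(z) - a\rVert^2$ the outer derivative contributes the tangent vector $2(\Phi(z)-a)$, so that only the differential $D\Phi$ and its adjoint remain to be tracked.

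\textbf{The function $F_1$.}
First I would treat $F_1(u_i,u_{i+1},\xi_i) = \lVert \log_{u_i} u_{i+1} - \xi_i\rVert^2_{u_i}$. Differentiating in $\xi_i$ is immediate since $\xi_i \mapsto \log_{u_i}u_{i+1} - \xi_i$ is affine with derivative $-\mathrm{Id}$, giving $\grad_{\xi_i} F_1 = 2(\xi_i - \log_{u_i}u_{i+1}) =: -T$. For the derivative in $u_{i+1}$, the relevant map is $y \mapsto \log_{u_i}(y)$, i.e.\ case iii) of Lemma~\ref{pre:differentials}; by the chain rule and~\eqref{pre:eq:adjoint} the gradient is $L^*_{u_i}(u_{i+1})[T]$. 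For the derivative in $u_i$ one must differentiate $x \mapsto \log_x(u_{i+1})$, which is case ii) of Lemma~\ref{pre:differentials} (here the basepoint of the norm also varies, but since $\MM$ is symmetric and the metric is parallel along geodesics, the basepoint variation contributes nothing at the critical evaluation — this is the one subtlety worth spelling out carefully); the result is $\tilde L^*_{u_{i+1}}(u_i)[T]$.

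\textbf{The function $F_2$.}
Next I would handle $F_2 = \lVert \xi_i - P^{\mathrm P}_{u_{i-1}\to u_i}(\xi_{i-1})\rVert^2_{u_i}$. Writing out the pole ladder via~\eqref{pre:eq:pole}, $P^{\mathrm P}_{u_{i-1}\to u_i}(\xi_{i-1}) = -\log_{u_i}(p_i)$ with $e_i = \exp_{u_{i-1}}\xi_{i-1}$, $c_i = \gamma(u_i,u_{i-1};\tfrac12)$, $p_i = \gamma(e_i,c_i;2)$, so $F_2$ is a norm-squared of a composition of three elementary maps. The $\xi_i$-derivative is again trivial, giving $-S := 2(\xi_i - P^{\mathrm P}(\xi_{i-1}))$. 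For the remaining three variables the strategy is a systematic chain rule: $S$ is pulled back through $L_{u_i}$ (the map $p \mapsto \log_{u_i}p$, case iii)), then through the geodesic evaluation $\gamma(\cdot,c_i;2)$ or $\gamma(e_i,\cdot;2)$ (cases iv) and v) with $\tau=2$), then, depending on the variable, through $\exp_{u_{i-1}}(\cdot)$ (case vi), for $\xi_{i-1}$), through $\exp_{\cdot}(\xi_{i-1})$ (case i), for $u_{i-1}$), or through the midpoint geodesic $\gamma(u_i,u_{i-1};\tfrac12)$ in its first or second slot (cases iv), v) with $\tau=\tfrac12$, for $u_i$ and $u_{i-1}$ respectively). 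Each path is summed, and for $u_i$ one additionally picks up the term $\tilde L^*_{p_i}(u_i)[S]$ coming from $u_i$ appearing as the basepoint of $\log$; for $u_i$ and $u_{i-1}$ the variable appears in two places (in $c_i$ and as basepoint or in $e_i$), which is why the stated gradients are sums of two contributions. Applying~\eqref{pre:eq:adjoint} to transpose each differential yields exactly the displayed formulas.

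\textbf{Main obstacle.}
The routine part is bookkeeping the chain of adjoints; the genuinely delicate point is the $u_i$-dependence of the \emph{metric} $\langle\cdot,\cdot\rangle_{u_i}$ in which the norm is taken, since naively this would add a term from differentiating the metric tensor. The resolution is that, at any configuration, the inner expression $\xi_i - P^{\mathrm P}(\xi_{i-1})$ is a genuine tangent vector at $u_i$ and the gradient is defined intrinsically; because all our manifolds are symmetric, parallel transport is an isometry and the pole ladder is exact, so the ``vertical'' derivative of the metric along the curve of basepoints vanishes in the combinations that appear — this is what makes the clean formulas above hold. I would state this reduction explicitly and otherwise defer the lengthier Jacobi-field computations underlying Lemma~\ref{pre:differentials} to~\cite{persch2018}, as the paper already does.
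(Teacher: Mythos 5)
Your proposal is correct and follows essentially the same route as the paper's proof: write $F_1$ and $F_2$ as norm-squares of compositions of the elementary maps of Lemma~\ref{pre:differentials}, push the outer gradient ($T$ resp.\ $S$) back through the chain of adjoints via \eqref{pre:eq:adjoint}, and collect two contributions for each of $u_i$ and $u_{i-1}$ since they occur twice in the pole ladder; your case assignments (ii)--(vi) all match what the paper uses. Your explicit discussion of the basepoint dependence of the metric corresponds to the paper's brief appeal to metric compatibility of the connection and does not change the argument.
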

The proof is given in the appendix.

\section{Numerical examples} \label{sec:Num}

The gradient descent algorithm and ADMM
are implemented in \textsc{Matlab}. The basic manifold functions, like logarithmic and exponential maps, 
as well as the distance function are implemented as C\texttt{++} functions within the \href{http://www.mathematik.uni-kl.de/imagepro/members/bergmann/mvirt/}{\glqq Manifold-valued 
	Image Restoration Toolbox\grqq (MVIRT)}\footnote{http://www.mathematik.uni-kl.de/imagepro/members/bergmann/mvirt/}
and imported into \textsc{Matlab} using \lstinline!mex!-interfaces with the GCC 4.8.4 compiler.
As a quality measure we use the
mean squared error (MSE) defined by
\begin{equation}
\epsilon\coloneqq
\tfrac{1}{\lvert\grid\rvert}
\sum_{{i} \in\grid}\dist^2(u_{i},u_{0,{i}}),
\end{equation}
where $u_0$ denotes the original image. 
The parameters  in the artificial examples are adapted 
via grid search to minimize this measure~\(\epsilon\). The relaxation parameter
$\varepsilon$ is chosen for each experiment based on the data.\\
The algorithm stops if one of the following criteria is fulfilled
\begin{itemize}
	\item the maximal change is small enough, i.e., 
	\begin{equation}
	\max_{i\in\grid}\{\dist(p_i^{(r)},p_i^{(r-1)})\} < \delta,
	\end{equation}
	with $\delta = 10^{-10}$ for signals and $\delta = 10^{-8}$ for images;
	\item the number of iterations exceeds $c\in\mathbb{N}$, i.e., $r> c$, with $c = 10^{6}$ for signals and $c = 10^{5}$ for images.
\end{itemize}

\subsection{$\mathbb{S}^1$-valued data}
\begin{figure*}\centering
  \begin{subfigure}[t]{.315\textwidth}
	  \includegraphics[width = 0.98\textwidth]{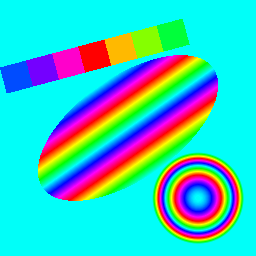}
    \caption{Original image.}\label{subfig:arts1:orig}
  \end{subfigure}
  \begin{subfigure}[t]{.315\textwidth}
	  \includegraphics[width = 0.98\textwidth]{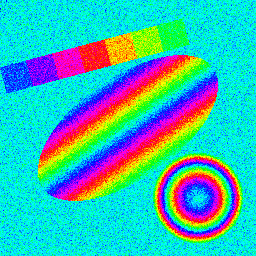}
    \caption{Noisy image, \(\epsilon = 88.5\times10^{-3}\)}
    \label{subfig:arts1:noisy}
  \end{subfigure}
  \begin{subfigure}[t]{.315\textwidth}
	  \includegraphics[width = 0.98\textwidth]{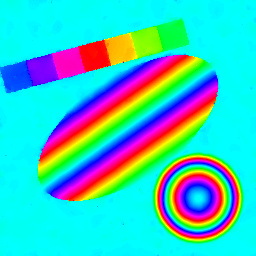}
    \caption{Pole ladder TGV, $\epsilon=2.6\times10^{-3}$.}
    \label{subfig:arts1:pole}
  \end{subfigure}
  \begin{subfigure}[t]{.315\textwidth}
	  \includegraphics[width = 0.98\textwidth]{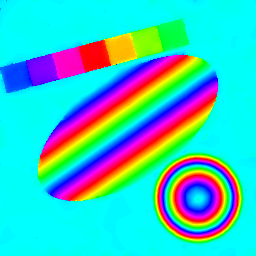}
    \caption{Lie group TGV, $\epsilon=2.5\times10^{-3}$.}
    \label{subfig:arts1:Lie}
  \end{subfigure}
  \begin{subfigure}[t]{.315\textwidth}
	  \includegraphics[width = 0.98\textwidth]{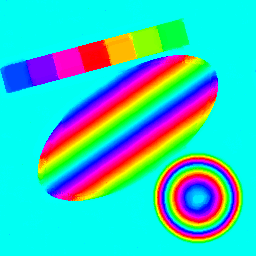}
    \caption{Graph Laplacian~\cite{BT17} $\epsilon=2.6\times10^{-3}$.}
    \label{subfig:arts1:laplace}
  \end{subfigure}
  \begin{subfigure}[t]{.315\textwidth}
    \includegraphics[width = 0.98\textwidth]{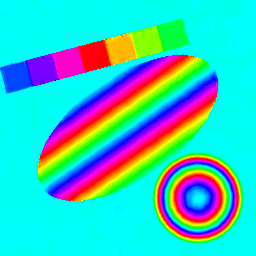}
    \caption{NL-MMSE~\cite{LNPS17}, $\epsilon=2.5\times10^{-3}$.}
    \label{subfig:arts1:NLMSSE}
  \end{subfigure}
	\caption{Denoising of an artificial $\mathbb{S}^1$ image with different methods.}\label{fig:arts1}
\end{figure*}

We start with the $\mathbb S^1$-valued image in Fig.~\ref{fig:arts1}\,\subref{subfig:arts1:orig} from \cite{BLSW14}.
Adding wrapped Gaussian noise results in the corrupted image~\subref{subfig:arts1:noisy}.
In~\cite{BLSW14} the additive $\TV_{1\wedge2}$ yields to an error of $\epsilon = 5.4\times10^{-3}$. Comparing this result to the pole ladder TGV ($\alpha = 1$, $\beta = 0.3$, $\varepsilon = 10^{-3}$), see~\subref{subfig:arts1:pole}
and Lie group TGV ($\alpha = 1$, $\beta = 0.3$, $\varepsilon = 10^{-6}$),
cf.~\subref{subfig:arts1:Lie}, we see that the TGV models yield a smaller error.
These models are able to nicely reconstruct the linear parts in the ellipsoid 
and the edges of the boxes. 
Compared to the nonlocal methods~\cite{LNPS17} and~\cite{BT17}
shown in~\subref{subfig:arts1:laplace} and~\subref{subfig:arts1:NLMSSE}, respectively, the TGV models have nearly the same error.
However, looking at the paraboloid in the bottom right corner,
they outperform the nonlocal methods visually.

\subsection{$\mathbb S^2$-valued data}
\begin{figure*}\centering
	\begin{subfigure}{0.48\textwidth}
		\centering
		\includegraphics[width=.8\textwidth]{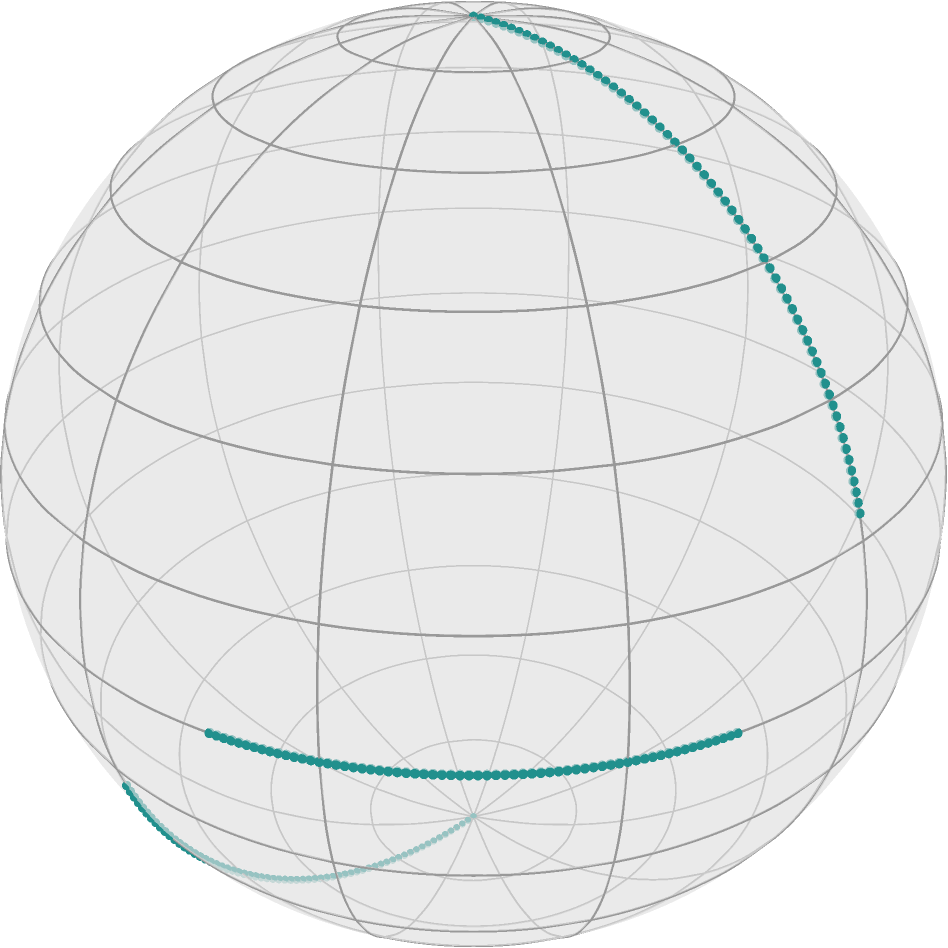}
		\caption{Piecewise geodesic signal.}
		\label{ictgv:fig:S2Decomp:orig}
	\end{subfigure}
	\begin{subfigure}{0.48\textwidth}
	\centering
	\includegraphics[width=.8\textwidth]{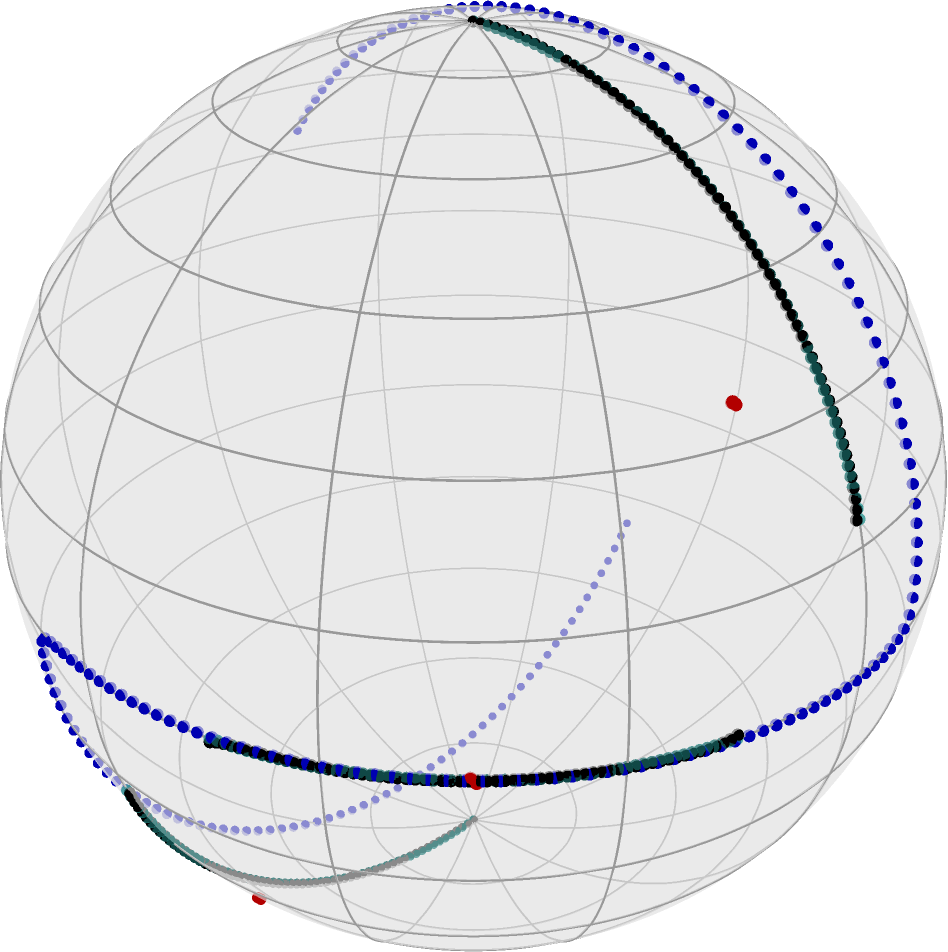}
	\caption{Decomposed signal.}	
	\label{ictgv:fig:S2Decomp:decomp}
	\end{subfigure}
	\caption[]{Midpoint IC decomposition of a piecewise geodesic signal \(f\) (green), 
	into a piecewise constant part \(v\) (red) and a continuous piecewise geodesic curve \(w\) (blue). 
	The mid point signal \(u = \gamma_{\overset{\frown}{v,w}} (\tfrac12)\) (black) nearly reconstructs \(f\) (green).}
	\label{fig:S2Decomp}
\end{figure*}

Now we are interested in $\mathbb S^2$ valued signals, where the Lie group approach cannot be applied.
First, we are interested in the decomposition of a signal.
The ground truth signal in {{ Fig. \ref{fig:S2Decomp}} is obtained as follows:
we take three great arcs from the north pole to the equator, a quarter great arc along the equator, and from thereon further to the south pole. We scale the segments by~\(\frac{1}{5}\), \(\frac{3}{20}\), and~\(\frac{1}{5}\), respectively such that we obtain jumps between the three geodesic segments. 
This yields a signal of length \(192\) shown in Fig.~\ref{fig:S2Decomp}\,\subref{ictgv:fig:S2Decomp:decomp}.
We apply the Midpoint IC model 
with~\(\alpha = \frac{11}{100}, \beta = \frac{1}{11}\).
The result $u$ approximates \(f\) and its decomposition into~\(v\)
and~\(w\) yields signals that are nearly piecewise constant and piecewise
geodesic, respectively.

\begin{figure*}
	\begin{subfigure}{0.24\textwidth}
		\centering
		\includegraphics[width=.98\textwidth]{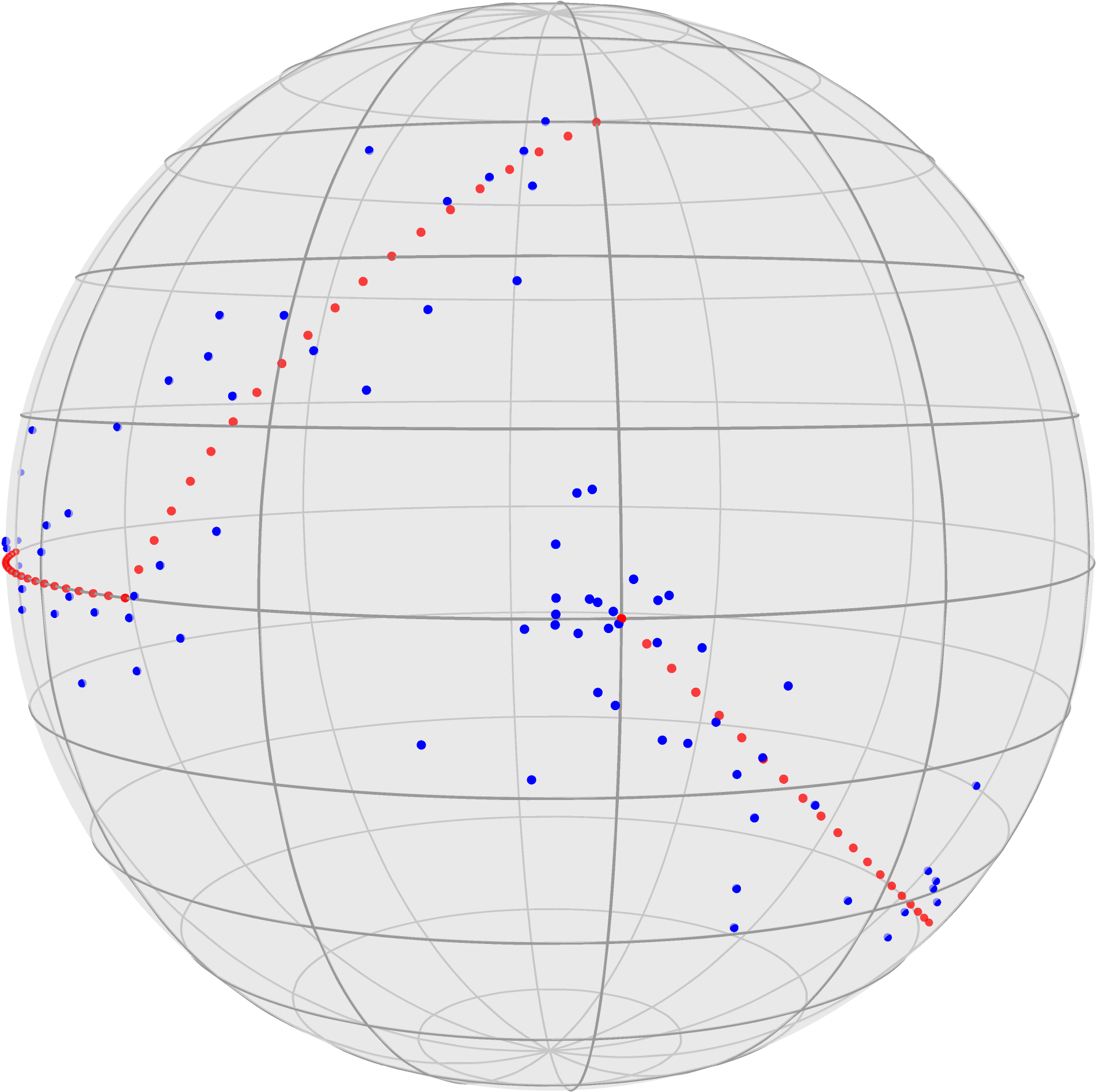}
		\caption{Noisy signal, $\epsilon = 0.0168$.}
		\label{fig:s2tgv:orig}
	\end{subfigure}
	\begin{subfigure}{0.24\textwidth}
		\centering
		\includegraphics[width=.98\textwidth]{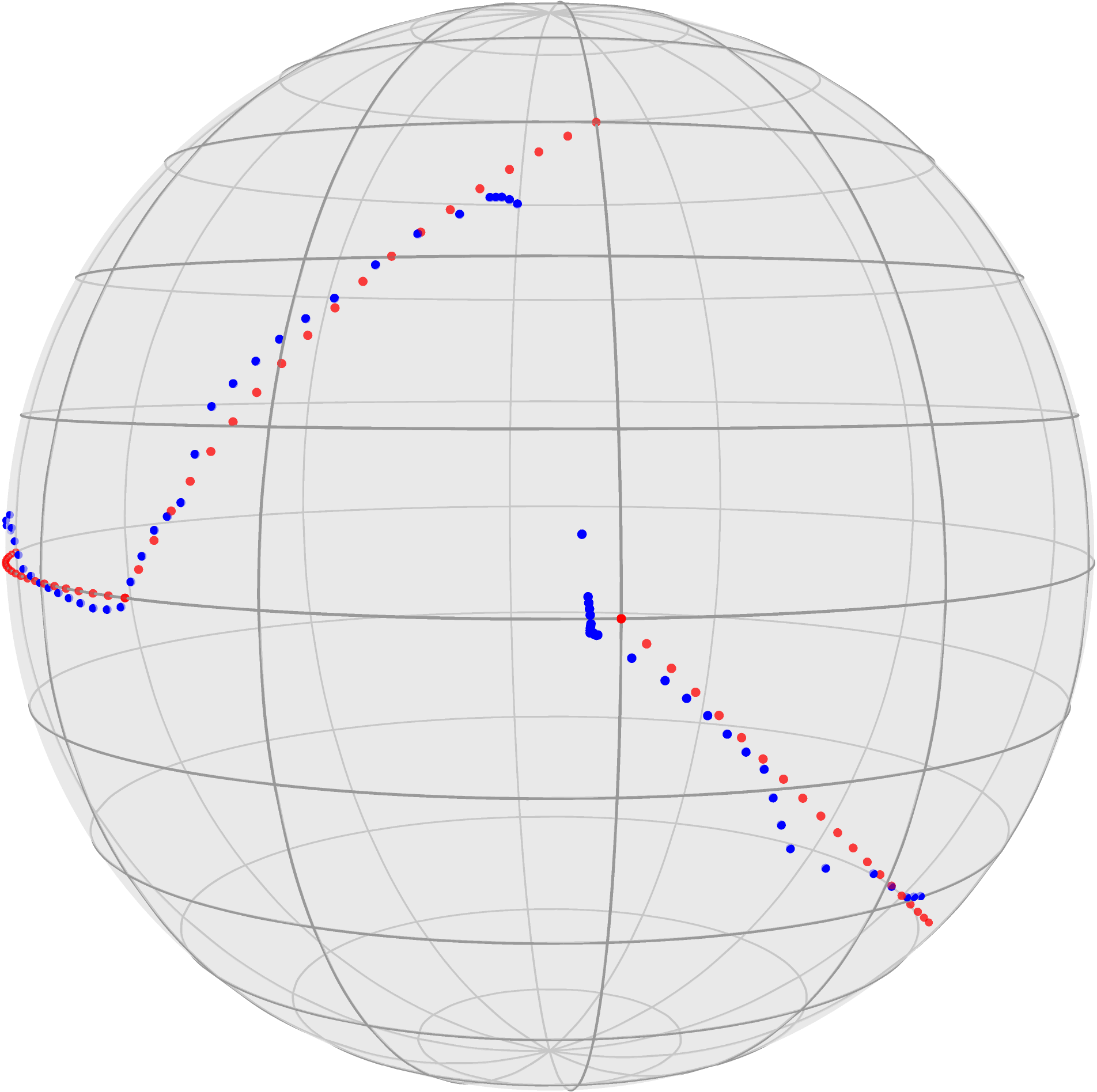}
		\caption{Denoising using ${\mathcal E}^{\textrm{int}}_{\textrm{ADD}}$, $\epsilon = 0.0034 $.}	
		\label{fig:s2tgv:add}
	\end{subfigure}
	\begin{subfigure}{0.24\textwidth}
		\centering
		\includegraphics[width=.98\textwidth]{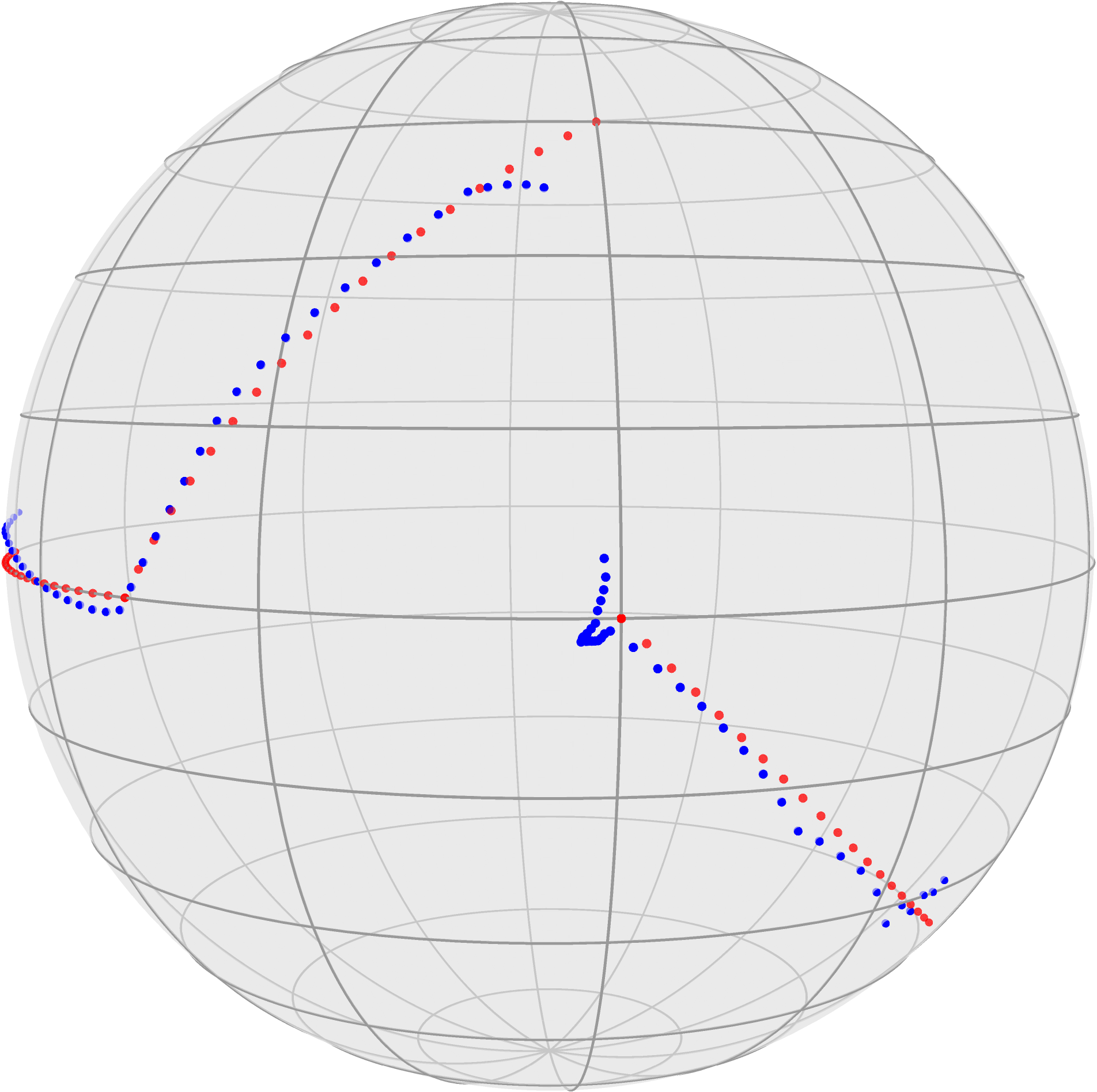}
		\caption{Denoising using $E^{\textrm{int}}_{\IC}$,\\ $\epsilon = 0.0025$.}	
		\label{fig:s2tgv:midic}
	\end{subfigure}
	\begin{subfigure}{0.24\textwidth}
		\centering
		\includegraphics[width=.98\textwidth]{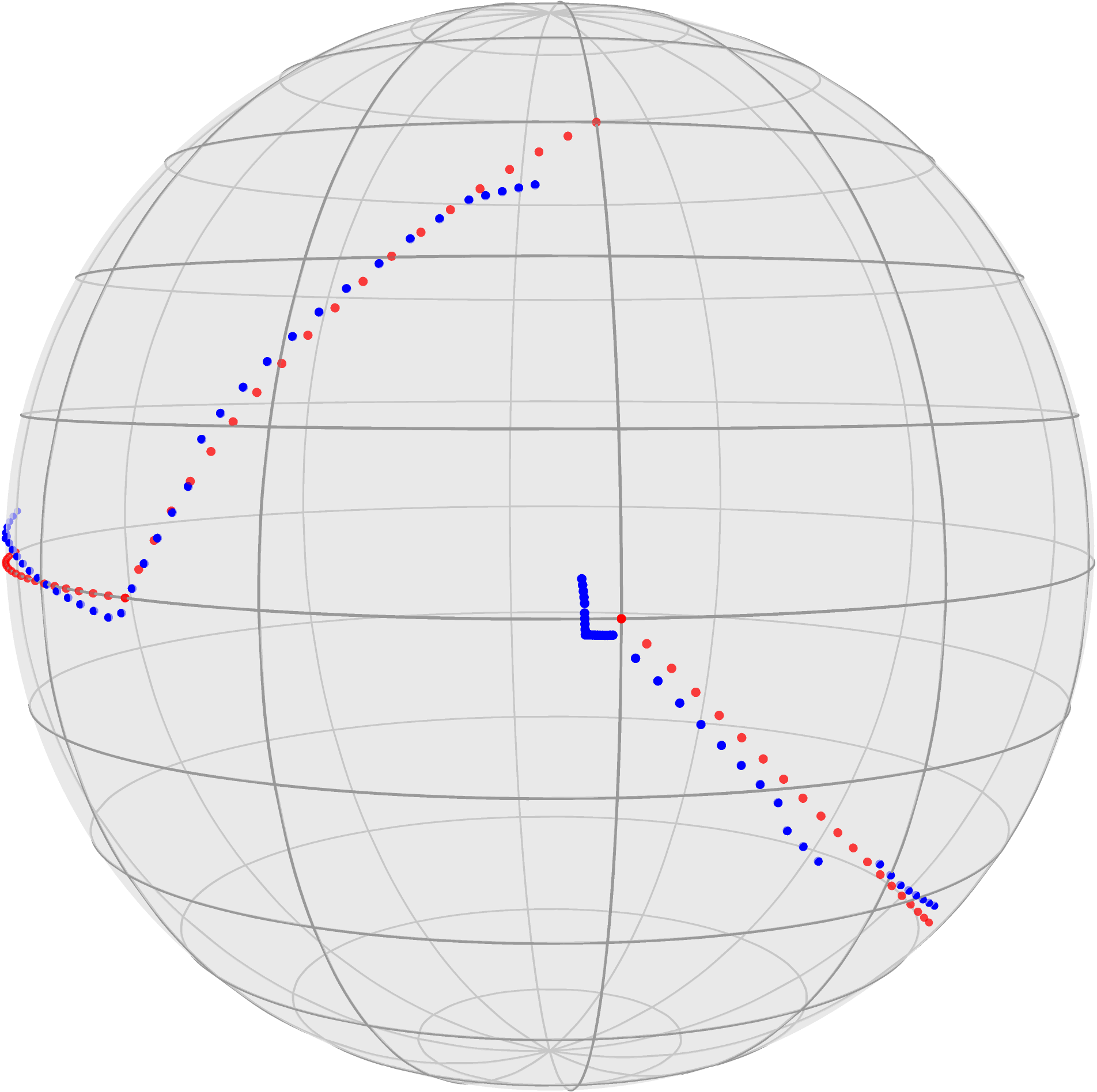}
		\caption{Denoising using $E^{\textrm{P}}_{\TGV}$,\\ $\epsilon = 0.0022$.}	
		\label{fig:s2tgv:pole}
	\end{subfigure}
	\caption[]{Denoising of a $\mathbb{S}^2$-valued signal with additive and TGV priors. 
	The original signal is plotted in red and the noisy/restored signals in blue.}
	\label{fig:s2tgv}
\end{figure*}

Next we present a denoising result.
In Fig.~\ref{fig:s2tgv} we compare intrinsic additive model with the Midpoint $\IC$ and
pole ladder $\TGV$ approach.
The original signal consists of four segments of length 20; 
the first two are geodesic segments,
then there is a jump to a constant segment, and the last segment is again
geodesic. 
Fig.~\ref{fig:s2tgv:orig} shows the original and the corrupted signal with
Gaussian noise ($\sigma = 0.1$). The parameters for the additive model are 
$\alpha\beta = \frac{1}{10}$ and $\alpha(1-\beta)= 4.6$, for the Midpoint IC $\alpha = \frac{1}{2},\ \beta=0.3$, $\varepsilon = 10^{-3}$
and for the pole ladder TGV model $\alpha = \frac{3}{5},\ \beta = 0.3$ and we relax by~\(\varepsilon = 10^{-5}\).
The results are shown in ~\ref{fig:s2tgv} (b) to (d).   
In comparison to the additive model the IC and TGV models preserve the jump better and yield a lower error. Comparing the IC and TGV model we see that the TGV model is more suited to reconstruct the geodesic parts. Hence, it yields the lowest error $\epsilon$.

\subsection{SPD-valued data}

\begin{figure}[tb]
	\begin{align*}
	u_0\colon& \vcenter{\hbox{
			\includegraphics[width =0.4\textwidth]{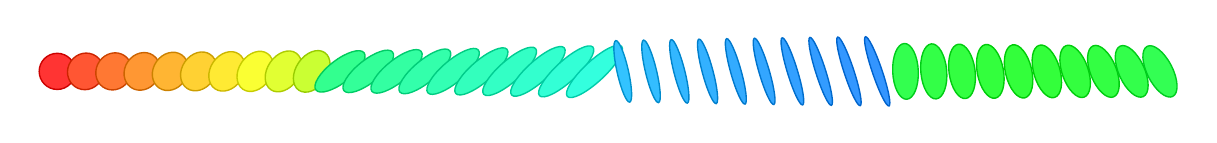}}}
	\\[-.25\baselineskip]
	f\colon& \vcenter{\hbox{
			\includegraphics[width = 0.4\textwidth]{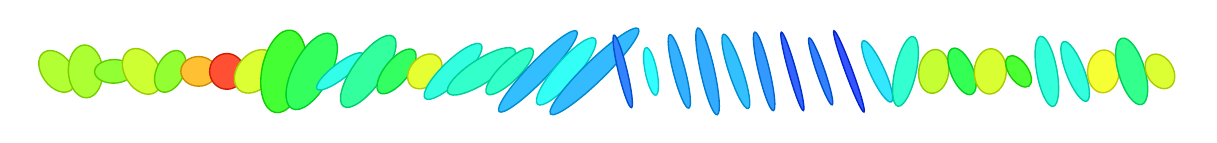}}}
	\\[-.25\baselineskip]
	u_{\mathrm{ADD}}^{\mathrm{int}}\colon& \vcenter{\hbox{
			\includegraphics[width = 0.4\textwidth]{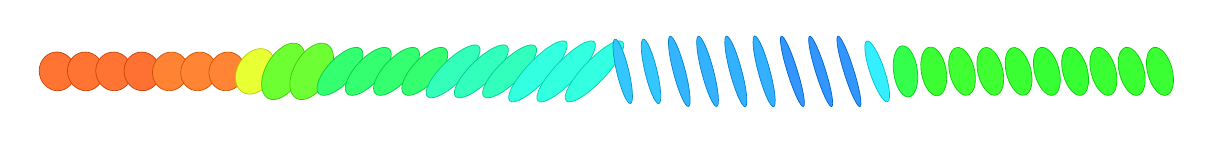}}}
	\\[-.25\baselineskip]
	u_{\mathrm{TGV}}^{\mathrm{int}}\colon& \vcenter{\hbox{
			\includegraphics[width = 0.4\textwidth]{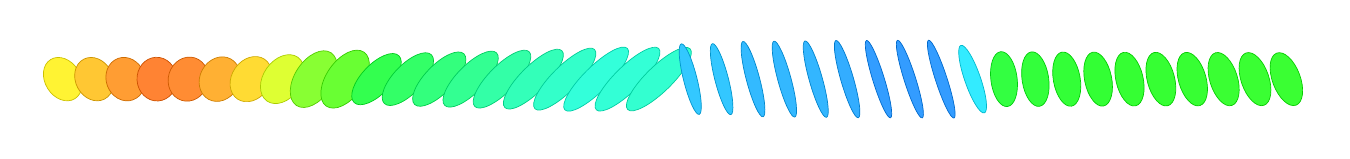}}}
	\\[-.25\baselineskip]
	u_{\IC}^{\mathrm{int}}\colon& \vcenter{\hbox{
			\includegraphics[width = 0.4\textwidth]{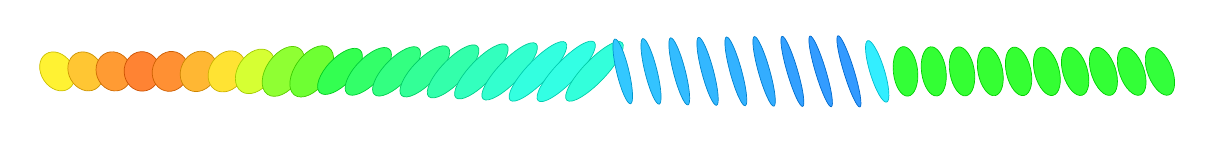}}}
	\\[-.25\baselineskip]
	v_{\IC}^{\mathrm{int}}\colon& \vcenter{\hbox{
			\includegraphics[width = 0.4\textwidth]{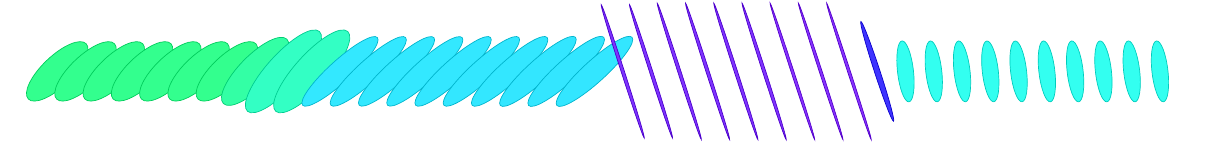}}}
	\\[-.25\baselineskip]
	w_{\IC}^{\mathrm{int}}\colon& \vcenter{\hbox{
			\includegraphics[width = 0.4\textwidth]{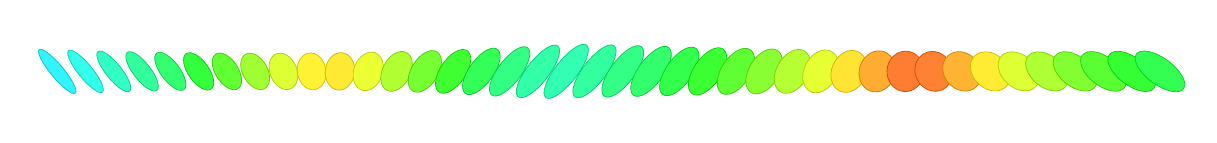}}}
	\end{align*}
	\caption{Denoising and decomposition of a~\(\SPD{2}\) valued signal $f$. 
	Denoising result $u_{\mathrm{ADD}}^{\mathrm{int}}$ 
	by additive model with prior~$\operatorname{TV}_{1 \wedge 2}^{\mathrm{int}}$, $\epsilon = 0.0316$, with the TGV model, $\epsilon = 0.0259$, 
	and $u_{\IC}^{\mathrm{int}}$ by Midpoint IC model,  $\epsilon = 0.0269$.
	Decomposition by Midpoint IC model gives~$v_{\IC}^{\mathrm{int}}$ and geodesic part~$w_{\IC}^{\mathrm{int}}$
	with geodesic midpoint $u_{\IC}^{\mathrm{int}}$.
		}\label{fig:spd2}
\end{figure}

In this subsection, we consider the decomposition and denoising of
SPD-valued data.
{{Fig.~\ref{fig:spd2}} shows a signal $u_0$ with values in
${\mathcal P} (2)$
which is the midpoint of a signal with four constant
parts and one with two geodesic parts.
The signal $f$ is its noisy version with Gaussian noise.
We apply the intrinsic additive model 
($\alpha = 0.46, \beta = 1$), the intrinsic TGV model ($\alpha = 2.5, \beta =  0.2$),
and the Midpoint IC
model ($\alpha = 4.5,\beta = \frac{1}{9}$) 
to $f$. Here we use $\varepsilon = 10^{-3}$ as relaxation.
With respect to both  the MSE and visually the TGV and Midpoint IC model outperform the additive one. 
In particular, the smooth parts are better reconstructed. The results from the sophisticated intrinsic priors are very similar and visually not distinguishable.
The components $v$ and $w$ from the IC model give a decomposition of $u_0$ into a
piecewise constant and geodesic component.
\begin{figure*}
	\centering
	\begin{subfigure}{0.32\textwidth}
		\centering
		\includegraphics[width=.98\textwidth]{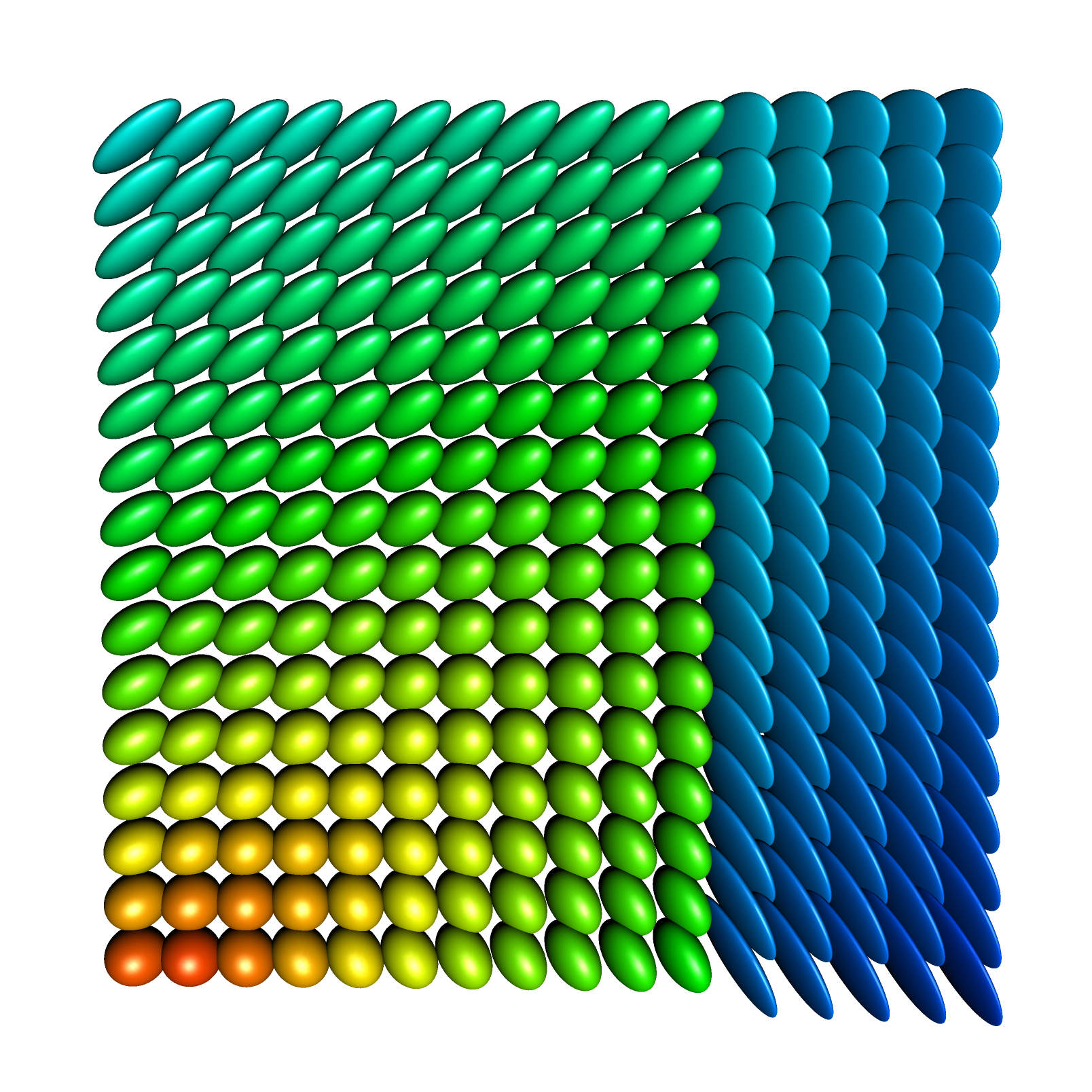}
		\caption{Original image.}\label{ictgv:fig:SPDImgIC:orig}
	\end{subfigure}
	\begin{subfigure}{0.32\textwidth}
		\centering	
		\includegraphics[width=.98\textwidth]{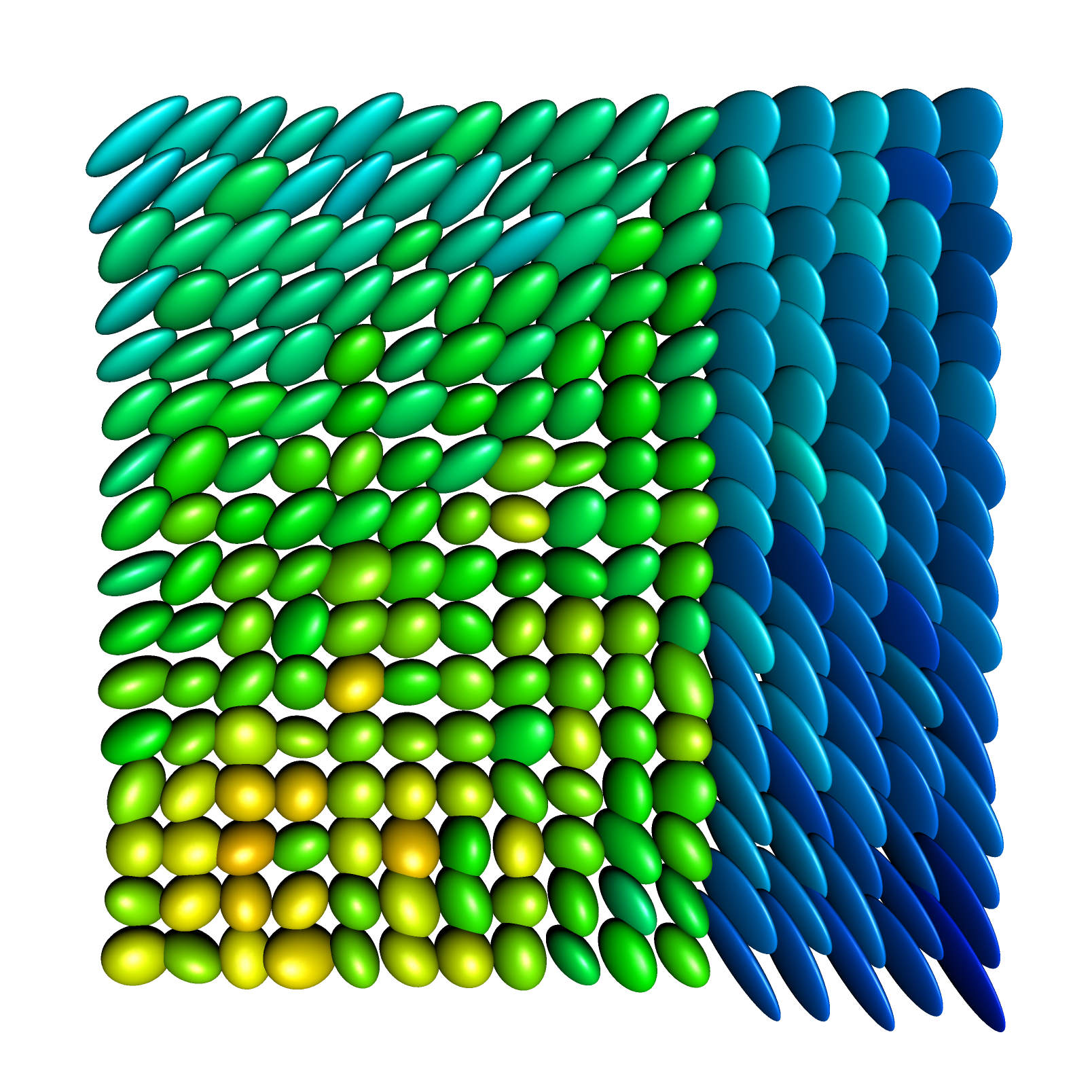}
		\caption{Noisy image, $\epsilon=0.0583$.}\label{ictgv:fig:SPDImgIC:noisy}
	\end{subfigure}\\	
	\begin{subfigure}{0.32\textwidth}
		\centering	
		\includegraphics[width=.98\textwidth]{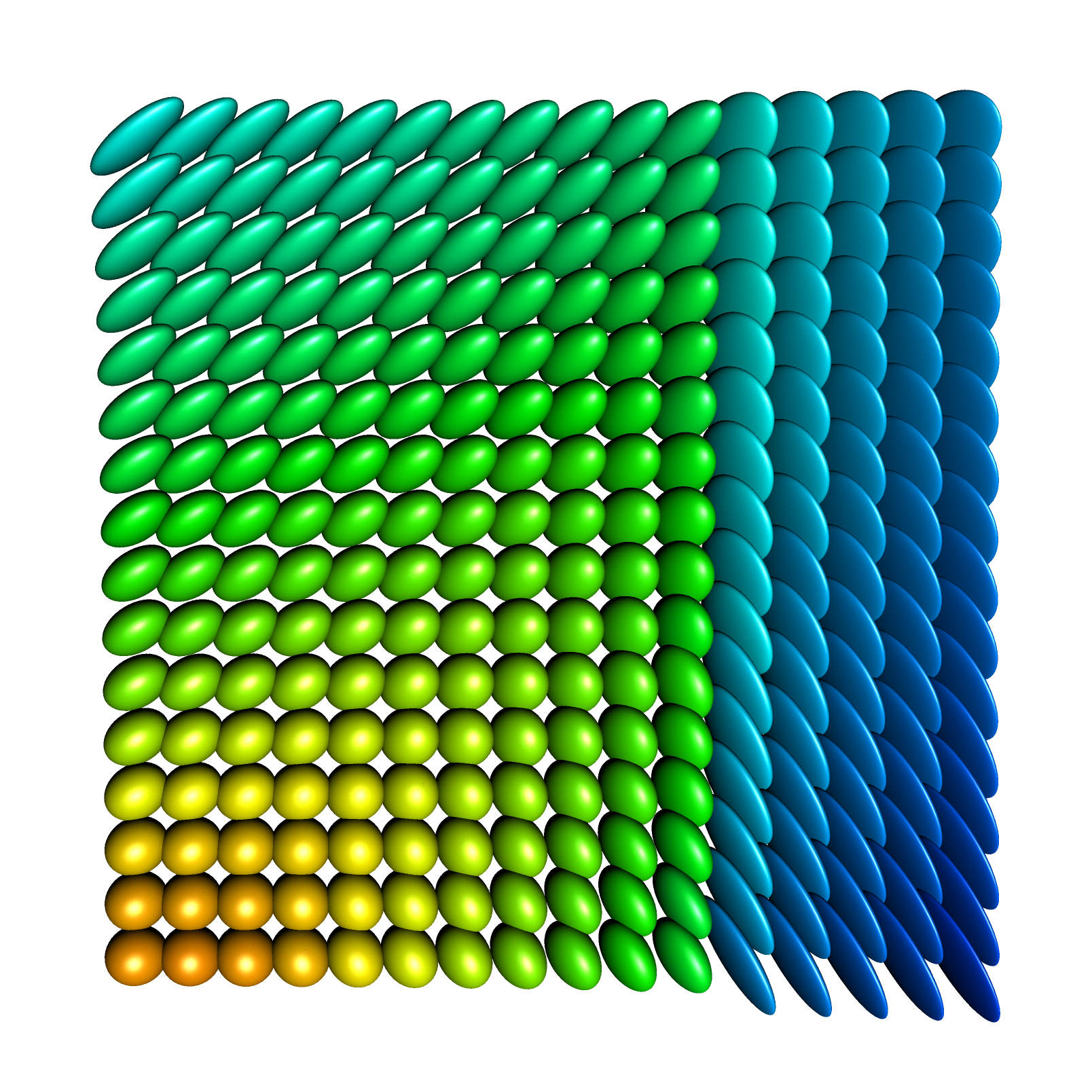}
		\caption{$u^{\mathrm{ext}}_{\IC}$, $\epsilon=0.0066$.}\label{ictgv:fig:SPDImgIC:ext}
	\end{subfigure}
	\begin{subfigure}{0.32\textwidth}
		\centering	
		\includegraphics[width=.98\textwidth]{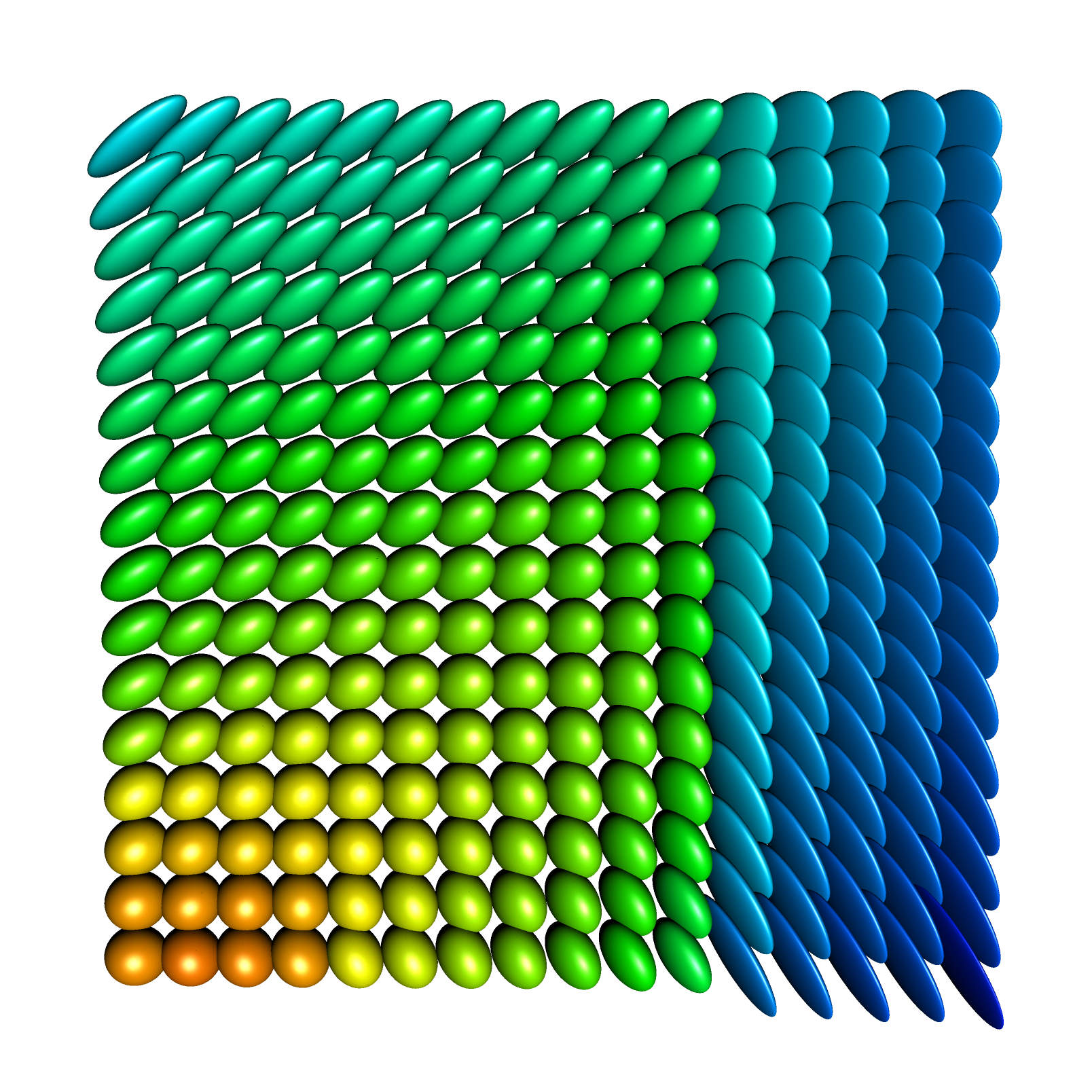}
		\caption{$u^{\mathrm{ext}}_{\TGV}$, $\epsilon=0.0065$.}\label{ictgv:fig:SPDImgTGV:ext}	
	\end{subfigure}
	\begin{subfigure}{0.32\textwidth}
		\centering	
		\includegraphics[width=.98\textwidth]{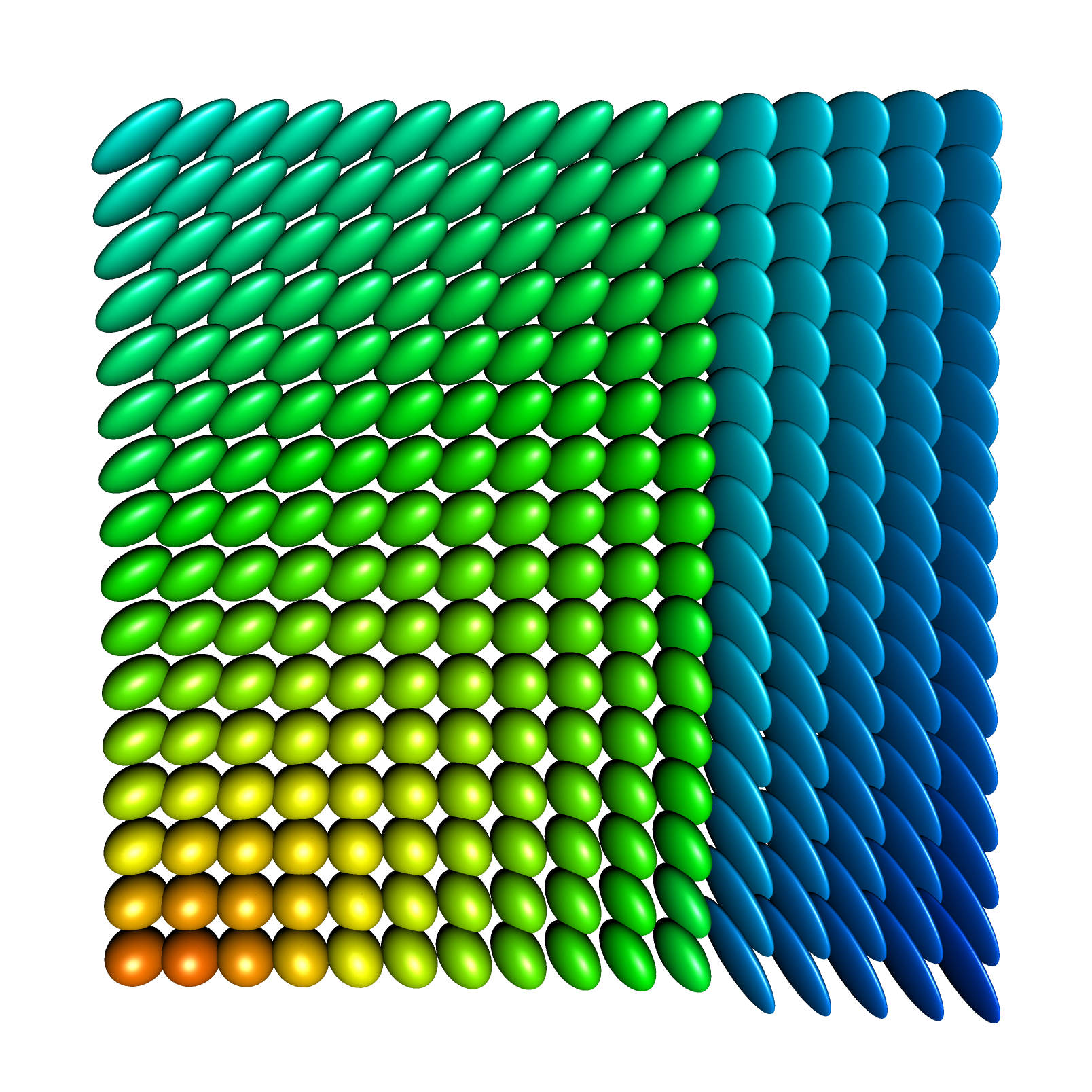}
		\caption{$u^{\mathrm{int}}_{\TGV}$, $\epsilon=0.0034$.}\label{ictgv:fig:SPDImgTGV:int}
	\end{subfigure}
\\
	\begin{subfigure}{0.32\textwidth}
		\centering
		\includegraphics[width=.98\textwidth]{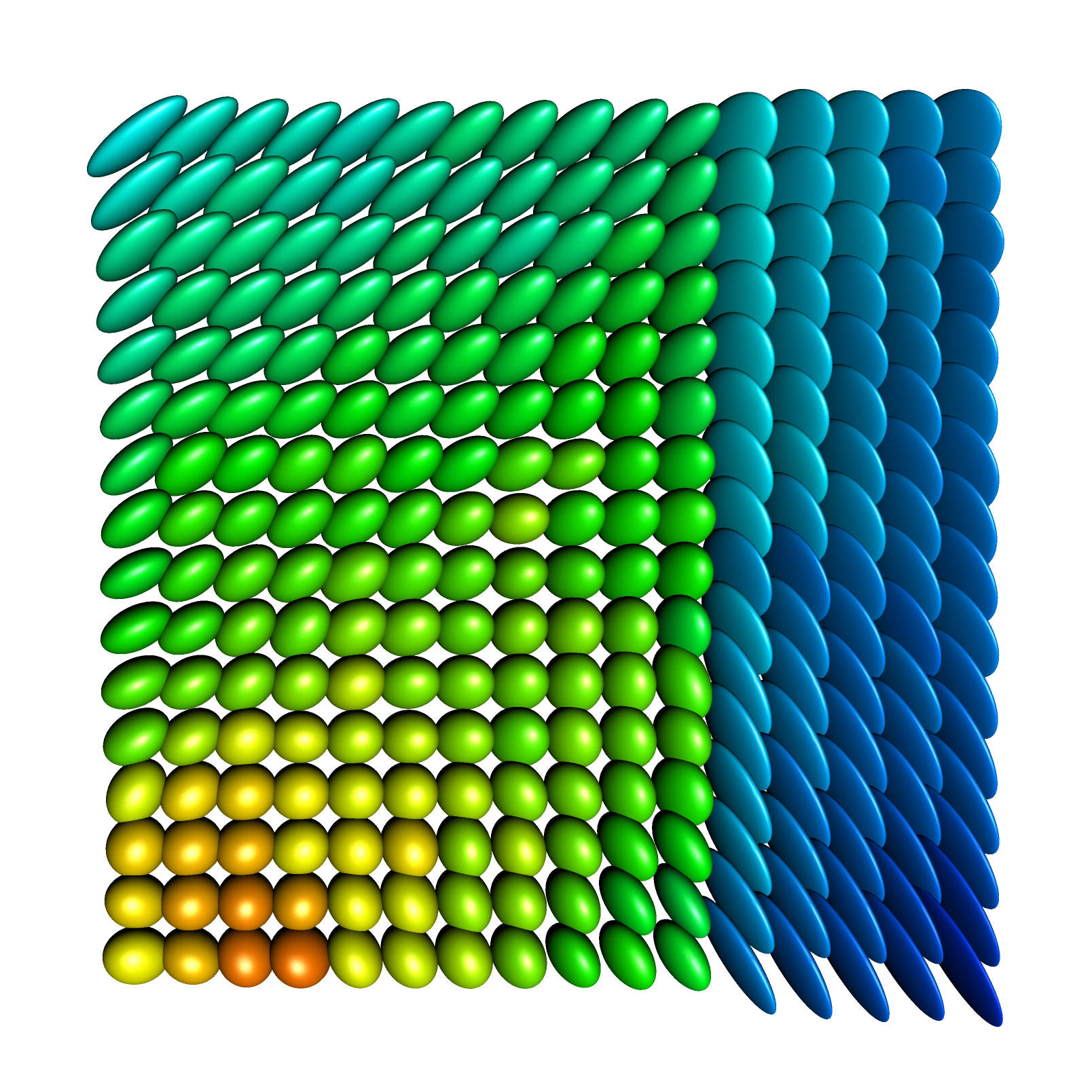}
		\caption{$u^{\mathrm{int}}_{\IC}$, $\epsilon=0.0125$.}\label{ictgv:fig:SPDImgIC:recon}
	\end{subfigure}
	\begin{subfigure}{0.32\textwidth}
		\centering
		\includegraphics[width=.98\textwidth]{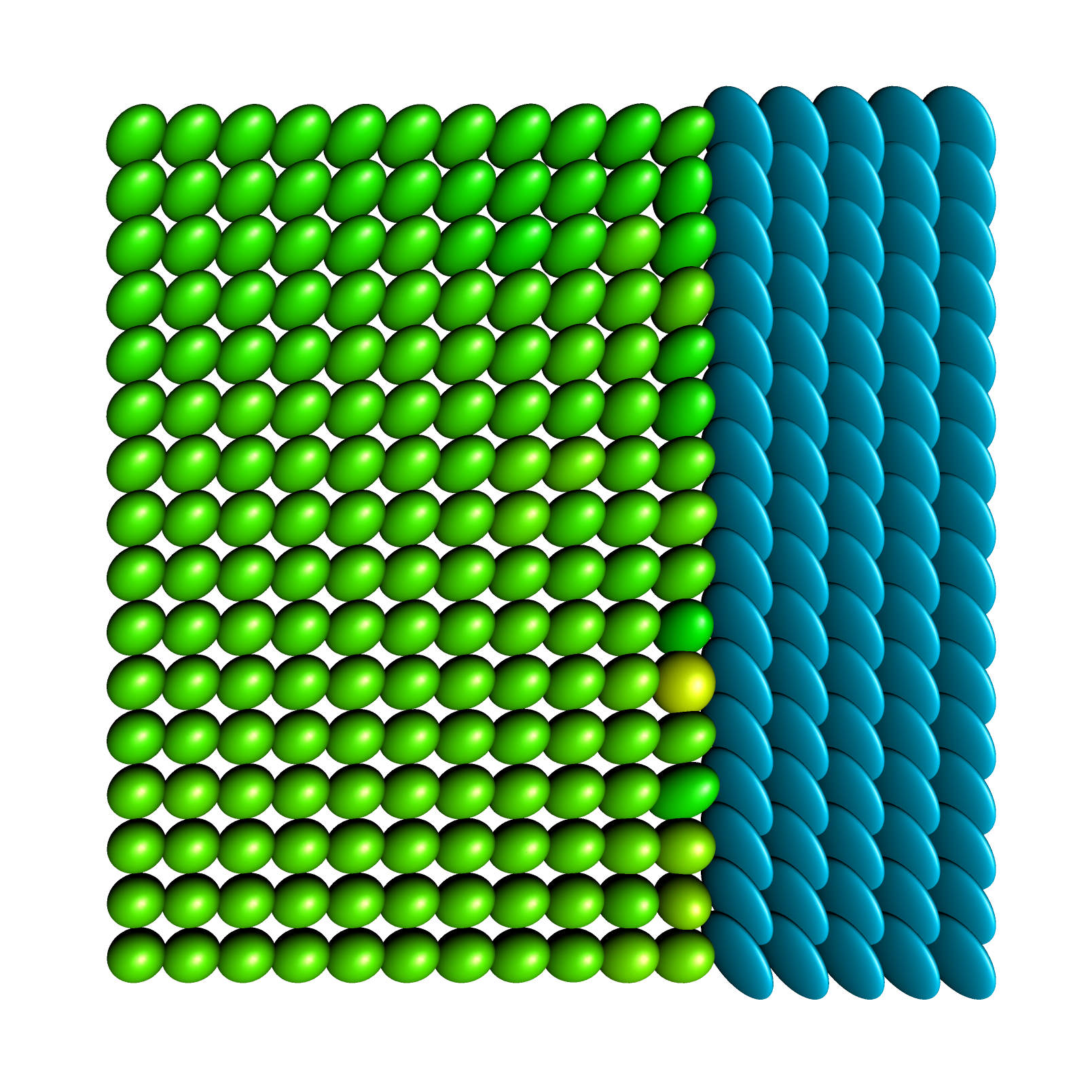}
		\caption{$v^{\mathrm{int}}_{\IC}$.}\label{ictgv:fig:SPDImgIC:v}
	\end{subfigure}
	\begin{subfigure}{0.32\textwidth}
		\centering	
		\includegraphics[width=.98\textwidth]{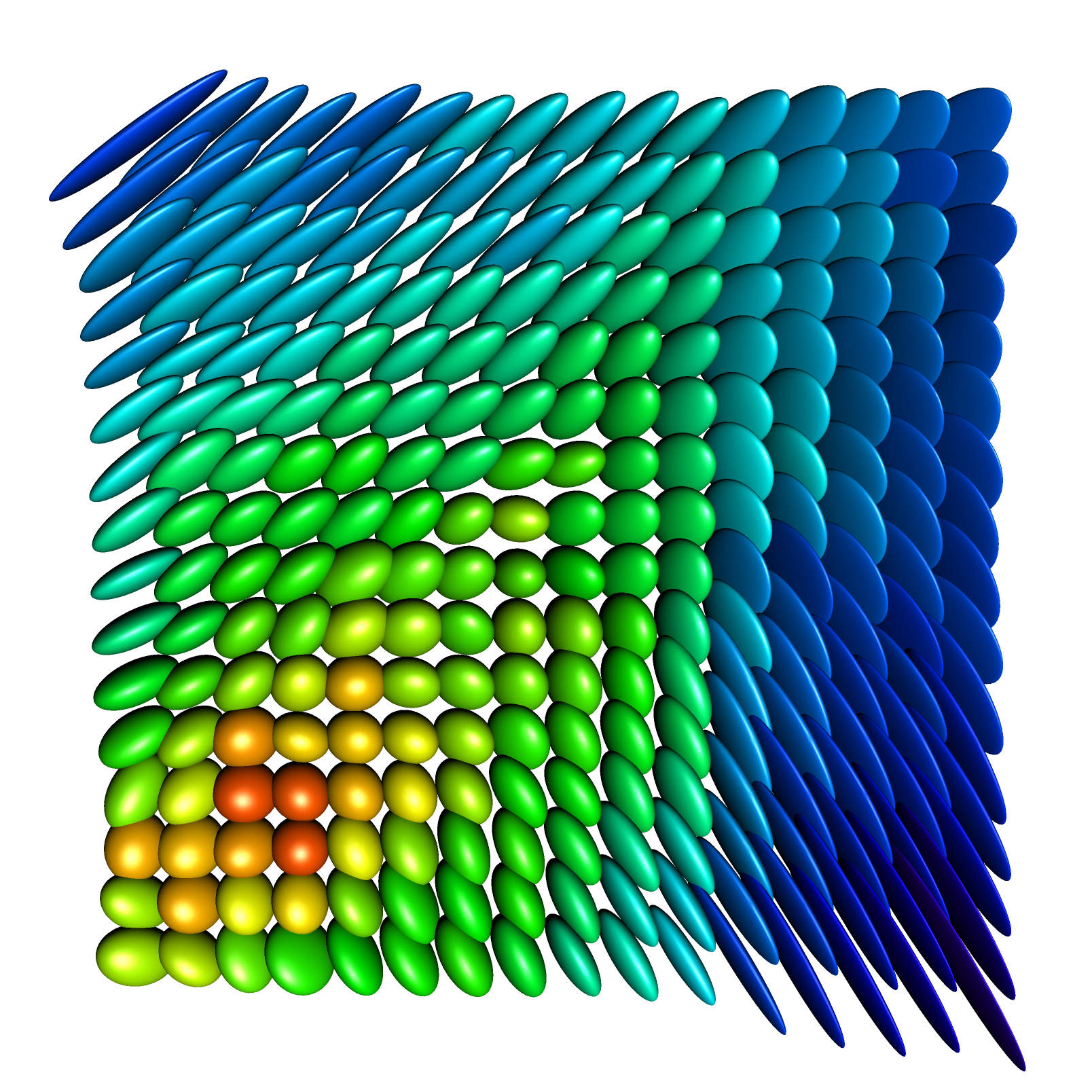}
		\caption{$w^{\mathrm{int}}_{\IC}$.}\label{ictgv:fig:SPDImgIC:w}
	\end{subfigure}
	\caption[]{Denoising of an artificial $\SPD{3}$-valued image with extrinsic and Midpoint $\IC$ model.
	}
	\label{ictgv:fig:SPDImgIC}
\end{figure*}

Fig.~\ref{ictgv:fig:SPDImgIC:orig}
shows an artificial \(\SPD{3}\)-valued image which is 
corrupted by Gaussian noise (\(\sigma=0.1\)) resulting in \subref{ictgv:fig:SPDImgIC:noisy}. 
The result of the extrinsic TGV model $u^{\mathrm{ext}}_{\TGV}$ ($\alpha = 12$, $\beta = 0.9$) is shown in \subref{ictgv:fig:SPDImgTGV:ext} and the extrinsic IC model $u^{\mathrm{ext}}_{\IC}$ ($\alpha = 4$, $\beta = 0.4$) in \subref{ictgv:fig:SPDImgIC:ext}}.
As the geodesics in $\SPD{3}$ are not linear in the embedding it is advantageous to use intrinsic models to denoise the image. 
The pole ladder TGV model ($\alpha = 0.7,\ \beta = 0.3,\ \varepsilon = 10^{-4}$) yields the result in \subref{ictgv:fig:SPDImgTGV:int}, which has a lower error as the extrinsic methods.
Denoising with the Midpoint IC model 
(\(\alpha=\frac{1}{10}\), \(\beta=\frac{1}{2},\ \varepsilon = 10^{-5}\))
leads to the denoised image \(u^{\mathrm{int}}_{\IC}\) \subref{ictgv:fig:SPDImgIC:recon}. 
The corresponding IC components depicted in \subref{ictgv:fig:SPDImgIC:v} and \subref{ictgv:fig:SPDImgIC:w} show nicely
the piecewise constant part~\(v^{\mathrm{int}}_{\IC}\) containing the jump
and the geodesic part~\(w^{\mathrm{int}}_{\IC}\).

\subsection{SO(3)-valued data}
EBSD is often given as images having values $[f_i]$ in the quotient $\operatorname{SO}(3)/S$ of the Lie group $\operatorname{SO}(3)$,
where $S \subset \operatorname{SO}(3)/S$ denotes the symmetry group of the
crystal structure in point $i$.
EBSD images usually consist of regions with similar orientations called
 grains.  {{ Fig.~\ref{fig:ebsd}} displays a typical EBSD image of a magnesium specimen
 from the software package MTEX \cite{MTEX} which is also used for the color visualizing
 the data. For certain macroscopic properties the pattern of orientations within
 single grains is important, see e.g.,~\cite{BHJPSW10,SAK00}. 
 
  %
 \begin{figure*}
 	\centering
  \begin{subfigure}[t]{.49\textwidth}\centering
    \includegraphics[width=0.95\textwidth]{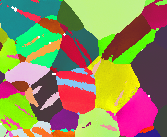}
    \caption{Color coding of magnesium.}
  \end{subfigure}
  \begin{subfigure}[t]{.49\textwidth}\centering
    \includegraphics[width=0.95\textwidth]{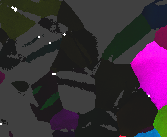} \\[1.5ex]
    \caption{Clinched color coding.}
	\end{subfigure}
   	\caption{Left: The raw EBSD data of a Magnesium sample 
		with color visualization from \cite{MTEX}.
 		Right: 	Clinched colorization to highlight details in single grains.\label{fig:ebsd}
 	}
 \end{figure*}
 
 Fig.~\ref{ictgv:fig:ebsd_IC} displays the single grain at the lower right corner of Fig. \ref{fig:ebsd}.
 Since the rotations vary little within a single grain, we treat the data as
 \(\operatorname{SO}(3)\)-valued.
 Within this single grain there occurs a so-called subgrain  boundary which should be preserved
during denoising.
 We compare results of the different IC models.
Note that we apply a different colorization for the component $w$ by placing the center of the 
colormap at the Karcher mean of the samples and using the same stretching factor for all $w$.
The denoising result of the extrinsic IC model ($\alpha = 0.06,\ \beta=\frac{1}{3}$)
are similar to those if the Midpoint IC model  $\alpha = \frac{1}{20},\ \beta = \frac{1}{3}$ and the
Lie group IC model ($\alpha = \frac{1}{20},\ \beta = \frac{1}{3}$).
The component $v$ penalized with the $\TV$ term has a piecewise constant structure, cf. Fig.~\ref{ictgv:fig:ebsd_ic:vmid} and~\subref{ictgv:fig:ebsd_ic:vgroup}, while the $w$ part is smooth, see Fig.~\ref{ictgv:fig:ebsd_ic:wmid} and~\subref{ictgv:fig:ebsd_ic:wgroup}. 
Even though, we used the same set of parameters for both approaches, we observe some differences. The $v^{\mathrm{int}}_{\IC}$-component of the mid point model has a larger jump, which is expected, as the jump should match twice the jump in the original signal. The $w^{\mathrm{int}}_{\IC}$-component on the other hand has less movement as $w^{\mathrm{Lie}}_{\IC}$ of the Lie group model. The $\TV_2^{\textrm{int}}$ regularizes stronger as the Lie group counterpart. This effect is also visible in the reconstructed images, i.e., $u^{\mathrm{Lie}}_{\IC}$ is more constant, as the other two results.

\begin{figure*}\centering
	\begin{subfigure}{0.32\textwidth}
		\centering
		\includegraphics[width=.98\textwidth]{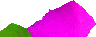}
		\caption{Noisy grain.}\label{ictgv:fig:ebsd_ic:orig}
	\end{subfigure}
	\begin{subfigure}{0.32\textwidth}
		\centering
		\includegraphics[width=.98\textwidth]{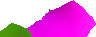}
		\caption{$u^{\mathrm{ext}}_{\IC}$.}\label{ictgv:fig:ebsd_ic:ext}
	\end{subfigure}
	
	\begin{subfigure}{0.32\textwidth}
		\centering
		\includegraphics[width=.98\textwidth]{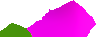}
		\caption{$u^{\mathrm{int}}_{\IC}$.}\label{ictgv:fig:ebsd_ic:umid}
	\end{subfigure}
	\begin{subfigure}{0.32\textwidth}
		\centering
		\includegraphics[width=.98\textwidth]{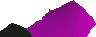}
		\caption{$v^{\mathrm{int}}_{\IC}$.}\label{ictgv:fig:ebsd_ic:vmid}
	\end{subfigure}
	\begin{subfigure}{0.32\textwidth}
		\centering
		\includegraphics[width=.98\textwidth]{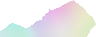}
		\caption{$w^{\mathrm{int}}_{\IC}$.}\label{ictgv:fig:ebsd_ic:wmid}
	\end{subfigure}
	
	\begin{subfigure}{0.32\textwidth}
		\centering
		\includegraphics[width=.98\textwidth]{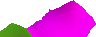}
		\caption{$u^{\mathrm{Lie}}_{\IC}$.}\label{ictgv:fig:ebsd_ic:ugroup}
	\end{subfigure}
	\begin{subfigure}{0.32\textwidth}
		\centering
		\includegraphics[width=.98\textwidth]{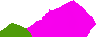}
		\caption{$v^{\mathrm{Lie}}_{\IC}$.}\label{ictgv:fig:ebsd_ic:vgroup}
	\end{subfigure}
	\begin{subfigure}{0.32\textwidth}
		\centering
		\includegraphics[width=.98\textwidth]{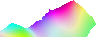}
		\caption{$w^{\mathrm{Lie}}_{\IC}$.}\label{ictgv:fig:ebsd_ic:wgroup}
	\end{subfigure}
	\caption[]{Denoising and decomposition of EBSD data of a grain with subgrain boundary by the different IC models.}
	\label{ictgv:fig:ebsd_IC}
\end{figure*}

\begin{figure*}
	\centering
	\begin{subfigure}{0.32\textwidth}	
		\centering
		\includegraphics[width = 0.98\textwidth]{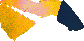}
		\caption{Noisy grain.}
	\end{subfigure}
	\hspace{.15\textwidth}
	\begin{subfigure}{0.32\textwidth}	
		\centering		
		\includegraphics[width = 0.98\textwidth]{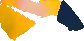}
		\caption{$u^{\mathrm{ext}}_{\TGV}$.}\label{fig:ebsd:tgv:ext}
	\end{subfigure}
	\vspace{0.2cm}
	
	\begin{subfigure}{0.32\textwidth}	
		\centering	
		\includegraphics[width = 0.98\textwidth]{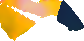}
		\caption{$u^{\mathrm{Lie}}_{\TGV}$.}\label{fig:ebsd:tgv:int}
	\end{subfigure}	
	\begin{subfigure}{0.32\textwidth}	
		\centering	
		\includegraphics[width = 0.98\textwidth]{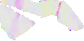}
		\caption{$a^{\mathrm{Lie}}_{1,\TGV}$.}\label{fig:ebsd:tgv:x}
	\end{subfigure}	
	\begin{subfigure}{0.32\textwidth}	
		\centering	
		\includegraphics[width = 0.98\textwidth]{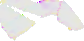}
		\caption{$a^{\mathrm{Lie}}_{2,\TGV}$.}\label{fig:ebsd:tgv:y}
	\end{subfigure}	
	\caption[]{Denoised EBSD data of a grain with subgrain boundary with the extrinsic and Lie group $\TGV$.}\label{fig:ebsd:tgv}
\end{figure*}	
	
In Fig.~\ref{fig:ebsd:tgv}, we apply the extrinsic and Lie group TGV model to the grain in the upper left corner of
Fig.~\ref{fig:ebsd}.
For the extrinsic approach we chose the parameters $(\alpha = 1.4\times10^{-2},\beta = \frac{3}{7})$
and for the Lie group one $(\alpha=0.005,\beta = \frac{2}{3})$.
Both methods lead to similar denoising results. 
However, the intrinsic Lie group TGV allows a meaningful decomposition of the “gradient”.
The components of the vector $a = (a_1,a_2)$  
are shown in Fig.~\ref{fig:ebsd:tgv:x} and~\subref{fig:ebsd:tgv:y}. 
In particular, the “compensator” $a_1$ has a jump at the subgrain boundary.

\section{Conclusions} \label{sec:Concl}
We proposed space discrete intrinsic variational models for the restoration
of manifold-valued images, where we considered three different priors, 
namely additive and IC coupling of absolute first and  second order differences
and a TGV model. 
For Lie groups, another approach was given based on the group operation.
In contrast to our general intrinsic TGV model, where gradients and their additive components are defined in tangent spaces,  
the components of the TGV Lie group approach live on the manifold.
To get a better intuition of the models one should discuss what happens if the grid mesh size goes to zero.
Or the other way around, 
a spatial continuous setting from which the current models follow by discretization is highly interesting,
but clearly out of the focus of this paper.
Note that for ${\mathcal M} = \mathbb S^1$ there exists a continuous TV approach
\cite{GMS93,GM06,GM07}.

The performance of our models 
was demonstrated by numerical examples.
A future topic consists in speeding up the computations.
In \cite{BCHPS16} we proposed for example a half-quadratic method which may be applied.

\appendix

\section{Riemannian Gradients of Differences} \label{app:grad}
\allowdisplaybreaks
\emph{Proof of Lemma} \ref{lem:grad_lie}: Using that the bi-invariant metric is invariant to inversion, i.e., 
$\dist(x,y) = \dist(x^{-1},y^{-1})$ for $x,y\in\mathcal{M}$,
we obtain
\begin{align}
&\grad_{\mathcal{M},w_{i}} (\mathrm{d}_{xy}^{\mathrm{Lie}} w)^2_{i}(w_{i}) \\
&= \grad_{\mathcal{M},w_{i}}\bigl(\dist^2(w_{i+(1,0)} \circ w_{i}^{-1}, w_{i + (1,-1)}\circ w^{-1}_{i-(0,1)})\big)\\
&= \grad_{\mathcal{M},w_{i}}\bigl(\dist^2( w_{i}^{-1},w^{-1}_{i+(1,0)} \circ w_{i + (1,-1)}\circ w^{-1}_{i-(0,1)})\big)\\
&= \grad_{\mathcal{M},w_{i}}\bigl(\dist^2( w_{i},w_{i-(0,1)} \circ w^{-1}_{i + (1,-1)}\circ w_{i+(1,0)})\big)\\
&=-2\log_{w_{i}} (w_{i-(0,1)} \circ w^{-1}_{i + (1,-1)}\circ w_{i+(1,0)}).
\end{align}
For $\xi\in T_{w_{i}}\mathcal{M}$ we obtain
\begin{align}
&\Bigl\langle\grad_{\mathcal{M},w_{i}}\bigl(\mathrm{d}_{xx}^{\mathrm{Lie}} \cdot \bigr)^2_{i}(w_{i}),\xi\Bigr\rangle_{w_{i}}\\
&=\Bigl\langle\grad_{\mathcal{M},w_{i}}\bigl(\dist^2(\cdot\circ w^{-1}_{i-(1,0)}\circ \cdot,w_{i+(1,0)}) \bigr) (w_{i}),\xi \Bigr\rangle_{w_{i}}\\
&= D \bigl(\dist^2(\cdot \circ w^{-1}_{i-(1,0)} \circ  \cdot,w_{i+(1,0)})\bigr) (w_{i})[\xi]\\
&= D \bigl(\dist^2(\cdot,w_{i+(1,0)})\bigr) \big( w_{i}\circ w^{-1}_{i-(1,0)}  \circ w_{i} \big)\\
&\quad \; D(\cdot \circ w^{-1}_{i-(1,0)}\circ\cdot)(w_{i}) [\xi]
\\
&\begin{aligned}= 
-2&\Bigl\langle\log_{w_{i}\circ w^{-1}_{i-(1,0)}\circ w_{i}}w_{i+(1,0)},
\\&\
D(\cdot \circ w^{-1}_{i-(1,0)}\circ \cdot)(w_{i}) [\xi]\Bigr\rangle_{w_{i}\circ w^{-1}_{i-(1,0)}\circ w_{i}}
\end{aligned}\\
&\begin{aligned}
= -2&\Bigl\langle\log_{w_{i}\circ w^{-1}_{i-(1,0)}\circ w_{i}}w_{i+(1,0)},\\&\
D {\mathcal L}_{w_{i}\circ w^{-1}_{i-(1,0)}} [\xi]
+
D{\mathcal R}_{w^{-1}_{i-(1,0)}\circ w_{i}}[\xi]\Bigr\rangle_{w_{i}\circ w^{-1}_{i-(1,0)} \circ w_{i}}
\end{aligned}\\
&\begin{aligned}=-2
&\Bigl\langle D{\mathcal L}_{w_{i-(1,0)}\circ w_{i}^{-1}} [\log_{w_{i}\circ w^{-1}_{i-(1,0)}\circ w_{i}}w_{i+(1,0)}]\\
&+D{\mathcal R}_{w^{-1}_{i}\circ w_{i-(1,0)}}
[\log_{w_{i}\circ w^{-1}_{i-(1,0)}\circ w_{i}}w_{i+(1,0)}],
\xi
\Bigl\rangle_{w_{i}}\!\!.
\end{aligned} \Box
\end{align}
\\[2ex]
\emph{Proof of Lemma} \ref{lem:grad_tgv}:
First, we consider the Riemannian gradient of $F_1$.
As the connection is compatible with the metric, we obtain
$\grad_{\mathcal{M},\xi_{i}} F_1$
and
$\grad_{\mathcal{M},\xi_{i}} F_2$.
For the computation of the gradients of $F_1$ with respect to $u_i,u_{i+1}$ 
we know that the outer function has gradient $T$ and obtain for $\zeta\in T_{u_i}\mathcal{M}$ 
	\begin{align}
	D \bigl(F_1 (\cdot,u_{i +1},\xi_{i})\bigr)(u_{i})[\zeta] 
	&= \bigl\langle T,\tilde L_{u_{i+1}}(u_{i})[\zeta]\bigr\rangle_{u_{i}}\\
	&=\bigl\langle \tilde L^*_{u_{i+1}}(u_{i})[T],\zeta\bigr\rangle_{u_i}\\
	&\eqqcolon \bigl\langle \grad_{\mathcal{M},u_i}\tv(\cdot,u_{i +1}),\xi_{i}\bigr)(u_{i}), \zeta\bigr\rangle_{u_i},
	\end{align}
	Similarly we can treat the derivative with respect to $u_{i+1}$ by replacing $\tilde L$ by $l$.
	Next we handle  
	\begin{align}
	&F_2(u_i,u_{i-1},\xi_i,\xi_{i-1})\\
	&=
        \lVert \xi_{i}+ \log_{u_i} \Bigl(\gamma\Bigl(\exp_{u_{i-1}} \xi_{i-1},\gamma\bigl(u_i,u_{i-1};\tfrac{1}{2}\bigr);2\Bigr)\Bigr)\Bigr\rVert_{u_{i}}^2.
	\end{align}
	To compute the differential with respect to $\xi_{i-1}$ we need to apply the differentials in the same order as they appear in the pole ladder. 
	This leads for a $\zeta \in T_{u_{i-1}}\mathcal{M}$ to 
	\begin{align}
	& D\bigl(F_2(u_i,u_{i-(1,0)},\xi_i,\cdot)\bigr)(\xi_{i-1})[\zeta] \\
	&= \Bigr\langle S,L_{u_{i}}(p_{i})
	\Bigl[G_{\cdot,c_{i},2}(e_i)
	\bigl[E_{u_{i-1}}(\xi_{i-1})[\zeta]\bigr]\Bigr]\Bigr\rangle_{u_{i}}\\
	&=\Bigr\langle E^*_{u_{i-1}}(\xi_{i-1})
	\Bigl[G^*_{\cdot,c_{i},2}(e_i)
	\bigl[L^*_{u_{i}}(p_{i})[S]\bigr]\Bigr],\zeta\Bigr\rangle_{u_{i-1}}.
	\end{align}
    As $u_i$ or $u_{i+1}$ appear twice in the pole ladder we get a sum of two differentials. 
    For $u_i$ appearing in the logarithm and the mid point evaluation we obtain for $\zeta\in T_{u_i}\mathcal{M}$, 
    \begin{align}    
	    & D \bigl(F_2(\cdot,u_{i -1},\xi_{i},\xi_{i-1})\bigr)(u_{i})[\zeta]\\
	    &= \biggr\langle S,
	    \tilde{L}_{p_{i}}(u_{i})[\zeta]
	    \\&\qquad+L_{u_{i}}(p_{i})
	    \Bigl[G_{e_i,\cdot,2}(c_{i})
	    \bigl[G_{\cdot,u_{i-1},\frac{1}{2}}(u_{i})[\zeta]\bigr]\Bigr]\biggr\rangle_{u_{i}}\\
	    &=\biggr\langle \tilde{L}^*_{p_{i}}(u_{i})[S]	
	    \\&\qquad+G^*_{\cdot,u_{i-1},\frac{1}{2}}(u_{i})
	    \Bigl[G^*_{e_i,\cdot,2}(c_{i})
	    \bigl[L^*_{u_{i}}(p_{i})
	    [S]\bigr]\Bigr],\zeta\biggr\rangle_{u_{i}}.
    \end{align}
    Similarly, we conclude for $u_{i-1}$ and $\zeta\in T_{u_{i-1}}\mathcal{M}$,
    \begin{align}  
	    & D\bigl(F_2(u_{i},\cdot,\xi_{i},\xi_{i-1})\bigr)(u_{i -1})[\zeta] \\
	    &= \Bigr\langle S,L_{u_{i}}(p_{i})\bigl[G_{\cdot,c_{i},2}(e_{i})\bigl[\tilde{E}_{\xi_{i-1}}(u_{i-1})[\zeta]\bigl]\Bigr]	
	    \\&\qquad+
	    L_{u_{i}}(p_{i})
	    \Bigl[G_{e_i,\cdot,2}(c_{i})
	    \bigl[G_{u_{i},\cdot,\frac{1}{2}}(u_{i-1})[\zeta]\bigr]\Bigr]\Bigr\rangle_{u_{i}}\\
	    &=\Bigr\langle \tilde E_{\xi_{i-1}}^*(u_{i-1})\Bigl[
	    G^*_{\cdot,c_{i},2}(e_{i})\bigl[
	    L^*_{u_{i}}(p_{i})[S]\bigr]\Bigr]
	    \\&
	    \qquad
	    + G^*_{u_i,\cdot,\frac{1}{2}}(u_{i-1})
	    \Bigl[G^*_{e_i,\cdot,2}(c_{i})
	    \bigr[L^*_{u_{i}}(p_{i})[S]\bigr]\Bigr]
	    ,v\Bigr\rangle_{u_{i}}. \qquad \Box
    \end{align}

\section{Special Manifolds} \label{sec:app}
%
\subsection{The \texorpdfstring{$d$}{d}-dimensional Sphere}
Let~$\mathbb S^{d} = \bigl \{  x\in \R^{d+1}\colon \lVert x\rVert_{2}=1\bigr\}$
denote the \(d\)-dimensional unit sphere embedded in \(\mathbb R^{d+1}\).
The tangential space at $ x\in\mathbb S^{d}$ is given by
\[
T_{ x} \mathbb S^{d}=\bigl\{\xi \in\R^{d+1}: \langle {x} ,\xi \rangle = 0\bigr\}.
\]
A Riemannian metric is the metric from the embedding space, i.e., the Euclidean
inner product.
The geodesic distance related to this metric is given by
\begin{equation*}
\dist( x, y) = \arccos\langle x, y\rangle,
\end{equation*}
where $\langle\cdot,\cdot\rangle$ is the standard scalar product in $\R^{d+1}$.
The geodesic
$\gamma_{ x, \xi}(t)$ with
$\gamma_{ x, \xi}(0) =  x$ and $\dot \gamma _{x, \xi}(0) = \xi$
is given by
\begin{equation}
\gamma_{ x, \xi}(t) = \cos(t\| \xi\|_2)  x + \sin(t\| \xi\|_2) \frac{\xi}{\| \xi\|_2}.
\end{equation}
The exponential and logarithmic map read as
\begin{align*}
\exp_{x} (\xi) &= x \cos\bigl(\lVert \xi \rVert\bigr)+\frac{\xi}{\lVert \xi \rVert}\sin\bigl(\lVert \xi\rVert\bigr),\\
\log_{x} (y) &=  \dist_{\mathbb S^{d}}( x, y) \, \frac{{ y}-\langle x,y
\rangle x}{\lVert  y-\langle  x, y \rangle x\rVert},
\quad  x \not = - y.
\end{align*}
The orthogonal projection of $x \in \mathbb R^{d+1}$ onto $\mathbb S^d$ is given by
$\Pi(x) = x/\|x\|_2$.
The parallel transport \[P_{x\to y}\colon T_x{\mathcal S}^d(r)\to T_y{\mathcal S}^d\] 
along the geodesic from $x$ to $y$ is given by, see e.g.~\cite{HU17},
	\begin{equation}
		P_{x\to y}(\xi) = \xi-\frac{\bigl\langle\log_x(y),\xi\bigr\rangle}{\dist^2_{\SS^d}(x,y)}\bigl(\log_x(y)+\log_y(x)\bigr).
	\end{equation}	 

\subsection{The special orthogonal group}
Let $\operatorname{SO}(3)=\{x\in\mathbb{R}^{3,3}:x^\tT x = I_3,\det(x)=1\}$, 
be the space of rotations in $\mathbb R^3$. The tangent space at $x \in \operatorname{SO}(3)$ is
$
T_x\operatorname{SO}(3) = x \operatorname{Skew}(3),
$
with $\operatorname{Skew}(3) = \{x\in\mathbb{R}^{3,3}:x^\tT +x = 0\}$.
It is a Lie group with bi-invariant metric and geodesic distance
\begin{equation}
\dist_{\operatorname{SO}(3)}(x,y) = \sqrt{2}\arccos\Bigl(\frac{\operatorname{tr}(x^\tT y)-1}{2}\Bigr).
\end{equation}
An isometric representation of the rotations in $\mathbb{R}^3$ 
is given by the unit quaternions, see \cite{Graef12}: 
for $p_1,p_2\in\mathbb R ^4$, $p_1= (s_1,v_1)^\tT,p_2= (s_2,v_2)^\tT,\ v_1,v_2\in\mathbb R^3$, the multiplication is defined by
\begin{equation}
p_1\circ p_2 = \begin{pmatrix}
s_1s_2-v_1^\tT v_2\\
s_1v_2+s_2v_1+v_1\times v_2
\end{pmatrix},
\end{equation}
the unit element is $e = (1,0,0,0)^\tT$ and the inverse is given by \[p^{-1} = (p_1,-p_2,-p_3,-p_4).\]
A rotation of a vector $x\in\mathbb R^3$ around the angle $\alpha\in(0,\pi]$ and axis $r\in\mathbb S^2$ can be realized with
\begin{equation}
p(\alpha,r) \coloneqq \begin{pmatrix}
\cos(\frac{\alpha}{2})\\
\sin(\frac{\alpha}{2})r
\end{pmatrix}, p(\alpha,r)\circ\begin{pmatrix}
0\\x
\end{pmatrix}\circ p(\alpha,r)^{-1} = \operatorname{rot}(\alpha,r).
\end{equation}
Note that $p(\alpha,r)\in\mathbb{S}^3$, further $p(\alpha_1,r_1)\circ p(\alpha_2,r_2)\in\mathbb{S}^3$, 
so the rotations can be identified with elements on the sphere $\mathbb{S}^3$. As $p$ and $-p$ yield the same rotation, 
we have a bijection between $\operatorname{SO}(3)$ and $\mathbb S^3\slash\{-1,1\}$. 
Furthermore $(\operatorname{SO}(3),\dist_{\operatorname{SO}(3)})$ 
is isometric to $(\mathbb S^3\slash\{-1,1\},\sqrt{2}\dist_{\mathbb S^3\slash\{-1,1\}})$, with
\begin{equation*}
\dist_{\mathbb S^3\slash\{-1,1\}}(p,q) = \arccos\lvert\langle p,q\rangle\rvert.
\end{equation*}
The exponential map, logarithmic map, and the projection on $\mathbb S^3$ can be used, with a few adjustments. 
The result of the exponential map and the projection is chosen, such that the first entry is positive. 
For the computation of the logarithmic map $\log_p q$, we chose the representation of $q$ having the smallest distance to $p$.

\subsection{Symmetric positive definite matrices}
The dimension of the
manifold~$\SPD{r}$ of symmetric positive definite matrices
is $d = \frac{r(r+1)}{2}$.
Then the affine invariant geodesic distance is given by
\[
\dist_{\SPD{r}}( x,y) = \bigl\lVert \mathrm{Log}( x^{-\frac{1}{2}} { y}  x^{-\frac{1}{2}} )\bigr\rVert_{\mathrm{F}},
\]
where $\lVert \cdot \rVert_{\mathrm{F}}$ denotes the Frobenius norm of matrices and $\mathrm{Exp}$ and $\mathrm{Log}$ denote the matrix exponential
and logarithm, respectively.
The tangential space at $ x\in \SPD{r}$ is given by
\[
  T_{x}\SPD{r} =
  \{ x^{\frac{1}{2}}\xi x^{\frac{1}{2}} : \xi\in\operatorname{Sym}(r) \}
  = \operatorname{Sym}(r),
\]
where  $\operatorname{Sym}(r)$ denotes the space of
symmetric $r \times r$ matrices. The Riemannian metric reads
\[
\langle \xi_1,\xi_2 \rangle_{x} = \mathrm{tr} (\xi_1  x^{-1} \xi_2  x^{-1}),
\quad \xi_1,\xi_2\in T_x\SPD{r}.
\]
The exponential and the logarithmic map are
\begin{align} \label{exp_spd}
\exp_{ p} (\xi) &=  p^{\frac{1}{2}} \mathrm{Exp}\bigl(  p^{-\frac{1}{2}} \xi  p^{-\frac{1}{2}}\bigr)  p^{\frac{1}{2}},\\
\log_{ p}({ q}) &=  p^{\frac{1}{2}}\mathrm{Log}\bigl( p^{-\frac{1}{2}} \, { q} \,  p^{-\frac{1}{2}}\bigr)
 p^{\frac{1}{2}}.
\end{align}
We embed the manifold of symmetric positive definite matrices \(\SPD{r}\)
into~\(\mathbb R^n\), \(n = \frac{r(r+1)}{2}\), using the canonical embedding
of the upper triangular matrix. Then the projection onto the closure of the
manifold \(\SPD{r}\) is given as follows:
let \(x=u\Lambda u^\tT\) denote the eigenvalue decomposition of an real-valued
symmetric matrix \(x\in\mathbb R^{r,r}\) represented as before by its upper
triangular entries as a vector in \(\mathbb R^n\). Hence \(u\) is an orthogonal
matrix, and \(\Lambda = \operatorname{diag}(\lambda_1,\ldots,\lambda_r)\) is
the diagonal matrix of the eigenvalues of \(x\).
The projection is then given by
\begin{align*}
  \Pi(x) = u\tilde\Lambda u^\tT,
  \quad
  \tilde\Lambda \coloneqq \operatorname{diag}(\tilde\lambda_1,\ldots,\tilde\lambda_r),
 \quad
  \tilde\lambda_i \coloneqq \max\{0,\lambda_i\}.
\end{align*}
The parallel transport \[P_{x\to y}\colon T_x\SPD{r}\to T_y\SPD{r}\] along the geodesic from $x$ to $y$ is given by
	\begin{align}
	&P_{x\to y}(\xi) = \geo{x,y}(\tfrac{1}{2}) x^{-1}\xi x^{-1}\geo{x,y}(\tfrac{1}{2}).
	\end{align}

\begin{acknowledgement}
        R. Bergmann wants to thank B. Wirth (University of M\"unster) for fruitful discussions on Schild's ladder TGV.
	Funding by the German Research Foundation (DFG) within the project STE 571/13-1 \& BE 5888/2-1
	and with\-in the Research Training Group 1932,
	project area P3, is gratefully acknowledged.	
\end{acknowledgement}
%
\bibliographystyle{abbrv}
\bibliography{InfConvReferences}

\begin{thebibliography}{10}

\bibitem{AMS08}
P.-A. Absil, R.~Mahony, and R.~Sepulchre.
\newblock {\em Optimization Algorithms on Matrix Manifolds}.
\newblock Princeton University Press, Princeton and Oxford, 2008.

\bibitem{Alouges97}
F.~Alouges.
\newblock A new algorithm for computing liquid crystal stable configurations:
  The harmonic mapping case.
\newblock {\em SIAM Journal on Numerical Analysis}, 34(5):1708--1726, 1997.

\bibitem{APA06}
V.~Arsigny, X.~Pennec, and N.~Ayache.
\newblock Bi-invariant means in {L}ie groups. application to left-invariant
  polyaffine transformations.
\newblock {\em HAL Preprint}, 00071383, 2006.

\bibitem{ABS13}
H.~Attouch, J.~Bolte, and B.~F. Svaiter.
\newblock Convergence of descent methods for semi-algebraic and tame problems:
  proximal algorithms, forward--backward splitting, and regularized
  {G}auss--{S}eidel methods.
\newblock {\em Mathematical Programming}, 137(1):91--129, 2013.

\bibitem{Bac14}
M.~Ba{\v{c}}{\'a}k.
\newblock {\em Convex analysis and optimization in {H}adamard spaces},
  volume~22 of {\em De Gruyter Series in Nonlinear Analysis and Applications}.
\newblock De Gruyter, Berlin, 2014.

\bibitem{BBSW16}
M.~Ba{\v c}{\'a}k, R.~Bergmann, G.~Steidl, and A.~Weinmann.
\newblock A second order non-smooth variational model for restoring
  manifold-valued images.
\newblock {\em SIAM Journal on Scientific Computing}, 38(1):A567--A597, 2016.

\bibitem{MTEX}
F.~Bachmann and R.~Hielscher.
\newblock {MTEX} -- {MATLAB} toolbox for quantitative texture analysis.
\newblock \url{http://mtex-toolbox.github.io/}, 2005--2016.

\bibitem{BHJPSW10}
F.~Bachmann, R.~Hielscher, P.~E. Jupp, W.~Pantleon, H.~Schaeben, and E.~Wegert.
\newblock Inferential statistics of electron backscatter diffraction data from
  within individual crystalline grains.
\newblock {\em Journal of Applied Crystallography}, 43:1338--1355, 2010.

\bibitem{BBEFSS18}
F.~Balle, T.~Beck, D.~Eifler, J.~H. Fitschen, S.~Schuff, and G.~Steidl.
\newblock Strain analysis by a total generalized variation regularized optical
  flow model.
\newblock {\em Inverse Problems in Science \& Engineering}, 2017.
\newblock accepted with minor revision.

\bibitem{BEFSS15}
F.~Balle, D.~Eifler, J.~H. Fitschen, S.~Schuff, and G.~Steidl.
\newblock Computation and visualization of local deformation for multiphase
  metallic materials by infimal convolution of {TV}-type functionals.
\newblock In {\em SSVM 2015}, Lecture Notes in Computer Science, pages
  385--396. Springer, 2015.

\bibitem{BH98}
R.~Bamler and P.~Hartl.
\newblock Synthetic aperture radar interferometry.
\newblock {\em Inverse Problems}, 14(4):R1--R54, 1998.

\bibitem{BCHPS16}
R.~Bergmann, R.~H. Chan, R.~Hielscher, J.~Persch, and G.~Steidl.
\newblock Restoration of manifold-valued images by half-quadratic minimization.
\newblock {\em Inverse Problems and Imaging}, 10(2):281--304, 2016.

\bibitem{BFPS17}
R.~Bergmann, J.~H. Fitschen, J.~Persch, and G.~Steidl.
\newblock Infimal convolution coupling of first and second order differences on
  manifold-valued images.
\newblock In F.~Lauze, Y.~Dong, and A.~B. Dahl, editors, {\em Scale Space and
  Variational Methods in Computer Vision: 6th International Conference, SSVM
  2017, Kolding, Denmark, June 4-8, 2017, Proceedings}, pages 447--459.
  Springer International Publishing, Cham, 2017.

\bibitem{BLSW14}
R.~Bergmann, F.~Laus, G.~Steidl, and A.~Weinmann.
\newblock Second order differences of cyclic data and applications in
  variational denoising.
\newblock {\em SIAM Journal on Imaging Sciences}, 7(4):2916–2953, 2014.

\bibitem{BT17}
R.~Bergmann and D.~Tenbrinck.
\newblock A graph framework for manifold-valued data.
\newblock {\em arXiv Preprint 1702.05293}, 2017.

\bibitem{BW15}
R.~Bergmann and A.~Weinmann.
\newblock Inpainting of cyclic data using first and second order differences.
\newblock In {\em Energy Minimization Methods in Computer Vision and Pattern
  Recognition}, pages 155--168. Springer, 2015.

\bibitem{BW16}
R.~Bergmann and A.~Weinmann.
\newblock A second order {TV}-type approach for inpainting and denoising higher
  dimensional combined cyclic and vector space data.
\newblock {\em Journal of Mathematical Imaging and Vision}, 55(3):401--427,
  2016.

\bibitem{Bre14}
K.~Bredies.
\newblock Recovering piecewise smooth multichannel images by minimization of
  convex functionals with total generalized variation penalty.
\newblock In A.~Bruhn, T.~Pock, and X.-C. Tai, editors, {\em Efficient
  Algorithms for Global Optimization Methods in Computer Vision}, pages 44--77.
  Springer, 2014.

\bibitem{BH14}
K.~Bredies and M.~Holler.
\newblock Regularization of linear inverse problems with total generalized
  variation.
\newblock {\em Journal of Inverse and Ill-posed Problems}, 22(6):871--913,
  2014.

\bibitem{BHSW17}
K.~Bredies, M.~Holler, M.~Storath, and A.~Weinmann.
\newblock Total generalized variation for manifold-valued data.
\newblock {\em Preprint arXiv:1709.01616}, 2017.

\bibitem{BKP10}
K.~Bredies, K.~Kunisch, and T.~Pock.
\newblock Total generalized variation.
\newblock {\em SIAM Journal on Imaging Sciences}, 3(3):492--526, 2010.

\bibitem{BH15}
K.~Bredies and H.~P. Sun.
\newblock Preconditioned {D}ouglas--{R}achford algorithms for {TV}- and
  {TGV}-regularized variational imaging problems.
\newblock {\em Journal of Mathematical Imaging and Vision}, 52(3):317--344, Jul
  2015.

\bibitem{BV11}
K.~Bredies and T.~Valkonen.
\newblock Inverse problems with second-order total generalized variation
  constraints.
\newblock In {\em International Conference on Sampling Theory and
  Applications}, 2011.

\bibitem{BSS16}
M.~Burger, A.~Sawatzky, and G.~Steidl.
\newblock First order algorithms in variational image processing.
\newblock In R.~Glowinski, S.~Osher, and W.~Yin, editors, {\em Operator
  Splittings and Alternating Direction Methods}. Springer, 2016.

\bibitem{BRF00}
R.~B{\"u}rgmann, P.~A. Rosen, and E.~J. Fielding.
\newblock Synthetic aperture radar interferometry to measure
  earth{\textquoteright}s surface topography and its deformation.
\newblock {\em Annu. Rev. Earth Planet. Sci.}, 28(1):169--209, 2000.

\bibitem{CL97}
A.~Chambolle and P.-L. Lions.
\newblock Image recovery via total variation minimization and related problems.
\newblock {\em Numerische Mathematik}, 76(2):167--188, 1997.

\bibitem{CS13}
D.~Cremers and E.~Strekalovskiy.
\newblock Total cyclic variation and generalizations.
\newblock {\em Journal of Mathematical Imaging and Vision}, 47(3):258--277,
  2013.

\bibitem{Carmo92}
M.~P. do~Carmo.
\newblock {\em Riemannian Geometry}, volume 115.
\newblock Birkh\"auser, Basel, 1992.
\newblock Tranlated by F.~Flatherty.

\bibitem{EPS72}
J.~Ehlers, F.~A.~E. Pirani, and A.~Schild.
\newblock The geometry of free fall and light propagation.
\newblock In L.~O’Reifeartaigh, editor, {\em General Relativitiy}, pages
  63--84. Oxford University Press, 1972.

\bibitem{Fi17}
J.~H. Fitschen.
\newblock {\em Variational Models in Image Processing with Applications in the
  Materials Sciences}.
\newblock Dissertation, University of Kaiserslautern, 2017.
\newblock Similarily: Verlag Dr.~Hut, ISBN 978-3843932455, 2017.

\bibitem{FJ07}
P.~Fletcher and S.~Joshi.
\newblock {R}iemannian geometry for the statistical analysis of diffusion
  tensor data.
\newblock {\em Signal Processing}, 87:250--262, 2007.

\bibitem{GM76}
D.~Gabay and B.~Mercier.
\newblock A dual algorithm for the solution of nonlinear variational problems
  via finite element approximations.
\newblock {\em Computer and Mathematics with Applications}, 2:17--40, 1976.

\bibitem{GQ17}
J.~Gallier and J.~Quaintance.
\newblock Notes on {D}ifferential {G}eometry and {L}ie {G}roups, 2017.

\bibitem{GMS93}
M.~Giaquinta, G.~Modica, and J.~Sou{\v c}ek.
\newblock Variational problems for maps of bounded variation with values in
  {$S^1$}.
\newblock {\em Calculus of Variation}, 1(1):87--121, 1993.

\bibitem{GM06}
M.~Giaquinta and D.~Mucci.
\newblock The {BV}-energy of maps into a manifold: relaxation and density
  results.
\newblock {\em Ann. Sc. Norm. Super. Pisa Cl. Sci.}, 5(4):483--548, 2006.

\bibitem{GM07}
M.~Giaquinta and D.~Mucci.
\newblock Maps of bounded variation with values into a manifold: total
  variation and relaxed energy.
\newblock {\em Pure and Applied Mathematics Quarterly}, 3(2):513--538, 2007.

\bibitem{GM75}
R.~Glowinski and A.~Marroco.
\newblock Sur l'approximation, par \'el\'ements finis d'ordre un, et la
  r\'esolution, par p\'enalisation-dualit\'e d'une classe de probl\`emes de
  {D}irichlet non lin\'eaires.
\newblock {\em Revue fran\c{c}aise d'automatique, informatique, recherche
  op\'erationnelle. Analyse num\'erique}, 9(2):41--76, 1975.

\bibitem{Graef12}
M.~Gr{\"a}f.
\newblock A unified approach to scattered data approximation on
  {$\mathbb{S}^{3}$} and {SO}(3).
\newblock {\em Advances in Computational Mathematics}, 37(3):379--392, 2012.

\bibitem{GA10}
V.~K. Gupta and S.~R. Agnew.
\newblock A simple algorithm to eliminate ambiguities in ebsd orientation map
  visualization and analyses: Application to fatigue crack-tips/wakes in
  aluminum alloys.
\newblock {\em Microscopy and Microanalysis}, 16:831, 2010.

\bibitem{HK14}
M.~Holler and K.~Kunisch.
\newblock On infimal convolution of {TV}-type functionals and applications to
  video and image reconstruction.
\newblock {\em SIAM Journal on Imaging Sciences}, 7(4):2258--2300, 2014.

\bibitem{HR31}
H.~Hopf and W.~Rinow.
\newblock Ueber den begriff der vollst{\"a}ndigen differentialgeometrischen
  fl{\"a}che.
\newblock {\em Commentarii Mathematici Helvetici}, 3(1):209--225, Dec 1931.

\bibitem{HU17}
S.~Hosseini and A.~Uschmajew.
\newblock A {R}iemannian gradient sampling algorithm for nonsmooth optimization
  on manifolds.
\newblock {\em SIAM Journal on Optimization}, 27(1):173--189, 2017.

\bibitem{Jarre2000}
F.~Jarre.
\newblock Convex analysis on symmetric matrices.
\newblock In H.~Wolkowicz, R.~Saigal, and L.~Vandenberghe, editors, {\em
  Handbook of Semidefinite Programming}. Kluwer Academic Publishers, 2000.

\bibitem{KMN2000}
A.~Kheyfets, W.~A. Miller, and G.~A. Newton.
\newblock Schild's ladder parallel transport procedure for an arbitrary
  connection.
\newblock {\em International Journal of Theoretical Physics},
  39(12):2891--2898, Dec 2000.

\bibitem{LNPS17}
F.~Laus, M.~Nikolova, J.~Persch, and G.~Steidl.
\newblock A nonlocal denoising algorithm for manifold-valued images using
  second order statistics.
\newblock {\em SIAM Journal on Imaging Sciences}, 10(1):416–448, March 2017.

\bibitem{LSKC13}
J.~Lellmann, E.~Strekalovskiy, S.~Koetter, and D.~Cremers.
\newblock Total variation regularization for functions with values in a
  manifold.
\newblock In {\em IEEE ICCV 2013}, pages 2944--2951, 2013.

\bibitem{LP14}
G.~Li and T.~K. Pong.
\newblock Global convergence of splitting methods for nonconvex composite
  optimization.
\newblock {\em Preprint arXiv: 1407.0753}, 753, 2014.

\bibitem{LP14Sch}
M.~Lorenzi and X.~Pennec.
\newblock Efficient parallel transport of deformations in time series of
  images: From {S}child's to pole ladder.
\newblock {\em Journal of Mathematical Imaging and Vision}, 50(1):5--17, Sep
  2014.

\bibitem{Nas56}
J.~Nash.
\newblock The imbedding problem for {R}iemannian manifolds.
\newblock {\em Annals of Mathematics}, 63(1):20--63, 1956.

\bibitem{PS13}
K.~Papafitsoros and C.~B. Sch{\"o}nlieb.
\newblock A combined first and second order variational approach for image
  reconstruction.
\newblock {\em Journal of Mathematical Imaging and Vision}, 2(48):308--338,
  2014.

\bibitem{Pen18}
X.~Pennec.
\newblock Pole ladder: an exact scheme for parallel transport in symmetric
  spaces.
\newblock {\em In preparation}, 2018.

\bibitem{persch2018}
J.~Persch.
\newblock Optimization methods in manifold-valued image processing.
\newblock {\em PhD Thesis, TU Kaiserslautern}, 2018.

\bibitem{Rent11}
Q.~Rentmeesters.
\newblock A gradient method for geodesic data fitting on some symmetric
  {R}iemannian manifolds.
\newblock In {\em 50th IEEE Conference on Decision and Control and European
  Control Conference 2011}, pages 7141--7146, 2011.

\bibitem{Ro70}
R.~T. Rockafellar.
\newblock {\em Convex Analysis}.
\newblock Princeton University Press, 1970.

\bibitem{RTKB14}
G.~Rosman, X.-C. Tai, R.~Kimmel, and A.~M. Bruckstein.
\newblock Augmented-{L}agrangian regularization of matrix-valued maps.
\newblock {\em Methods and Applications of Analysis}, 21(1):121--138, 2014.

\bibitem{RWTKB14}
G.~Rosman, Y.~Wang, X.-C. Tai, R.~Kimmel, and A.~M. Bruckstein.
\newblock Fast regularization of matrix-valued images.
\newblock In {\em Efficient Algorithms for Global Optimization Methods in
  Computer Vision}, pages 19--43. Springer, 2014.

\bibitem{Rossmann03}
W.~Rossmann.
\newblock {\em Lie Groups}.
\newblock Oxford Science Publications, Oxford, 2003.

\bibitem{ROF92}
L.~I. Rudin, S.~Osher, and E.~Fatemi.
\newblock Nonlinear total variation based noise removal algorithms.
\newblock {\em Physica D: Nonlinear Phenomena}, 60(1):259--268, 1992.

\bibitem{Sas1958}
S.~Sasaki.
\newblock On the differential geometry of tangent bundles of {R}iemannian
  manifolds.
\newblock {\em Tohoku Mathematical Journal, Second Series}, 10(3):338--354,
  1958.

\bibitem{SS08}
S.~Setzer and G.~Steidl.
\newblock Variational methods with higher order derivatives in image
  processing.
\newblock In {\em Approximation XII: San Antonio 2007}, pages 360--385, 2008.

\bibitem{SST11}
S.~Setzer, G.~Steidl, and T.~Teuber.
\newblock Infimal convolution regularizations with discrete \(\ell_1\)-type
  functionals.
\newblock {\em Communications in Mathematical Sciences}, 9(3):797--827, 2011.

\bibitem{SSPB07}
G.~Steidl, S.~Setzer, B.~Popilka, and B.~Burgeth.
\newblock Restoration of matrix fields by second order cone programming.
\newblock {\em Computing}, 81:161--178, 2007.

\bibitem{SC11}
E.~Strekalovskiy and D.~Cremers.
\newblock Total variation for cyclic structures: convex relaxation and
  efficient minimization.
\newblock In {\em 2011 IEEE Conference on Computer Vision and Pattern
  Recognition (CVPR)}, pages 1905--1911, 2011.

\bibitem{SAK00}
S.~Sun, B.~Adams, and W.~King.
\newblock Observation of lattice curvature near the interface of a deformed
  aluminium bicrystal.
\newblock {\em Phil. Mag. A}, 80:9--25, 2000.

\bibitem{VBK13}
T.~Valkonen, K.~Bredies, and F.~Knoll.
\newblock Total generalized variation in diffusion tensor imaging.
\newblock {\em SIAM Journal on Imaging Sciences}, 6(1):487--525, 2013.

\bibitem{WYZ15}
Y.~Wang, W.~Yin, and J.~Zeng.
\newblock Global convergence of {ADMM} in nonconvex nonsmooth optimization.
\newblock {\em ArXiv preprint 1511.06324}, 2015.

\bibitem{WDS14}
A.~Weinmann, L.~Demaret, and M.~Storath.
\newblock Total variation regularization for manifold-valued data.
\newblock {\em SIAM Journal on Imaging Sciences}, 7(4):2226--2257, 2014.

\bibitem{Whi36}
H.~Whitney.
\newblock Differentiable manifolds.
\newblock {\em Annals of Mathematics}, 37(3):645--680, 1936.

\end{thebibliography}
%
%

\end{document}